\newcommand*\linenomathpatch{\@ifstar{\linenomathpatch@AMS}{\linenomathpatch@}}
\newcommand*\linenomathpatch@[1]{
	\expandafter\pretocmd\csname #1\endcsname {\linenomathWithnumbers}{}{}
	\expandafter\pretocmd\csname #1*\endcsname{\linenomathWithnumbers}{}{}
	\expandafter\apptocmd\csname end#1\endcsname {\endlinenomath}{}{}
	\expandafter\apptocmd\csname end#1*\endcsname{\endlinenomath}{}{}
}
\newcommand*\linenomathpatch@AMS[1]{
	\expandafter\pretocmd\csname #1\endcsname {\linenomathWithnumbersAMS}{}{}
	\expandafter\pretocmd\csname #1*\endcsname{\linenomathWithnumbersAMS}{}{}
	\expandafter\apptocmd\csname end#1\endcsname {\endlinenomath}{}{}
	\expandafter\apptocmd\csname end#1*\endcsname{\endlinenomath}{}{}
}
\let\linenomathWithnumbersAMS\linenomathWithnumbers
\patchcmd\linenomathWithnumbersAMS{\advance\postdisplaypenalty\linenopenalty}{}{}{}
\theoremstyle{plain} 
\newtheorem{thm}{Theorem}[section]
\newtheorem{prop}[thm]{Proposition}
\newtheorem{cor}[thm]{Corollary}
\newtheorem{lem}[thm]{Lemma}
\theoremstyle{definition}
\theoremstyle{remark}
\newcommand{\N}{\mathbb{N}}
\newcommand{\R}{\mathbb{R}}
\newcommand{\Z}{\mathbb{Z}}
\renewcommand{\P}{\mathbb{P}}
\newcommand{\E}{\mathbb{E}}
\newcommand{\1}[1]{\mathbf{1}_{#1}}
\newcommand{\half}{\frac{1}{2}}
\renewcommand{\tilde}{\widetilde}
\renewcommand{\bar}{\overline}
\newcommand{\hyphen}{\textrm{-}}
\newcommand{\as}{\textrm{a.s.}}
\renewcommand{\S}{\mathbb{S}}
\newcommand{\bP}{\mathbf{P}}
\newcommand{\bE}{\mathbf{E}}
\begin{document}
\title[Lipschitz-type estimate for the frog model]{Lipschitz-type estimate for the frog model with Bernoulli initial configuration}

\author[V.~H.~CAN]{Van Hao CAN}
\address[V.~H.~CAN]{Institute of Mathematics, Vietnam Academy of Science and Technology, 18 Hoang Quoc Viet, Cau Giay, Hanoi, Vietnam.}
\email{cvhao@math.ac.vn}

\author[N.~KUBOTA]{Naoki KUBOTA}
\address[N.~KUBOTA]{College of Science and Technology, Nihon University, Chiba 274-8501, Japan.}
\email{kubota.naoki08@nihon-u.ac.jp}

\author[S.~NAKAJIMA]{Shuta NAKAJIMA}
\address[S.~NAKAJIMA]{Graduate School of Science and Technology, Meiji University, Kanagawa 214-8571, Japan.}
\email{njima@meiji.ac.jp}

\keywords{Frog model, random environment, time constant, Lipschitz-type estimate, Russo's formula}
\subjclass[2010]{60K35; 26A15; 82B43}

\begin{abstract}
We consider the frog model with Bernoulli initial configuration, which is an interacting particle
system on the multidimensional lattice consisting of two states of particles: active and sleeping.
Active particles perform independent simple random walks.
On the other hand, although sleeping particles do not move at first, they become active and can move around when touched by active particles.
Initially, only the origin has one active particle, and the other sites have sleeping particles according to a Bernoulli distribution.
Then, starting from the original active particle, active ones are gradually generated and propagate across the lattice, with time.
It is of interest to know how the propagation of active particles behaves as the parameter of the Bernoulli distribution varies.
In this paper, we treat the so-called time constant describing the speed of propagation, and prove that the absolute difference between the time constants for parameters $p,q \in (0,1]$ is bounded from above and below by multiples of $|p-q|$. 
\end{abstract}

\maketitle

\section{Introduction}
In this paper, we consider the frog model with Bernoulli initial configuration on the $d$-dimensional lattice $\Z^d$ ($d \geq 2$).
This model consists of two types of particles: active and sleeping.
Active particles perform independent simple random walks on $\Z^d$.
On the other hand, although sleeping particles do not move at first, they become active and can move around when touched by active particles.
Initially, only the origin $0$ of $\Z^d$ has one active particle, and the sites of $\Z^d \setminus \{0\}$ have sleeping particles according to the Bernoulli distribution with some parameter.
Then, starting from the original active particle, active ones are gradually generated and propagate across $\Z^d$, with time.
Our main object of interest is the so-called time constant, which describes the speed of propagation of active frogs.
The time constant is a function of the parameter of the Bernoulli distribution, and this paper aims to investigate how the change in the parameter of the Bernoulli distribution affects the time constant.
In particular, we prove that the absolute difference between the time constants for parameters $p,q \in (0,1]$ is bounded from above and below by multiples of $|p-q|$.

\subsection{The model}\label{subsect:model}
Fix $d \geq 2$, and denote by $\mathcal{P}$ the set of all probability measures on $\N_0:=\N \cup \{0\}$ not concentrated in zero.
For a given $\Phi \in \mathcal{P}$, we consider a family $\omega=(\omega(x))_{x \in \Z^d}$ of independent random variables with the common law $\Phi$.
Furthermore, independently of $\omega$, let $\S=((S_k(x,\ell))_{k=0}^\infty)_{x \in \Z^d,\ell \in \N}$ be a family of independent simple random walks on $\Z^d$ satisfying that $S_0(x,\ell)=x$ for $x \in \Z^d$ and $\ell \in \N$.
For every $x,y \in \Z^d$, we now define the \emph{first passage time} $T(x,y)=T(x,y,\omega,\S)$ from $x$ to $y$ as
\begin{align*}
	T(x,y):=\inf\mleft\{ \sum_{i=0}^{m-1} \tau(x_i,x_{i+1}):
	\begin{minipage}{13.5em}
		$m \geq 1$ and $x_0,x_1,\dots,x_m \in \Z^d$\\
		with $x_0=x$ and $x_m=y$
	\end{minipage}
	\mright\},
\end{align*}
where
\begin{align*}
	\tau(x_i,x_{i+1})
	&= \tau\bigl( x_i,x_{i+1},\omega(x_i),S_\cdot(x_i,\cdot) \bigr)\\
	&:= \inf\bigl\{ k \geq 0:S_k(x_i,\ell)=x_{i+1} \text{ for some } \ell \in [1,\omega(x_i)] \bigr\},
\end{align*}
with the convention that $\tau(x_i,x_{i+1}):=\infty$ if $\omega(x_i)=0$.
Note that the first passage time satisfies the triangle inequality:
\begin{align}\label{eq:triangle}
	T(x,z) \leq T(x,y)+T(y,z),\qquad  x,y,z \in \Z^d.
\end{align}

On the event $\{ \omega(0) \geq 1 \}$, the first passage time $T(0,y)$ is intuitively interpreted as follows:
First, we place ``frogs'' on $\Z^d$ according to the initial configuration $\omega$, i.e., $\omega(x)$ frogs sit on each site $x$ (there is no frog at $x$ if $\omega(x)=0$).
In particular, the event $\{ \omega(0) \geq 1 \}$ guarantees that we assign at least one frog to the origin $0$.
The behavior of the $\ell$-th frog sitting on a site $x$ is controlled by the simple random walk $S_\cdot(x,\ell)$, but not all frogs move around from the beginning.
At first, the only frogs sitting on $0$ are active and perform simple random walks, whereas the other frogs are sleeping and do not move.
Each sleeping frog becomes active and starts to perform a simple random walk once it is touched by an original active frog.
When we repeat this procedure for the remaining sleeping and upcoming active frogs, $T(0,y)$ represents the minimum time at which an active frog reaches $y$.

Alves et al.~\cite[Section~2]{AlvMacPopRav01} proved that for a given $\Phi \in \mathcal{P}$, there exists a norm $\mu(\cdot)=\mu_\Phi(\cdot)$ on $\R^d$ (which is called the \emph{time constant}) such that almost surely on the event $\{ \omega(0) \geq 1 \}$,
\begin{align*}
	\lim_{n \to \infty} \frac{1}{n}T(0,nx)=\mu(x),\qquad x \in \Z^d.
\end{align*}
Furthermore, $\mu(\cdot)$ is invariant under permutations of the coordinates and under reflections in the coordinate hyperplanes, and satisfies
\begin{align}\label{eq:tc_bound}
	\|x\| \leq \mu(x) \leq \mu(\xi_1)\|x\|,\qquad x \in \R^d,
\end{align}
where $\|\cdot\|$ is the $\ell^1$-norm on $\R^d$ and $\xi_1$ is the first coordinate vector of $\R^d$.
As a consequence, Alves et al.~\cite[Theorem~{1.1}]{AlvMacPopRav01} also studied the asymptotic behavior of the random set
\begin{align*}
	\mathcal{B}(t):=\{x \in \Z^d:T(0,x) \leq t\},\qquad t \geq 0,
\end{align*}
which is the set of all sites visited by active frogs up to time $t$.
More precisely, for all $\epsilon>0$, the following holds almost surely on the event $\{ \omega(0) \geq 1 \}$:
For all large $t$,
\begin{align*}
	(1-\epsilon)t\mathcal{B} \cap \Z^d \subset \mathcal{B}(t) \subset (1+\epsilon)t\mathcal{B} \cap \Z^d,
\end{align*}
where $\mathcal{B}=\mathcal{B}_\Phi:=\{ x \in \R^d:\mu_\Phi(x) \leq 1 \}$, the \emph{asymptotic shape}.
This result is called the \emph{shape theorem} for the frog model, and it is clear from the properties of the time constant that the asymptotic shape $\mathcal{B}$ is a compact, convex, symmetric set with nonempty interior.

\subsection{Main result}\label{subsect:main}
Let $0<r \leq 1$ and denote by $\mathrm{Ber}(r)$ the Bernoulli distribution with parameter $r$.
We now consider the initial configuration $\omega$ governed by $\mathrm{Ber}(r)$:
\begin{align*}
	\P_r(\omega(x)=1)=1-\P_r(\omega(x)=0)=r,\qquad x \in \Z^d.
\end{align*}
In this setting, at most one frog is assigned to each site of $\Z^d$.
Hence, for abbreviation, write
\begin{align*}
	\S=(S_\cdot(x,1))_{x \in \Z^d}=(S_\cdot(x))_{x \in \Z^d}.
\end{align*}
For each $x \in \Z^d$, $P^x$ stands for the law of the simple random walk $S_\cdot(x)$ starting at $x$, and the product measure $P:=\prod_{x \in \Z^d}P^x$ is regarded as the law of frogs $\S$.
Then, writing $\bP_r:=\P_r \times P$ and $\mu_r(\cdot):=\mu_{\mathrm{Ber}(r)}(\cdot)$, we formulate the time constant and the asymptotic shape in the Bernoulli setting as follows:
$\mathbf{P}_r \hyphen \as$ on the event $\{ \omega(0)=1 \}$,
\begin{align}\label{eq:timeconst}
	\lim_{n \to \infty} \frac{1}{n}T(0,nx)=\mu_r(x),\qquad x \in \Z^d.
\end{align}

From \cite[Lemma~{2.2}]{Kub20}, $\mu_r(x)$ is decreasing in the parameter $r$ for every $x \in \R^d$.
This combined with \eqref{eq:tc_bound} implies that if $0<p<q \leq 1$, then for all $x \in \R^d \setminus \{0\}$,
\begin{align*}
	0 \leq \frac{\mu_p(x)-\mu_q(x)}{\|x\|} \leq \mu_p(\xi_1)-1.
\end{align*}
The aim of this paper is to give more precise upper and lower bounds for the above fraction, and the following theorem is our main result.

\begin{thm}\label{thm:Lipschitz}
Let $0<r_0<1$.
Then, there exist constants $A_1,A_2>0$ (which depend only on $d$ and $r_0$) such that if $r_0 \leq p<q \leq 1$, then for all $x \in \R^d \setminus \{0\}$,
\begin{align*}
	A_1(q-p) \leq \frac{\mu_p(x)-\mu_q(x)}{\|x\|} \leq A_2(q-p).
\end{align*}
In particular, for a fixed $x \in \R^d \setminus \{0\}$, the time constant $\mu_r(x)$ is strictly increasing in $r$ and Lipschitz continuous on every closed interval in $(0,1]$.
\end{thm}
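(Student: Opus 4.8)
The plan is to prove the two bounds separately, using a coupling of the Bernoulli environments together with a differentiation-in-$p$ (Russo-type) argument. Fix $r_0 \le p < q \le 1$. The natural coupling is to realize all environments $\omega_r$ for $r \in [r_0,1]$ on a common probability space via i.i.d. uniform$(0,1)$ variables $(U_x)_{x \in \Z^d}$, setting $\omega_r(x) = \mathbf 1\{U_x \le r\}$, and to keep the same family of random walks $\S$ for every value of $r$. With this coupling $\omega_p(x) \le \omega_q(x)$ pointwise, so every path admissible for the $p$-environment is admissible for the $q$-environment with the same passage cost; hence $T_q(0,y) \le T_p(0,y)$ for all $y$, which re-derives monotonicity and gives the lower bound $\mu_p(x) \ge \mu_q(x)$ for free. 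The content is the quantitative two-sided comparison.

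\textbf{Upper bound.} I would bound $\mu_p(x) - \mu_q(x)$ by controlling $\E_p[T(0,y)] - \E_q[T(0,y)]$ for $y = nx$ and letting $n \to \infty$ (using that $n^{-1}T(0,nx) \to \mu_r(x)$ in $L^1$, which follows from the shape theorem together with the integrability estimates in \cite{AlvMacPopRav01}; if needed this is where a preliminary moment bound is invoked). Writing $f(r) := \E_r[T(0,y)]$, the idea is a telescoping/Russo's formula computation: as $r$ increases, the derivative $f'(r)$ is (up to sign) a sum over sites $z$ of the expected decrease in $T(0,y)$ caused by switching $\omega(z)$ from $0$ to $1$, i.e. $-f'(r) = \sum_z \P_r(\text{switching } z \text{ is pivotal}) \cdot \E_r[\text{gain at } z \mid \cdots]$. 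The key estimate is that this quantity is $O(\|y\|)$ uniformly in $r \in [r_0,1]$: only sites within the ``relevant region'' (sites $z$ with $T(0,z) + T(z,y)$ not much larger than $T(0,y)$) can be pivotal, and by the shape theorem this region has size $O(\|y\|^d)$ with a pivotality probability that decays fast enough, while each individual gain from activating one extra frog has a uniformly bounded expectation (a single extra random walk can shortcut a path by only a stochastically bounded amount, using $r \ge r_0$ to guarantee enough surrounding frogs to propagate). Integrating $f'$ from $p$ to $q$ then yields $f(p) - f(q) \le C\|y\|(q-p)$, hence $\mu_p(x) - \mu_q(x) \le C\|x\|(q-p)$ after dividing by $n$ and passing to the limit.

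\textbf{Lower bound.} Here I would show that activating the frogs makes a genuinely positive difference at a linear rate. Using the same coupling, on the event that all sites in a box of side $\epsilon n$ around a fixed fraction of the optimal $q$-path have $U_z \in (p,q)$ — an event of probability $\ge (q-p)^{c n^d}$, which is too small directly — so instead one localizes: I would instead argue that the expected geodesic for the $q$-environment must, with probability bounded below, pass near a site $z$ with $U_z \in (p,q)$ whose frog is actually used, and that deleting that frog (which is what the $p$-environment does) costs at least a constant in expectation per unit length. Concretely, partition the straight segment from $0$ to $nx$ into $\Theta(n)$ sub-blocks; in each sub-block the probability that the $q$-geodesic uses a frog absent from the $p$-environment is bounded below uniformly (this uses that with positive probability the local optimal strategy in the $q$-environment strictly beats any $p$-admissible local strategy, a finite-volume statement provable by an explicit comparison of a one-step jump versus a detour), and these contributions are roughly additive along the path by the triangle inequality \eqref{eq:triangle}, giving $\E_p[T(0,nx)] - \E_q[T(0,nx)] \ge c n \|x\| (q-p)$. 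Dividing by $n$ and sending $n\to\infty$ gives $\mu_p(x) - \mu_q(x) \ge A_1 \|x\|(q-p)$.

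\textbf{Main obstacle.} The hard part will be the upper bound's control of the total pivotality contribution: one must show that the sum over $z$ of $\P_r(z \text{ pivotal}) \cdot \E_r[\text{gain}_z]$ grows only linearly in $\|y\|$, not polynomially. This requires (i) a large-deviation estimate confining pivotal sites to an $O(\|y\|)$-thick neighborhood of the geodesic — i.e. an upper-tail bound $\P_r(T(0,z)+T(z,y) \le T(0,y) + s)$ decaying in $s$, uniformly in $r \ge r_0$ — and (ii) a uniform bound on the per-site gain, which is delicate because activating one frog could in principle trigger a cascade; one controls this by comparing with the frog model restricted to the single added walk and using that its range grows only diffusively. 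Making Russo's formula rigorous in this non-monotone, unbounded ($T$ is not a function of finitely many coordinates a priori) setting — via a truncation of $T$ to paths inside a large box and a limiting argument — is the other technical point. Once these uniform-in-$r$ estimates are in hand, the Lipschitz statement of the theorem follows immediately: $r \mapsto \mu_r(x)$ has difference quotients trapped between $A_1\|x\|$ and $A_2\|x\|$ on $[r_0,1]$, so it is strictly increasing, and since $r_0 \in (0,1)$ is arbitrary it is Lipschitz on every closed subinterval of $(0,1]$.
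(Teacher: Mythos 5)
Your overall frame for the upper bound is essentially the paper's: truncate $T$ so it depends on finitely many coordinates, apply Russo's formula, and show the sum over sites of ``influences'' is $O(\|y\|)$ uniformly in $r\in[r_0,1]$. The paper makes this precise by bounding the gain at a site $z$ by the detour time $T_z^0(\underline z,\overline z)$ between the neighbours of $z$ on a fixed optimal sequence $\gamma_\mathcal{O}(y)$ (so pivotal sites are automatically confined to a path of linearly many points), and by controlling the exceptional long detours with an $L$-dependent concentration inequality; your sketch (pivotal sites in an $O(\|y\|^d)$ region with fast-decaying pivotality, plus a bounded per-site gain) is vaguer but is the same route in spirit and could be completed along those lines.

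The genuine gap is in the lower bound. First, the additivity step is not justified: the triangle inequality \eqref{eq:triangle} gives subadditivity of passage times, so it cannot be used to add up block-wise delays into a lower bound for $T_p(0,nx)-T_q(0,nx)$; lower-bounding a passage-time difference by local contributions is exactly the hard part, and the paper does it by keeping Russo's formula (so the difference is an integral of a sum of nonnegative influences) and lower-bounding the number of pivotal sites. Second, and more importantly, ``the $q$-geodesic uses a frog absent in the $p$-environment'' does not imply any delay upon deleting it — the frog must lie on \emph{every} optimal sequence — and making a used frog pivotal with probability bounded below requires the key quantitative input that you do not supply: even at $r=1$, the passage time from a point to the $\ell^1$-sphere of radius $L$ is \emph{strictly} larger than $L$ with probability $1-e^{-cL}$ (the paper's Proposition~\ref{prop:Nakajima}, proved by a nontrivial multiplicative induction $q_{An}\le 2^{-1}(2q_A)^n$ with an explicit base-case computation $2q_3<1$). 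Only with this can one resample the frog at $z$ to run straight to a nearby point of the geodesic and conclude that $z$ is indispensable, yielding $T_z^0-T_z^1\ge 1$ for a linear number of sites. Your ``explicit comparison of a one-step jump versus a detour'' does not provide this, and the issue is not cosmetic: as the paper notes, for heavy-tailed initial configurations the time constant equals the $\ell^1$-norm and the lower bound fails entirely, so the Bernoulli-specific fact that the model is locally strictly slower than $\ell^1$ is precisely where the real work lies.
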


The above Lipschitz-type estimate for the time constant is inherited to the asymptotic shape.
To explain this precisely, we introduce the Hausdorff distance between two subset of $\R^d$:
for any $\Gamma,\Gamma' \subset \R^d$,
\begin{align*}
	d_\mathcal{H}(A,B):=\inf\bigl\{ \delta>0:A \subset N_{\delta}(B) \text{ and } B \subset N_{\delta}(A) \bigr\},
\end{align*}
where for any $\Gamma \subset \R^d$,
\begin{align*}
	N_\delta(\Gamma):=\bigl\{ x \in \R^d:\|x-y\| \leq \delta \text{ for some } y \in \Gamma \bigr\}.
\end{align*}
Moreover, for any $r \in (0,1]$, set $\mathcal{B}_r:=\mathcal{B}_{\mathrm{Ber}(r)}$, which is the asymptotic shape for the parameter $r$ of the Bernoulli distribution.
Then, as a consequence of Theorem~\ref{thm:Lipschitz}, the next corollary gives the difference between two asymptotic shapes in the Hausdorff distance.

\begin{cor}\label{cor:Lipschitz_shape}
Let $0<r_0<1$.
Then, there exist constants $A'_1,A'_2>0$ (which depend only on $d$ and $r_0$) such that if $r_0 \leq p<q \leq 1$, then
\begin{align*}
	A'_1(q-p) \leq d_\mathcal{H}(\mathcal{B}_p,\mathcal{B}_q) \leq A'_2(q-p).
\end{align*}
In particular, the asymptotic shape $\mathcal{B}_r$ is strictly increasing in $r$.
\end{cor}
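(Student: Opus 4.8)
The plan is to transport the radial comparison of Theorem~\ref{thm:Lipschitz} from the norms $\mu_r$ to their unit balls $\mathcal{B}_r=\{x\in\R^d:\mu_r(x)\le1\}$. Since $\mu_r(x)$ is decreasing in $r$, for $r_0\le p<q\le1$ one has $\mathcal{B}_p\subset\mathcal{B}_q$, so in the definition of the Hausdorff distance the inclusion $\mathcal{B}_p\subset N_\delta(\mathcal{B}_q)$ holds trivially and, using that $\mathcal{B}_p$ is compact,
\begin{align*}
	d_\mathcal{H}(\mathcal{B}_p,\mathcal{B}_q)=\inf\bigl\{\delta>0:\mathcal{B}_q\subset N_\delta(\mathcal{B}_p)\bigr\}=\sup_{y\in\mathcal{B}_q}\,\inf_{z\in\mathcal{B}_p}\|y-z\|.
\end{align*}
Throughout I would fix $c_0:=\mu_{r_0}(\xi_1)\in(0,\infty)$, which depends only on $d$ and $r_0$, and record that $\|x\|\le\mu_p(x)\le c_0\|x\|$ (and likewise with $q$ in place of $p$) for all $x\in\R^d$, by \eqref{eq:tc_bound} together with the monotonicity of $\mu_r$ in $r$.

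For the upper bound, fix $y\in\mathcal{B}_q$. If $\mu_p(y)\le1$, then $y\in\mathcal{B}_p$ and its distance to $\mathcal{B}_p$ is $0$. Otherwise $y/\mu_p(y)\in\mathcal{B}_p$, so
\begin{align*}
	\inf_{z\in\mathcal{B}_p}\|y-z\|\le\Bigl\|y-\frac{y}{\mu_p(y)}\Bigr\|=\|y\|\bigl(1-\mu_p(y)^{-1}\bigr)\le\|y\|\bigl(\mu_p(y)-1\bigr),
\end{align*}
the last step using $\mu_p(y)\ge1$. Since $\mu_q(y)\le1$, Theorem~\ref{thm:Lipschitz} gives $\mu_p(y)-1\le\mu_p(y)-\mu_q(y)\le A_2(q-p)\|y\|$, while \eqref{eq:tc_bound} gives $\|y\|\le\mu_q(y)\le1$; combining these, $\inf_{z\in\mathcal{B}_p}\|y-z\|\le A_2(q-p)$, so $A'_2:=A_2$ works.

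For the lower bound, pick any $y$ with $\mu_q(y)=1$, for instance $y=\xi_1/\mu_q(\xi_1)$; then $y\in\mathcal{B}_q$, hence $d_\mathcal{H}(\mathcal{B}_p,\mathcal{B}_q)\ge\inf_{z\in\mathcal{B}_p}\|y-z\|$. For any $z\in\mathcal{B}_p$ the triangle inequality for the norm $\mu_p$ and \eqref{eq:tc_bound} give $\mu_p(y)\le\mu_p(z)+\mu_p(y-z)\le1+c_0\|y-z\|$, so
\begin{align*}
	\|y-z\|\ge\frac{\mu_p(y)-1}{c_0}=\frac{\mu_p(y)-\mu_q(y)}{c_0}\ge\frac{A_1(q-p)\|y\|}{c_0}\ge\frac{A_1(q-p)}{c_0^2},
\end{align*}
where the final step uses $\|y\|\ge\mu_q(y)/\mu_q(\xi_1)\ge1/c_0$ from \eqref{eq:tc_bound}. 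Taking the infimum over $z$ gives the lower bound with $A'_1:=A_1/c_0^2$. Finally, for arbitrary $0<p<q\le1$, applying this with any $r_0\in(0,p)$ yields $d_\mathcal{H}(\mathcal{B}_p,\mathcal{B}_q)>0$, which together with $\mathcal{B}_p\subset\mathcal{B}_q$ gives the strict inclusion $\mathcal{B}_p\subsetneq\mathcal{B}_q$.

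I do not expect a real obstacle here: once one notices that $\mathcal{B}_p\subset\mathcal{B}_q$ collapses the Hausdorff distance to the one-sided quantity $\sup_{y\in\mathcal{B}_q}\inf_{z\in\mathcal{B}_p}\|y-z\|$, everything follows directly from Theorem~\ref{thm:Lipschitz}. The only points needing care are bookkeeping ones — applying $1-\mu_p(y)^{-1}\le\mu_p(y)-1$ (valid since $\mu_p(y)\ge1$) and the comparisons $\mu_p(y-z)\le c_0\|y-z\|$ and $\|y\|\ge1/c_0$ on the correct sides of the inequalities — and verifying that $c_0$, hence $A'_1$ and $A'_2$, depends only on $d$ and $r_0$.
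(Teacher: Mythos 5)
Your proof is correct and follows essentially the same route as the paper: reduce to the one-sided inclusion $\mathcal{B}_q\subset N_\delta(\mathcal{B}_p)$ via the monotonicity $\mathcal{B}_p\subset\mathcal{B}_q$, get the upper bound by radially projecting boundary points of $\mathcal{B}_q$ onto $\partial\mathcal{B}_p$, and get the lower bound from Theorem~\ref{thm:Lipschitz} combined with the norm comparison \eqref{eq:tc_bound} through the constant $\mu_{r_0}(\xi_1)$. The only cosmetic difference is that you bound the distance from a single point of $\partial\mathcal{B}_q$ to $\mathcal{B}_p$ via the triangle inequality for $\mu_p$, whereas the paper shows that a $\delta_-$-neighborhood of $\partial\mathcal{B}_p$ lies strictly inside $\mathcal{B}_q$; the two are equivalent and your constants depend only on $d$ and $r_0$ as required.
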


Let us finally comment on earlier works related to the above results.
In the frog model on $\Z^d$ ($d \geq 1$), \cite{AlvMacPop02} is the first published article investigating behaviors of the first passage time and the set of sites visited by active frogs.
That article treated the one-frog-per-site configuration and proved that the first passage time and the set of sites visited by active frogs grow linearly, with time.
In \cite{AlvMacPopRav01}, this result was extended to the case where the initial configuration of frogs is random, and we obtained the time constant and the asymptotic shape depending on the law of initial configuration of frogs, as stated at the end of Section~\ref{subsect:model}.
Although the aforementioned articles consider discrete-time simple random walks as frogs, Ram\'{\i}rez--Sidoravicius~\cite{RamSid04} independently derived the time constant and the asymptotic shape in the continuous-time frog model with the one-frog-per-site configuration (which consists of continuous-time simple random walks).

The propagation of active frogs has recently been investigated more closely in both discrete- and continuous-time settings, and we can find results on fluctuations and large deviations as follows:
In the discrete-time frog model on $\Z^d$ ($d \geq 2$) with random initial configuration, \cite{Kub19} provides some large deviation bounds for the first passage time.
More precisely, it proves that for any $\epsilon>0$, the conditional probabilities of $\{ T(0,x) \geq (1+\epsilon)\mu(x) \}$ and $\{ T(0,x) \leq (1-\epsilon)\mu(x) \}$ given $\{ \omega(0) \geq 1 \}$ decay at least stretched exponentially as $\|x\| \to \infty$.
Furthermore, if we consider the one-frog-per-site initial configuration, then \cite{CanNak19} gives a sublinear variance bound for the first passage time, i.e., for all $x \in \Z^d \setminus \{0\}$, the variance of $T(0,x)$ is bounded from above by a multiple of $\|x\|/\log\|x\|$.
On the other hand, in the continuous-time frog model on $\Z$, \cite{BerRam10} and \cite{ComQuaRam07,ComQuaRam09} prove the large deviation principle and the central limit theorem for the rightmost site visited by active frogs, respectively (We remark that these articles adopt a slightly different initial configuration from our frog model).

Although the aforementioned articles fix the law of initial configuration, it is also of interest to analyze the behavior of the time constant as the law of the initial configuration varies.
However, there are not a lot of results on this topic yet:
In \cite[Corollary~9]{JohJun18}, Johnsona and Junge introduced a (nontrivial) stochastic order for the initial configuration, and proved that the asymptotic shape is increasing in that stochastic order.
In \cite{Kub20}, the second author showed continuities for the time constant and the asymptotic shape in the law of the initial configuration.
Note that Theorem~\ref{thm:Lipschitz} and Corollary~\ref{cor:Lipschitz_shape} strength the continuities as long as the initial configuration is Bernoulli-distributed.
We may be able to generalize these results to other classes of distributions.
However, \cite[Theorem~{1.2}]{AlvMacPopRav01} shows that there exists a class of distributions such that we cannot obtain similar lower bounds to those in Theorem~\ref{thm:Lipschitz} and Corollary~\ref{cor:Lipschitz_shape} as follows:
Suppose that $\Phi \in \mathcal{P}$ satisfies
\begin{align}\label{eq:heavytail}
	\Phi([t,\infty)) \geq (\log t)^{-\delta} \quad \text{for some $\delta \in (0,d)$ and for all large $t$}.
\end{align}
Then, $\mu_\Phi(\cdot)$ and $\mathcal{B}_\Phi$ coincide with the $\ell^1$-norm on $\R^d$ and the closed $\ell^1$-unit ball, respectively.
This implies that if $\Phi,\Psi \in \mathcal{P}$ satisfy \eqref{eq:heavytail}, then $\mu_\Phi(x)-\mu_\Psi(x)=0$ for all $x \in \R^d$ and $d_\mathcal{H}(\mathcal{B}_\Phi,\mathcal{B}_\Psi)=0$, and lower bounds as in Theorem~\ref{thm:Lipschitz} and Corollary~\ref{cor:Lipschitz_shape} never hold.
In this way, there is a possibility that the propagation of active frogs exhibits unusual behavior if the initial configuration has a heavy-tailed distribution.
In fact, \cite[Proposition~{1.4}]{Kub20} exhibits that \eqref{eq:heavytail} can collapse the continuity of the time constant in the law of the initial configuration.
Moreover, \cite{BezDPerKru21} and \cite{BezDPerKuc24} show that in the continuous-time frog model on $\Z$ with random initial configuration, the set of sites visited by active frogs becomes infinite in a finite time if the law of initial configuration satisfies a heavy-tail condition weaker than \eqref{eq:heavytail}.

\subsection{Organization of the paper}
Let us describe how the present article is organized.
In Section~\ref{sect:prelim}, we prepare some estimates for the first passage time which are used throughout the paper.
Furthermore, our proofs are often done by constructing a finitely dependent random variables and estimating the upper tail of their sum.
To do this, the result obtained by Can and Nakajima~\cite[Lemma~{2.6}]{CanNak19} is useful, and we state their result as Proposition~\ref{prop:CN} below for the convenience of the reader.

Section~\ref{sect:pf_main} is devoted to the proof of Theorem~\ref{thm:Lipschitz} and Corollary~\ref{cor:Lipschitz_shape}.
The main tool is Russo's formula (see for instance \cite[Theorem~{2.32}]{Gri99_book}) for the first passage time.
This tells us that Theorem~\ref{thm:Lipschitz} follows by estimating the influence of the absence of one frog on the propagation of active frogs.
Although Propositions~\ref{prop:influ_upper} and \ref{prop:influ_lower} below play a role in estimating this effect, we use these propositions without their proofs and complete the proofs of Theorem~\ref{thm:Lipschitz} and Corollary~\ref{cor:Lipschitz_shape} for now.

Section~\ref{sect:influence} deals with the proofs of Propositions~\ref{prop:influ_upper} and \ref{prop:influ_lower}, which provide upper and lower bounds for the influence of the absence of one frog on the propagation of active frogs, respectively.
In particular, to prove Proposition~\ref{prop:influ_lower}, we count the number of frogs which contribute but delay the propagation of active frogs.
Proposition~\ref{prop:delay} estimates that number of frogs from below.
Its proof is postponed into Section~\ref{sect:delay} since we need some more work.

In Section~\ref{sect:delay}, we prove Proposition~\ref{prop:delay}.
The key to proving Proposition~\ref{prop:influ_lower} is to observe that the first passage time is strictly greater than the $\ell^1$-norm with high probability.
In \cite[Lemma~{5.2}]{Kub20}, this behavior of the first passage time has been already observed under the law $\P_r$ with a sufficiently small parameter $r$.
However, The result obtained in \cite[Lemma~{5.2}]{Kub20} is not enough to prove Proposition~\ref{prop:delay} since $r$ is not necessarily small in the present paper.
Hence, we first solve this problem in Section~\ref{subsect:Nakajima}, and next show Proposition~\ref{prop:delay} in Section~\ref{subsect:pf_delay}.

We close this section with some general notation.
Let $r_0$ be a fixed parameter in $(0,1)$, and this information is dropped from all statements below.
Denote by $E$, $\E_r$ and $\bE_r$ the expectation associated to the laws $\P_r$, $P$ and $\bP_r$ stated in Section~\ref{subsect:main}, respectively.
The $\ell^1$-norm on $\R^d$ is designated by $\|\cdot\|$, and set $B(x,R):=\{ y \in \Z^d:\|y-x\| \leq R \}$ for $x \in \Z^d$ and $R \geq 0$.
Moreover, for each $i \in [1,d]$, write $\xi_i$ for the $i$-th coordinate vector of $\R^d$.
Finally, throughout the paper, we use $c$ and $c'$ to denote arbitrary positive constants, which may change from line to line.

\section{Preliminaries}\label{sect:prelim}
In this section, we prepare some notation and lemmata which are often used throughout this paper.
First of all, to make our statements simple, let us introduce a class of rapidly decreasing functions on $[0,\infty)$.
Denote by $\mathcal{K}(d,r_0)$ the set of all functions $\phi$ on $[0,\infty)$ with the form
\begin{align*}
	\phi(t)=ce^{-c't^\alpha},\qquad t \geq 0,
\end{align*}
where $c$, $c'$ and $\alpha$ are some positive constants depending only on $d$ and $r_0$.
Each element of $\mathcal{K}(d,r_0)$ is called the stretched exponential (or the Kohlrausch--Williams--Watts) function.
Note that every $\phi \in \mathcal{K}(d,r_0)$ decays faster than all positive power functions, i.e., $\lim_{t \to \infty} t^\beta \phi(t)=0$ holds for all $\beta>0$.

We next state some upper tail estimates for the first passage time.
For any $\omega \in \{ 0,1 \}^{\Z^d}$, $z \in \Z^d$ and $s \in \{ 0,1 \}$, let $\omega_z^s$ be the initial configuration $\omega$ with $\omega(z)$ forced to take the value $s$, i.e.,
\begin{align*}
	\omega_z^s(x):=
	\begin{cases}
		s, & \text{if } x=z,\\
		\omega(x), & \text{if } x \not= z.
	\end{cases}
\end{align*}
In particular, set $\langle \omega \rangle:=\omega_0^1$ for any $\omega \in \{ 0,1 \}^{\Z^d}$.
Moreover, define for $x,y \in \Z^d$,
\begin{align*}
	T_z^s(x,y):=T(x,y,\omega_z^s,\S),
\end{align*}
which is the first passage time from $x$ to $y$ in the initial configuration $\omega_z^s$.
The next lemma says that we can control the tail probability of the first passage time uniformly in the parameter $r \in [r_0,1]$ and the mandatory absence of frogs.

\begin{lem}\label{lem:AP}
There exist $\phi_0 \in \mathcal{K}(d,r_0)$ and a constant $C_0 \geq 1$ (which depends only on $d$ and $r_0$) such that for all $y \in \Z^d$ and $t \geq C_0\|y\|$,
\begin{align*}
	\sup_{\substack{r_0 \leq r \leq 1\\ z \in \Z^d \setminus \{0\}}} \bP_r\mleft( T_z^0(0,y) \geq t \middle| \omega(0)=1 \mright)
	\leq \phi_0(t).
\end{align*}
In particular, for any $\beta>0$,
\begin{align*}
	\sup_{\substack{r_0 \leq r \leq 1\\ y,z \in \Z^d \setminus \{0\}}} \frac{1}{\|y\|^{\beta}}\,\bE_r\mleft[ T_z^0(0,y)^\beta \middle| \omega(0)=1 \mright]
	<\infty.
\end{align*}
\end{lem}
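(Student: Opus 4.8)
The plan is to establish the stretched-exponential tail bound by a block/coarse-graining argument, and then deduce the moment bound by integration. First I would reduce the problem to a path-counting estimate: on the event $\{\omega(0)=1\}$, the origin always carries an active frog (even after forcing $\omega(z)=0$, since $z\neq 0$), so $T_z^0(0,y)$ is a genuine first passage time between $0$ and $y$ that only \emph{increases} when we delete the frog at $z$. Hence $T_z^0(0,y)\geq t$ forces every $\omega$-admissible path from $0$ to $y$ avoiding the frog at $z$ to have total crossing time at least $t$; the idea is to show that typically there are ``enough'' other frogs along a short route that this is very unlikely once $t\geq C_0\|y\|$. Concretely, I would partition $\Z^d$ into disjoint boxes of a large fixed side-length $L=L(d,r_0)$, call a box \emph{good} if it contains a frog (under $\omega$) in a prescribed central sub-box, and note that goodness of distinct boxes is independent with probability at least $1-(1-r_0)^{cL^d}$, which we can make as close to $1$ as we like by taking $L$ large. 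Forcing $\omega(z)=0$ spoils at most one box, so this changes nothing at the coarse-grained level.

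The second step is to run a contour/greedy argument at the level of boxes: from the geometry of $\Z^d$ one can find a lattice path from $0$ to $y$ passing through $O(\|y\|/L)$ boxes, and if all these boxes are good then an active frog can traverse the path in time $O(\|y\|)$ with overwhelming probability, because within each good box an active frog reaches the central sub-box of the next box in a time with exponential tails (a standard hitting-time estimate for simple random walk inside a box, or via Proposition~\ref{prop:CN} applied to a suitable family of finitely dependent traversal times). I would therefore write $T_z^0(0,y)\leq \sum_{i} \sigma_i$, where $\sigma_i$ is the traversal time of the $i$-th box along a fixed good corridor; the $\sigma_i$ are finitely dependent (dependence range bounded in terms of $L$) with uniformly stretched-exponential — indeed exponential — upper tails, so Proposition~\ref{prop:CN} gives $\bP_r(\sum_i\sigma_i\geq t\mid\omega(0)=1)\leq ce^{-c't^\alpha}$ for $t\geq C_0\|y\|$, uniformly in $r\in[r_0,1]$ and $z\neq 0$. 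The uniformity in $r$ is automatic because the block events become only \emph{more} likely as $r$ increases, and uniformity in $z$ because a single forced-empty site is absorbed into the ``at most one bad box'' slack.

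Finally, the moment bound follows by a routine tail integration: for $\beta>0$,
\begin{align*}
	\bE_r\mleft[ T_z^0(0,y)^\beta \,\middle|\, \omega(0)=1 \mright]
	&\leq (C_0\|y\|)^\beta + \int_{C_0\|y\|}^\infty \beta t^{\beta-1}\,\bP_r\mleft( T_z^0(0,y)\geq t \,\middle|\, \omega(0)=1 \mright)\,dt\\
	&\leq (C_0\|y\|)^\beta + \int_{C_0\|y\|}^\infty \beta t^{\beta-1}\phi_0(t)\,dt,
\end{align*}
and since $\phi_0\in\mathcal{K}(d,r_0)$ decays faster than any power, the integral is bounded by a constant (depending only on $d$, $r_0$, $\beta$) times $\|y\|^\beta$ (in fact by a constant, but the displayed form already suffices), giving the claimed supremum. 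The main obstacle I anticipate is the box-traversal estimate: making precise that, conditionally on a corridor of good boxes, an active frog actually propagates through them in linear time with an exponential tail — this requires carefully tracking which frogs have already been activated, using the monotonicity of the frog model in the set of active frogs, and invoking the finitely-dependent structure so that Proposition~\ref{prop:CN} applies cleanly. Everything else (the coarse-graining, the uniformity in $r$ and $z$, and the moment integration) is standard once that estimate is in hand.
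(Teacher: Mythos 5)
There is a genuine gap: the heart of your argument---that along a corridor of good boxes an active frog propagates in linear time with a (stretched) exponential tail---is exactly the hard part, and the tools you invoke do not deliver it. First, Proposition~\ref{prop:CN} applies only to $\{0,1\}$-valued, $L$-dependent variables maximized over paths in $\Pi_M$; it cannot be applied to sum unbounded traversal times $\sigma_i$, so the step ``the $\sigma_i$ are finitely dependent with exponential tails, so Proposition~\ref{prop:CN} gives the bound'' is not available as stated. Second, and more seriously, the claimed ``standard hitting-time estimate'' for the box-to-box step fails for the frog model: an activated frog performs an unconstrained simple random walk on $\Z^d$, and for $d\geq 3$ it is transient, so a single active frog reaches the central sub-box of the next good box only with probability bounded away from one (and may escape to infinity). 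One cannot track a single frog along the corridor; the linear-speed propagation with quantitative tails requires controlling the whole growing cloud of activated frogs (truncations, restarting arguments, etc.), which is precisely the content of the deviation bound of Kubota \cite[Proposition~{2.4}]{Kub19}. Your sketch also quietly needs the traversal times to be finitely dependent, which they are not without a truncation of the walks. So as written the proposal re-derives neither that estimate nor a substitute for it.

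For contrast, the paper does not prove the tail bound from scratch: it observes that $T(0,y,\langle\omega_z^0\rangle,\S)$ is independent of $\omega(0)$ and $\omega(z)$, so the conditional probability equals an unconditional one; it then uses the monotone coupling in $r$ (\cite[Lemma~{2.2}]{Kub20}) to reduce to $r=r_0$, pays a factor $(1-r_0)^{-1}$ to compare $T_z^0$ with the unmodified $T(0,y)$ on the event $\{\omega(0)=1,\,\omega(z)=0\}$, and quotes \cite[Proposition~{2.4}]{Kub19} for the stretched-exponential tail, uniformly in $z$ and $r\in[r_0,1]$. Your final step (the moment bound by tail integration) is fine and matches the paper's ``direct consequence,'' but the first assertion needs either the citation or a genuinely complete renormalization argument replacing your corridor step.
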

\begin{proof}
Let $r \in [r_0,1]$ and $z \in \Z^d \setminus \{0\}$.
Note that $T(0,y,\langle \omega_z^0 \rangle,\S)$ is independent of both $\omega(0)$ and $\omega(z)$.
Hence, for all $y \in \Z^d$ and $t \geq 0$,
\begin{align*}
	\bP_r\mleft( T_z^0(0,y) \geq t \middle| \omega(0)=1 \mright)
	= \bP_r\mleft( T(0,y,\langle \omega_z^0 \rangle,\S) \geq t \mright).
\end{align*}
In addition, due to $r \geq r_0$, a coupling of initial configurations (see \cite[Lemma~{2.2}]{Kub20}) proves
\begin{align*}
	\bP_r\mleft( T(0,y,\langle \omega_z^0 \rangle,\S) \geq t \mright)
	\leq \bP_{r_0}\mleft( T(0,y,\langle \omega_z^0 \rangle,\S) \geq t \mright).
\end{align*}
By using the independence, the right side above is equal to
\begin{align*}
	&\bP_{r_0}\mleft( T(0,y,\langle \omega_z^0 \rangle,\S) \geq t,\,\omega(0)=1,\,\omega(z)=0 \mright) \times \frac{1}{r_0(1-r_0)}\\
	&\leq (1-r_0)^{-1}\bP_{r_0}(T(0,y) \geq t|\omega(0)=1).
\end{align*}
With these observations, for all $y \in \Z^d$ and $t \geq 0$,
\begin{align*}
	\bP_r\mleft( T_z^0(0,y) \geq t \middle| \omega(0)=1 \mright)
	\leq (1-r_0)^{-1}\bP_{r_0}( T(0,y) \geq t|\omega(0)=1).
\end{align*}
To estimate the last probability, we use the following result proved in \cite[Proposition~{2.4}]{Kub19}:
there exist $\phi \in \mathcal{K}(d,r_0)$ and a constant $C_0 \geq 1$ (which depends only on $d$ and $r_0$) such that for all $y \in \Z^d$ and $t \geq C_0\|y\|$,
\begin{align*}
	\bP_{r_0}( T(0,y) \geq t|\omega(0)=1)
	\leq \phi(t).
\end{align*}
Therefore, for all $y \in \Z^d$ and $t \geq C_0\|y\|$,
\begin{align*}
	\bP_r\mleft( T_z^0(0,y) \geq t \middle| \omega(0)=1 \mright)
	\leq (1-r_0)^{-1}\phi(t),
\end{align*}
which implies the first assertion of the lemma since $r$ and $z$ are arbitrary and $\phi \in \mathcal{K}(d,r_0)$.
The second assertion of the lemma is a direct consequence of the first one.
\end{proof}

For any $y \in \Z^d \setminus \{0\}$, write $\mathcal{O}(y)=\mathcal{O}(y,\omega,\S)$ for the set of all sequences $(x_i)_{i=0}^m$ of distinct points in $\Z^d$ satisfying that $T(0,y)=\sum_{i=0}^{m-1}\tau(x_i,x_{i+1})$.
Each element of $\mathcal{O}(y)$ represents an optimal selection and order of frogs to realize the first passage time from $0$ to $y$.
We choose and fix a certain element of $\mathcal{O}(y)$ with a deterministic rule, and denote it by $\gamma_\mathcal{O}(y)=\gamma_\mathcal{O}(y,\omega,\S)$.
In addition, for each $z \in \gamma_\mathcal{O}(y)$, let $\bar{z}$ be the next points of $z$ in $\gamma_\mathcal{O}(y)$ (if any).
Then, the next lemma guarantees that most of the frogs in $\gamma_\mathcal{O}(y)$ do not take so long to pass the baton to their next candidates.
The proof is analogous to those of \cite[Lemmata~{4.3} and 4.4]{Kub20}, so we omit it.

\begin{lem}\label{lem:next}
There exists $\Cl[K]{phi:next} \in \mathcal{K}(d,r_0)$ such that for all $t \geq 0$,
\begin{align*}
	\sup_{\substack{r_0 \leq r \leq 1\\ y \in \Z^d \setminus \{0\}}}
	\bP_r\bigl( T(0,y)=\tau(0,y) \geq t \big| \omega(0)=1 \bigr)
	\leq \Cr{phi:next}(t).
\end{align*}
In particular, there exists $\Cl[K]{phi:range} \in \mathcal{K}(d,r_0)$ such that for all $y \in \Z^d \setminus \{0\}$,
\begin{align*}
	\sup_{r_0 \leq r \leq 1}
	\bP_r\Bigl( \tau(z,\bar{z}) \geq \|y\|^{1/2} \text{ for some } z \in \gamma_\mathcal{O}(y) \setminus \{y\}
	\Big| \omega(0)=1 \Bigr)
	\leq \Cr{phi:range}(\|y\|).
\end{align*}
\end{lem}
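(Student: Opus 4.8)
The plan is to deduce the first bound from the large deviation estimate for the first passage time together with an elementary exit estimate for simple random walk, and then to obtain the second bound from the first one by a union bound over a box of polynomial size; this mirrors the argument of \cite[Lemmata~{4.3} and 4.4]{Kub20}.

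For the first bound, fix $y\in\Z^d\setminus\{0\}$, $r\in[r_0,1]$ and $t\ge0$, and recall (cf.\ the proof of Lemma~\ref{lem:AP}, via \cite[Proposition~{2.4}]{Kub19} and the coupling monotonicity in $r$) that there are $\phi\in\mathcal{K}(d,r_0)$ and $C_0\ge1$ with $\bP_r(T(0,y)\ge s\mid\omega(0)=1)\le\phi(s)$ whenever $s\ge C_0\|y\|$. I would split the event $\{T(0,y)=\tau(0,y)\ge t\}$ according to whether $\tau(0,y)>C_0\|y\|$ or not. On the part where $\tau(0,y)>C_0\|y\|$ one has $T(0,y)\ge\max\{t,C_0\|y\|\}$, so its conditional probability is at most $\phi(\max\{t,C_0\|y\|\})\le\phi(t)$. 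On the part where $\tau(0,y)\le C_0\|y\|$, which is empty unless $t\le C_0\|y\|$, the walk $S_\cdot(0)$ reaches $y$ within $C_0\|y\|$ steps, hence $\max_{k\le C_0\|y\|}\|S_k(0)\|\ge\|y\|$; since each coordinate of $S_\cdot(0)$ is a martingale with bounded increments, the exponential maximal inequality bounds this by $2d\,e^{-c\|y\|}$ for some $c>0$ depending only on $d$ and $C_0$, hence (using $\|y\|\ge t/C_0$ on this part) by $2d\,e^{-ct/C_0}$. Adding the two contributions gives $\bP_r(T(0,y)=\tau(0,y)\ge t\mid\omega(0)=1)\le\phi(t)+2d\,e^{-ct/C_0}\le\Cr{phi:next}(t)$ for a suitable $\Cr{phi:next}\in\mathcal{K}(d,r_0)$, uniformly in $r$ and $y$.

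For the second bound, I would first record two deterministic facts about $\gamma_\mathcal{O}(y)=(x_0,\dots,x_m)$: since $\tau\ge\|\cdot\|$ along any realizing chain, each $x_j$ satisfies $\|x_j\|\le T(0,x_j)\le T(0,y)$; and optimality of $\gamma_\mathcal{O}(y)$ forces $T(x_j,y)=\tau(x_j,x_{j+1})+T(x_{j+1},y)$ for $j<m$, which with the triangle inequality gives $T(x_j,x_{j+1})=\tau(x_j,x_{j+1})$, while $\tau(x_j,x_{j+1})<\infty$ forces $\omega(x_j)=1$. Hence, on $\{T(0,y)\le C_0\|y\|\}$ every point of $\gamma_\mathcal{O}(y)$ lies in $B(0,C_0\|y\|)$, so the event in the statement is contained in
\begin{align*}
	\bigcup_{\substack{w,w'\in B(0,C_0\|y\|)\\ w\ne w'}}\bigl\{T(w,w')=\tau(w,w')\ge\|y\|^{1/2},\ \omega(w)=1\bigr\}.
\end{align*}
By translation invariance of $(\omega,\S)$, each set here has $\bP_r$-probability at most $\bP_r\bigl(T(0,w'-w)=\tau(0,w'-w)\ge\|y\|^{1/2}\mid\omega(0)=1\bigr)\le\Cr{phi:next}(\|y\|^{1/2})$, so its conditional probability given $\{\omega(0)=1\}$ is at most $r_0^{-1}\Cr{phi:next}(\|y\|^{1/2})$. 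Since there are at most $c\|y\|^{2d}$ such pairs and $\bP_r(T(0,y)>C_0\|y\|\mid\omega(0)=1)\le\phi(C_0\|y\|)$, a union bound yields at most $\phi(C_0\|y\|)+c\,r_0^{-1}\|y\|^{2d}\Cr{phi:next}(\|y\|^{1/2})\le\Cr{phi:range}(\|y\|)$ for a suitable $\Cr{phi:range}\in\mathcal{K}(d,r_0)$, since a polynomial times a stretched exponential is again dominated by a stretched exponential (and for bounded $\|y\|$ one enlarges the leading constant).

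The step I expect to require the most care is passing from the event in the statement to the displayed union: one must keep the constraint $\omega(w)=1$, since otherwise $T(w,w')=\tau(w,w')=\infty$ would contribute an order-one term, and one must discard $\{T(0,y)>C_0\|y\|\}$ beforehand so that $\gamma_\mathcal{O}(y)$ is confined to a box of polynomial size; granted these points, both bounds reduce to the large deviation estimate of \cite{Kub19} and the elementary maximal inequality.
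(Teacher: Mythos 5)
Your proof is correct, and since the paper omits this argument (referring to \cite[Lemmata~4.3 and 4.4]{Kub20}), your route is essentially the intended one: the first bound by splitting on $\tau(0,y)$ versus $C_0\|y\|$ and combining the uniform upper-tail estimate for $T(0,y)$ with the Azuma-type sub-ballisticity bound for a single walk, and the second by discarding $\{T(0,y)>C_0\|y\|\}$ and taking a union bound over pairs in $B(0,C_0\|y\|)$. The deterministic facts you rely on (that $\gamma_\mathcal{O}(y)\subset B(0,C_0\|y\|)$ on $\{T(0,y)\le C_0\|y\|\}$, and that $T(z,\bar{z})=\tau(z,\bar{z})$ with $\omega(z)=1$ for $z\in\gamma_\mathcal{O}(y)\setminus\{y\}$) are exactly those the paper itself invokes elsewhere, so there is no gap.
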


We finally mention a result for sums of dependent random variables.
For a given $L \in \N$, a family $(X(v))_{v \in \Z^d}$ of random variables is said to be \emph{$L$-dependent} if any two sub-families $(X(u))_{u \in A}$ and $(X(v))_{v \in B}$ are independent whenever $A,B \subset \Z^d$ satisfy that $\|u-v\|>L$ for all $u \in A$ and $v \in B$.
Moreover, for each $M \geq 1$, denote by $\Pi_M$ the set of all sequences $(x_i)_{i=0}^m$ of distinct points in $\Z^d$ satisfying that $\sum_{i=0}^{m-1}\|x_i-x_{i+1}\| \leq M$.
Then, Can and Nakajima~\cite[Lemma~{2.6}]{CanNak19} obtained the following upper tail estimate for sums of an $L$-dependent family of random variables taking values in $\{0,1\}$.

\begin{prop}\label{prop:CN}
Let $(X(v))_{v \in \Z^d}$ be a family of random variables taking values in $\{0,1\}$, defined on a measurable space equipped with a probability measure $\bP$.
Furthermore, assume that $(X(v))_{v \in \Z^d}$ is $L$-dependent and satisfies
\begin{align*}
	p:=\sup_{v \in \Z^d}\bP(X(v)=1) \leq (3L+1)^{-d}.
\end{align*}
Then, there exists a constant $\Cl{CN} \geq 1$ (which depends only on $d$) such that for any $M \geq 1$ and $t \geq \Cr{CN}L^{d+1}\max\{ 1,Mp^{1/d} \}$,
\begin{align*}
	\bP\biggl( \max_{\pi \in \Pi_M} \sum_{v \in \pi} X(v) \geq t \biggr)
	\leq 2^d\exp\biggl\{ -\frac{t}{(16L)^d} \biggr\}.
\end{align*}
\end{prop}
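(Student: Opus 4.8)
The plan is to bound the event by a union over the $(L+1)^d$ colour classes $C_a:=a+(L+1)\Z^d$ and over suitable ``compressed paths'', reducing to an \emph{independent} field inside one colour class, and then to control the number of compressed paths by a volume estimate. Since two distinct points of a common $C_a$ lie at $\|\cdot\|$-distance $\ge L+1>L$, the $L$-dependence makes $(X(v))_{v\in C_a}$ an independent family. Now fix a path $\pi=(x_0,\dots,x_m)\in\Pi_M$ — which I take to be based at $x_0=0$, as in the intended first-passage-time applications — with $\sum_{v\in\pi}X(v)\ge t$; then at least $t$ of the distinct sites $x_i$ carry a $1$, so by the pigeonhole principle at least $t':=\lceil t/(L+1)^d\rceil$ of them lie in one class $C_a$, say $w_1,\dots,w_{t'}$ in the order visited by $\pi$. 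Deleting from $\pi$ every vertex except $0$ and the $w_k$ and using the triangle inequality along the disjoint arcs of $\pi$ between consecutive elements of $0,w_1,\dots,w_{t'}$ shows $\|w_1\|+\sum_{k=1}^{t'-1}\|w_k-w_{k+1}\|\le M$ (with an obvious adjustment if $0$ is itself one of the $w_k$). Hence $\{\max_{\pi\in\Pi_M}\sum_{v\in\pi}X(v)\ge t\}$ is contained in $\bigcup_a\bigcup_W\{X\equiv1\text{ on }W\}$, where $a$ runs over colours and $W=(w_1,\dots,w_{t'})$ over sequences of distinct points of $C_a$ with $\|w_1\|+\sum_k\|w_k-w_{k+1}\|\le M$; and for each such $W$, independence within $C_a$ gives $\bP(X\equiv1\text{ on }W)\le p^{t'}$.

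Next I would count the admissible $W$ for a fixed colour $a$. Rescaling $C_a$ onto $\Z^d$ by $v\mapsto(v-a)/(L+1)$ divides all $\|\cdot\|$-distances by $L+1$, turning $W$ into a walk of $t'$ distinct lattice stops, based at a fixed point, of total $\|\cdot\|$-length at most $M':=M/(L+1)$. The number of such walks is comparable, up to a factor $C(d)^{t'}$, to the Euclidean volume of $\{(y_1,\dots,y_{t'})\in(\R^d)^{t'}:\sum_k\|y_k\|\le M'\}$, i.e. to the $\ell^1$-ball of radius $M'$ in $\R^{dt'}$, which equals $(2M')^{dt'}/(dt')!$; by Stirling this is $\bigl(c_d(M'/t')^d\bigr)^{t'}$ with $c_d=c_d(d)$. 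I expect this to be the delicate point: it is essential that \emph{no} spurious polynomial-in-$M$ prefactor appear (a crude term-by-term summation over step lengths produces an extra factor $M$, harmless when $t\asymp M$ but fatal when $t\ll M$ and $p$ is minute), so one must genuinely use the volume bound; and it is equally essential that the power of $p$ in the per-path probability ($=t'$) match the power $t'$ in this count — which is exactly why one passes to one colour class \emph{before} estimating probabilities, rather than bounding the $L$-dependent probability of the full multicolour compressed path, which would only give $p^{\,t/(2L+1)^d}$. Combining,
\begin{align*}
	\bP\Bigl(\max_{\pi\in\Pi_M}\sum_{v\in\pi}X(v)\ge t\Bigr)\le(L+1)^d\Bigl(c_d\bigl(M/((L+1)t')\bigr)^d p\Bigr)^{t'}.
\end{align*}

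Finally I would plug in $t\ge\Cr{CN}L^{d+1}\max\{1,Mp^{1/d}\}$. Using $L^{d+1}/(L+1)^d\ge L/2^d$ gives $t'\ge\Cr{CN}LMp^{1/d}/2^d$ and $t'\ge\Cr{CN}L/2^d$, so the base above is at most $c_d2^{d^2}/\Cr{CN}^d$, an arbitrarily small constant $\alpha$ once $\Cr{CN}=\Cr{CN}(d)$ is taken large. Splitting $\alpha^{t'}=\alpha^{t'/2}\cdot\alpha^{t'/2}$, the first half absorbs the prefactor $(L+1)^d$ (since $t'\ge\Cr{CN}L/2^d$ while $\ln(L+1)\le L$), and the second half, together with $t/(16L)^d\le8^{-d}t'$, produces the asserted bound $2^d\exp\{-t/(16L)^d\}$ (with room to spare); the hypothesis $p\le(3L+1)^{-d}$ merely provides extra slack in these estimates.
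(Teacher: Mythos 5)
The paper itself does not prove this proposition: it is imported verbatim from \cite[Lemma~2.6]{CanNak19}, so there is no internal proof to compare yours against; I can only judge your argument on its own terms. On those terms it is essentially sound, and your decision to read $\Pi_M$ as anchored at $x_0=0$ is not just permissible but necessary: for an i.i.d.\ Bernoulli field with small positive parameter, paths allowed to start anywhere would make the unrestricted maximum exceed $t$ almost surely (try disjoint, widely separated translates of a single favourable configuration), so the statement is only true in the anchored form, which is also the only form used in this paper, since every application is to $\gamma_\mathcal{O}(y)\in\Pi_{C_0\|y\|}$, a path starting at $0$. Your colouring into the $(L+1)^d$ classes $a+(L+1)\Z^d$, the pigeonhole extraction of $t'=\lceil t/(L+1)^d\rceil$ marked points, and the bound $p^{t'}$ per marked configuration are all correct; note only that $p^{t'}$ needs mutual, not merely pairwise, independence within a class, which does follow from the paper's definition of $L$-dependence by separating one point from the rest and iterating.

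The one step that genuinely requires justification is your path count, and the justification is available but missing. The claim ``number of compressed walks $\le C(d)^{t'}\times$ volume of the $\ell^1$-ball of radius $M'$ in $\R^{dt'}$'' is false in general: when $M'\ll dt'$ that volume is super-exponentially small while the number of walks is at least $1$ (and at least $(2d)^{t'-1}$ as soon as $M'\ge t'$), so the asserted bound $\bigl(c_d(M'/t')^d\bigr)^{t'}$ cannot be read off from the volume alone. What rescues it is precisely the structure you set up: distinct points of one colour class are at $\ell^1$-distance at least $L+1$, so any admissible $W$ satisfies $t'-1\le M/(L+1)=M'$, hence $M'/t'\ge 1/2$; in that regime the standard lattice-point count for an $\ell^1$-ball in dimension $dt'$ is indeed at most $\bigl(c_d(M'/t')^d\bigr)^{t'}$ for a purely dimensional $c_d$. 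With that one-line remark inserted, the rest of your computation is correct: the base is at most $c_d\bigl(M/((L+1)t')\bigr)^dp\le c_d2^{d^2}\Cr{CN}^{-d}$ by $t'\ge\Cr{CN}LMp^{1/d}/2^d$, the prefactor $(L+1)^d$ is absorbed using $t'\ge\Cr{CN}L/2^d$, and $t/(16L)^d\le 8^{-d}t'$ yields the stated bound; and, as you observe, the hypothesis $p\le(3L+1)^{-d}$ then plays no essential role in this route.
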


\section{Proof of the main result}\label{sect:pf_main}
In this section, we prove Theorem~\ref{thm:Lipschitz} and Corollary~\ref{cor:Lipschitz_shape}.
The main tool here is Russo's formula, which computes the derivative of $\E_r[X]$ (as a function of $r$) for a random variable $X$ which depends only on the initial configuration in a finite region (see for instance \cite[Theorem~{2.32}]{Gri99_book}).
The basic idea of the proof of Theorem~\ref{thm:Lipschitz} is to apply Russo's formula to $E[T(0,y)]$.
However, this approach does not work directly because $E[T(0,y)]$ may depend on the initial configuration in the whole of $\Z^d$.
To overcome this problem, let us define for $s \in \{0,1\}$, $y,z \in \Z^d$ and $N \in \N$,
\begin{align*}
	U_z^s(y,N):=T(0,y,\langle \omega_z^s \rangle,\S)\1{\{ T(0,y,\langle \omega_z^s \rangle,\S) \leq N \}}.
\end{align*}
It is clear that $U_z^s(y,N)$ is independent of the initial configuration outside $B(0,N)$.
First, observe the behavior of $\bE_r[U_0^1(y,N)]$ as $N$ and $\|y\|$ tend to infinity.

\begin{lem}\label{lem:U_lim}
We have for any $r \in [r_0,1]$ and $x \in \R^d \setminus \{0\}$,
\begin{align*}
	\lim_{k \to \infty}\lim_{N \to \infty} \frac{1}{k}\bE_r[U_0^1(kx,N)]=\mu_r(x).
\end{align*}
\end{lem}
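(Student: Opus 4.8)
The plan is to decompose the double limit into the two inner and outer limits and control each by the passage-time estimates already established. Write $T_k:=T(0,kx,\langle\omega\rangle,\S)$ for the (unrestricted) first passage time, which under $\bP_r$ on the event $\{\omega(0)=1\}$ is just $T(0,kx)$. The key observation is that $U_0^1(kx,N)=T_k\1{\{T_k\le N\}}$ increases to $T_k$ as $N\to\infty$, so by monotone convergence $\lim_{N\to\infty}\bE_r[U_0^1(kx,N)]=\bE_r[T_k\mathbf{1}_{\{\omega(0)=1\}}]/\bP_r(\omega(0)=1)$ once we know this expectation is finite; but Lemma~\ref{lem:AP} (the $\beta=1$ case, applied with $z$ irrelevant, or directly the tail bound $\phi_0$) gives $\bE_r[T_k\mid\omega(0)=1]\le C\|kx\|$ for a constant $C=C(d,r_0)$, so the inner limit exists and equals $\bE_r[T(0,kx)\mid\omega(0)=1]$, with the uniform bound $\tfrac1k\bE_r[U_0^1(kx,N)]\le \tfrac1k\bE_r[T(0,kx)\mid\omega(0)=1]\le C\|x\|$.

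Next I would identify the outer limit. By the shape theorem / time-constant convergence \eqref{eq:timeconst}, $\tfrac1k T(0,kx)\to\mu_r(x)$, $\bP_r$-a.s.\ on $\{\omega(0)=1\}$. To upgrade this almost-sure convergence to convergence of the conditional expectations $\tfrac1k\bE_r[T(0,kx)\mid\omega(0)=1]$, I would invoke uniform integrability of the family $\{\tfrac1k T(0,kx)\}_{k\ge1}$ under $\bP_r(\,\cdot\mid\omega(0)=1)$: this follows from Lemma~\ref{lem:AP}, since the stretched-exponential tail bound $\bP_r(T(0,kx)\ge t\mid\omega(0)=1)\le\phi_0(t)$ for $t\ge C_0\|kx\|$ gives a uniform bound on $\bE_r[(\tfrac1k T(0,kx))^2\mid\omega(0)=1]$ (the $\beta=2$ case of the second assertion), hence uniform integrability. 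Therefore $\lim_{k\to\infty}\tfrac1k\bE_r[T(0,kx)\mid\omega(0)=1]=\mu_r(x)$, and combining with the inner limit gives the claim.

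A technical point to handle carefully is the interchange of the two limits in the statement: the lemma asserts $\lim_{k\to\infty}\lim_{N\to\infty}$, i.e.\ first $N\to\infty$ then $k\to\infty$, which is exactly the order produced above, so no diagonal argument is needed. One should also note that Lemma~\ref{lem:AP} is stated for $T_z^0(0,y)$ with $z\neq0$, but for the present purpose I only need the auxiliary tail bound $\bP_{r_0}(T(0,y)\ge t\mid\omega(0)=1)\le\phi(t)$ from \cite[Proposition~2.4]{Kub19} quoted inside its proof, together with the monotonicity-in-$r$ coupling \cite[Lemma~2.2]{Kub20}, so the required estimate $\bP_r(T(0,kx)\ge t\mid\omega(0)=1)\le\phi(t)$ for $t\ge C_0\|kx\|$ and all $r\in[r_0,1]$ is available directly.

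I expect the main (though still routine) obstacle to be the bookkeeping around the conditioning on $\{\omega(0)=1\}$ and the independence of $T(0,y,\langle\omega\rangle,\S)$ from $\omega(0)$: one must be slightly careful that $U_0^1(kx,N)$ as written already forces the origin to hold a frog (via $\langle\cdot\rangle$), so that $\bE_r[U_0^1(kx,N)]$ is an unconditional expectation equal to $\bE_r[T(0,kx)\1{\{T(0,kx)\le N\}}\mid\omega(0)=1]$, and the factor $\bP_r(\omega(0)=1)=r$ does not spuriously enter. Once this identification is made, monotone convergence, the first-moment bound, and uniform integrability finish the proof with no hard estimates beyond those already in Section~\ref{sect:prelim}.
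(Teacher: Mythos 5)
Your proposal is correct and follows essentially the same route as the paper: monotone convergence (justified by the tail/moment bounds of Lemma~\ref{lem:AP}) identifies the inner limit as $\tfrac1k\bE_r[T(0,kx)\mid\omega(0)=1]$, and then uniform integrability of $(T(0,kx)/k)_k$ under $\bP_r(\cdot\mid\omega(0)=1)$ combined with the a.s.\ convergence \eqref{eq:timeconst} (the paper phrases this as the Vitali convergence theorem) gives the outer limit. Your remark that the needed tail bound for $T(0,kx)$ follows either from the ingredients in the proof of Lemma~\ref{lem:AP} or by domination $T(0,kx)\le T_z^0(0,kx)$ matches how the paper implicitly uses that lemma.
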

\begin{proof}
Fix $r \in [r_0,1]$ and $x \in \R^d \setminus \{0\}$.
The monotone convergence theorem, combined with Lemma~\ref{lem:AP}, implies that for any $k \geq 1$,
\begin{align*}
	\lim_{N \to \infty} \frac{1}{k}\bE_r[U_0^1(kx,N)]
	&= \lim_{N \to \infty} \frac{1}{k}\bE_r\mleft[ T_0^1(0,kx)\1{\{ T_0^1(0,kx) \leq N \}} \mright]\\
	&= \frac{1}{k}\bE_r[T_0^1(0,kx)]
		= \frac{1}{k}\bE_r[T(0,kx)|\omega(0)=1].
\end{align*}
Once we prove that $(T(0,kx)/k)_{k=1}^\infty$ is uniformly integrable under $\bP_r(\cdot|\omega(0)=1)$, the rightmost side above converges to $\mu_r(x)$ as $k \to \infty$ by \eqref{eq:timeconst} and the Vitali convergence theorem, and the lemma follows.

It remains to check the uniform integrability of $(T(0,kx)/k)_{k=1}^\infty$.
The Cauchy--Schwarz inequality and Lemma~\ref{lem:AP} imply that there exists a constant $c$ (which depends only on $d$ and $r_0$) such that for any $\lambda \geq C_0\|x\|$,
\begin{align*}
	&\sup_{k \geq 1}\bE_r\biggl[ \frac{1}{k}T(0,kx) \1{\{ T(0,kx)/k \geq \lambda \}} \bigg| \omega(0)=1 \biggr]\\
	&\leq \sup_{k \geq 1}\frac{1}{k} \bE_r[T(0,kx)^2|\omega(0)=1]^{1/2}
		\,\bP_r(T(0,kx) \geq \lambda k|\omega(0)=1)^{1/2}\\
	&\leq c\|x\|\sup_{k \geq 1}\phi_0(\lambda k)^{1/2}
		\leq c\|x\|\phi_0(\lambda)^{1/2}.
\end{align*}
The rightmost side converges to zero as $\lambda \to \infty$, and hence $(T(0,kx)/k)_{k=1}^\infty$ is uniformly integrable under $\bP_r(\cdot|\omega(0)=1)$.
\end{proof}

Although Lemma~\ref{lem:U_lim} treats the behavior of $\bE_r[U_0^1(y,N)]$, let us next estimate the difference between $T_z^s(0,y)$ and $U_z^s(y,N)$ for $s \in \{0,1\}$ and $z \in \Z^d \setminus \{0\}$.

\begin{lem}\label{lem:comparable}
There exists $\Cl[K]{phi:comp} \in \mathcal{K}(d,r_0)$ such that for all $s \in \{0,1\}$, $y,z \in \Z^d \setminus \{0\}$, $N \geq C_0\|y\|$ and $r \in [r_0,1]$,
\begin{align}\label{eq:comparable}
	0 \leq \bE_r\bigl[ T_z^s(0,y)-U_z^s(y,N) \big| \omega(0)=1 \bigr] \leq \Cr{phi:comp}(N)\|y\|.
\end{align}
\end{lem}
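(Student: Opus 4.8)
The plan is to recognize that the difference $T_z^s(0,y)-U_z^s(y,N)$ is nothing but a truncated tail term, and then to control its conditional expectation using the stretched-exponential bounds of Lemma~\ref{lem:AP} — essentially re-running the uniform-integrability computation from the proof of Lemma~\ref{lem:U_lim}.

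First I would note that since $z\neq 0$, on the event $\{\omega(0)=1\}$ we have $\langle\omega_z^s\rangle=\omega_z^s$, hence $U_z^s(y,N)=T_z^s(0,y)\1{\{T_z^s(0,y)\leq N\}}$ there. Therefore, on $\{\omega(0)=1\}$,
\begin{align*}
	0\leq T_z^s(0,y)-U_z^s(y,N)=T_z^s(0,y)\1{\{T_z^s(0,y)>N\}},
\end{align*}
which already yields the lower bound in \eqref{eq:comparable} and reduces the upper bound to estimating $\bE_r[T_z^s(0,y)\1{\{T_z^s(0,y)>N\}}\mid\omega(0)=1]$.

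For that, I would apply the Cauchy--Schwarz inequality,
\begin{align*}
	\bE_r\bigl[T_z^s(0,y)\1{\{T_z^s(0,y)>N\}}\bigm|\omega(0)=1\bigr]
	\leq\bE_r\bigl[T_z^s(0,y)^2\bigm|\omega(0)=1\bigr]^{1/2}\,
	\bP_r\bigl(T_z^s(0,y)>N\bigm|\omega(0)=1\bigr)^{1/2}.
\end{align*}
To handle both values of $s$ at once, I would use monotonicity of the first passage time in the initial configuration — adding the frog at $z$ can only speed up propagation, so $T_z^1(0,y)\leq T_z^0(0,y)$ pointwise (the coupling of \cite[Lemma~{2.2}]{Kub20}) — and bound the right-hand side by the corresponding expression with $s=0$. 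The second-moment factor is then at most $c\|y\|^2$ by the second assertion of Lemma~\ref{lem:AP} with $\beta=2$, while, since $N\geq C_0\|y\|$ and $\{T_z^0(0,y)>N\}\subset\{T_z^0(0,y)\geq N\}$, the tail factor is at most $\phi_0(N)$ by the first assertion of Lemma~\ref{lem:AP}.

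Combining these bounds gives $\bE_r[T_z^s(0,y)-U_z^s(y,N)\mid\omega(0)=1]\leq c\|y\|\,\phi_0(N)^{1/2}$; since $\phi_0\in\mathcal{K}(d,r_0)$, the map $t\mapsto c\,\phi_0(t)^{1/2}$ is again a stretched-exponential function in $\mathcal{K}(d,r_0)$, which serves as the function claimed in the lemma. I do not anticipate a genuine obstacle here: the only mildly delicate point is the reduction from arbitrary $s\in\{0,1\}$ to $s=0$ via configuration monotonicity, with everything else being the same truncation-and-tail-estimate already carried out for Lemma~\ref{lem:U_lim}.
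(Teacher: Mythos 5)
Your proposal is correct and takes essentially the same route as the paper's proof: rewrite the difference as the tail term $T_z^s(0,y)\1{\{T_z^s(0,y)>N\}}$ on $\{\omega(0)=1\}$, apply Cauchy--Schwarz, use the monotonicity $T_z^s(0,y)\le T_z^0(0,y)$ to reduce to $s=0$, and conclude via both assertions of Lemma~\ref{lem:AP}, taking $c\,\phi_0(\cdot)^{1/2}$ as the stretched-exponential function. The only cosmetic remark is that the bound $T_z^1(0,y)\le T_z^0(0,y)$ is a deterministic consequence of $\omega_z^1\ge\omega_z^0$ with the same walks $\S$, so no coupling lemma is actually needed there.
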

\begin{proof}
Fix $s \in \{0,1\}$, $y,z \in \Z^d \setminus \{0\}$, $N \geq C_0\|y\|$ and $r \in [r_0,1]$.
Note that on the event $\{ \omega(0)=1 \}$,
\begin{align*}
	T_z^s(0,y)-U_z^s(y,N)=T_z^s(0,y)\1{\{ T_z^s(0,y)>N \}}.
\end{align*}
Hence, the first inequality of \eqref{eq:comparable} is trivial.
For the second inequality of \eqref{eq:comparable}, we use the above equality and the Cauchy--Schwarz inequality:
\begin{align*}
	&\bE_r\bigl[ T_z^s(0,y)-U_z^s(y,N) \big| \omega(0)=1 \bigr]\\
	&= \bE_r\mleft[ T_z^s(0,y)\1{\{ T_z^s(0,y)>N \}} \middle| \omega(0)=1 \mright]\\
	&\leq \bE_r\mleft[ T_z^s(0,y)^2 \middle| \omega(0)=1 \mright]^{1/2}
		\,\bP_r\mleft( T_z^s(0,y)>N \middle| \omega(0)=1 \mright)^{1/2}.
\end{align*}
The fact that $T_z^0(0,y) \geq T_z^s(0,y)$ and Lemma~\ref{lem:AP} imply that there exists a constant $c$ (which depends only on $d$ and $r_0$) such that the rightmost side above is smaller than or equal to
\begin{align*}
	&\bE_r\mleft[ T_z^0(0,y)^2 \middle| \omega(0)=1 \mright]^{1/2}
		\,\bP_r\mleft( T_z^0(0,y)>N \middle| \omega(0)=1 \mright)^{1/2}\\
	&\leq c\phi_0(N)^{1/2}\|y\|.
\end{align*}
Therefore, due to $\phi_0 \in \mathcal{K}(d,r_0)$, the second inequality of \eqref{eq:comparable} follows by taking $\Cr{phi:comp}(\cdot):=c\phi_0(\cdot)^{1/2}$.
\end{proof}

We are now in a position to prove Theorem~\ref{thm:Lipschitz}.

\begin{proof}[\bf Proof of Theorem~\ref{thm:Lipschitz}]
It suffices to show that there exist constants $A_1,A_2>0$ (which depend only on $d$ and $r_0$) such that if $\|y\|$ and $N \geq C_0\|y\|$ are large enough, then
\begin{align}\label{eq:derivative}
	A_1\|y\| \leq -\frac{d}{dr}\bE_r[U_0^1(y,N)] \leq A_2\|y\|,\qquad r_0 \leq r \leq 1.
\end{align}
Indeed, \eqref{eq:derivative} yields that the functions 
\begin{align*}
	\bE_r[U_0^1(y,N)]+A_ir\|y\|,\qquad i=1,2,
\end{align*}
are decreasing and increasing in $r \in [r_0,1]$, respectively.
Hence, for any $p,q \in [r_0,1]$ with $p<q$ and $x \in \Z^d \setminus \{0\}$, if $k \geq 1$ and $N \geq C_0\|kx\|$ are large enough, then
\begin{align*}
	A_1(q-p)\|x\|
	\leq \frac{1}{k}\bE_p[U_0^1(kx,N)]-\frac{1}{k}\bE_q[U_0^1(kx,N)]
	\leq A_2(q-p)\|x\|.
\end{align*}
From Lemma~\ref{lem:U_lim}, letting $N \to \infty$ and $k \to \infty$ proves that for all $p,q \in [r_0,1]$ with $p<q$ and $x \in \Z^d \setminus \{0\}$,
\begin{align*}
	A_1(q-p)\|x\| \leq \mu_p(x)-\mu_q(x) \leq A_2(q-p)\|x\|.
\end{align*}
This compound inequality can easily be extended to all $x \in \R^d \setminus \{0\}$ since $\mu_p(\cdot)$ and $\mu_q(\cdot)$ are norms on $\R^d$, and the theorem follows (the strict monotonicity and the Lipschitz continuity of the time constant are direct consequences of the first assertion since we can take $r_0 \in (0,1)$ arbitrarily).

To prove \eqref{eq:derivative}, let us prepare the following propositions, which provide upper and lower bounds for sums of influences on the first passage time.

\begin{prop}\label{prop:influ_upper}
There exists a constant $\Cl{influ_upper}>0$ (which depends only on $d$ and $r_0$) such that if $\|y\|$ is large enough and $N \geq C_0\|y\|$, then for all $r \in [r_0,1]$,
\begin{align*}
	\sum_{z \in B(0,N) \setminus \{0,y\}}
	\bE_r\mleft[ T_z^0(0,y)-T_z^1(0,y) \middle| \omega(0)=1 \mright]
	\leq \Cr{influ_upper}\|y\|.
\end{align*}
\end{prop}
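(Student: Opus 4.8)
The plan is to reduce the sum of influences to a sum over the single, deterministically chosen optimal skeleton $\gamma_\mathcal{O}(y)$, and then to show that deleting one frog from this skeleton costs only a bounded amount on average, so that the whole sum is controlled by the length of $\gamma_\mathcal{O}(y)$, which is at most $T(0,y)+1$ and hence of order $\|y\|$ in expectation by Lemma~\ref{lem:AP}. First I would rewrite a single summand: conditioning on $\omega(z)$ and using that $T_z^0(0,y)$ and $T_z^1(0,y)$ are independent of $\omega(z)$ gives, for $z\in\Z^d\setminus\{0\}$,
\begin{align*}
	\bE_r\mleft[ T_z^0(0,y)-T_z^1(0,y) \middle| \omega(0)=1 \mright]
	= \frac{1}{r}\,\bE_r\mleft[ T_z^0(0,y)-T(0,y) \middle| \omega(0)=1 \mright],
\end{align*}
where $T(0,y)=T(0,y,\langle\omega\rangle,\S)$; since $r\ge r_0$, it then suffices to bound $\bE_r[\Sigma\mid\omega(0)=1]$ with $\Sigma:=\sum_{z\in B(0,N)\setminus\{0,y\}}(T_z^0(0,y)-T(0,y))$. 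Each summand is nonnegative (removing a frog cannot accelerate propagation), and it vanishes whenever $z$ is not a vertex of $\gamma_\mathcal{O}(y)$, since then $\gamma_\mathcal{O}(y)$ realises the first passage time equally well in $\langle\omega_z^0\rangle$ (the step-times $\tau(x_j,x_{j+1})$ of $\gamma_\mathcal{O}(y)$ depend only on $\omega(x_j)$ with $x_j\ne z$). Hence $\Sigma=\sum_{z\in\gamma_\mathcal{O}(y)\setminus\{0,y\}}(T_z^0(0,y)-T(0,y))$ is a sum of at most $T(0,y)$ nonzero terms with indices in $B(0,T(0,y))$; in particular the truncation radius $N$ disappears.

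Next I would estimate a single summand. Writing $\gamma_\mathcal{O}(y)=(x_0,\dots,x_m)$ and $z=x_i$ with $1\le i\le m-1$, rerouting $\dots\to x_{i-1}\to x_{i+1}\to\dots$ and using \eqref{eq:triangle} together with the invariance of the step-times $\tau(x_j,x_{j+1})$, $j\ne i$, under deletion of the frog at $x_i$, yields
\begin{align*}
	T_z^0(0,y)-T(0,y)\le T_z^0(x_{i-1},x_{i+1})-\tau(x_{i-1},x_i)-\tau(x_i,x_{i+1}).
\end{align*}
By Lemma~\ref{lem:next}, outside an event of stretched-exponentially small probability all step-times of $\gamma_\mathcal{O}(y)$ lie below $\|y\|^{1/2}$, so $x_{i-1},x_{i+1}\in B(x_i,\|y\|^{1/2})$. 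On this event I would replace the deleted frog by a spare one: continue the walk from $x_{i-1}$ a few steps past $x_i$ until it meets a site $w$ with $\omega(w)=1$, and then pass from $w$ to $x_{i+1}$ through the ambient frogs while avoiding $x_i$. Bounding this last leg by a first-passage estimate in the spirit of Lemma~\ref{lem:AP} and using $\|w-x_{i+1}\|\le\|w-x_i\|+\tau(x_i,x_{i+1})$, one reaches a bound of the form
\begin{align*}
	T_z^0(0,y)-T(0,y)\le c\,\tau(x_i,x_{i+1})+R(x_i),
\end{align*}
where $(R(v))_{v\in\Z^d}$ is a family of nonnegative ``local errors'' with stretched-exponential tails, uniform in $r\in[r_0,1]$ and in the vertex.

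Finally I would sum along the skeleton. On the good event above, $\Sigma\le c\sum_i\tau(x_i,x_{i+1})+\sum_{v\in\gamma_\mathcal{O}(y)}R(v)\le c\,T(0,y)+\sum_{v\in\gamma_\mathcal{O}(y)}R(v)$. Since $\gamma_\mathcal{O}(y)\in\Pi_{T(0,y)}$, a truncation-and-level-set decomposition of the $R(v)$ brings the problem into the framework of Proposition~\ref{prop:CN}, yielding $\sum_{v\in\gamma_\mathcal{O}(y)}R(v)\le c'\,T(0,y)$ off a stretched-exponentially rare event, so that $\Sigma\le c''\,T(0,y)$ there; this contributes at most $c''\,\bE_r[T(0,y)\mid\omega(0)=1]\le c'''\|y\|$ by Lemma~\ref{lem:AP}. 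On the remaining (stretched-exponentially rare) events --- where Lemma~\ref{lem:next} fails, or $T(0,y)$, or $\sum_v R(v)$, is atypically large --- one controls $\Sigma$ crudely by a polynomial in $T(0,y)$ via \eqref{eq:triangle}, and the corresponding contribution is negligible by the Cauchy--Schwarz inequality, the moment bound of Lemma~\ref{lem:AP}, and the tail bounds of Lemmata~\ref{lem:AP} and \ref{lem:next}.

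The hard part will be the single-summand estimate: one must show that deleting one skeleton frog costs, apart from a term of order $\tau(x_i,x_{i+1})$ that telescopes against $T(0,y)$, only a genuinely local and stretched-exponentially integrable amount --- i.e.\ one must find a replacement route staying near the deleted frog and reconnecting to the skeleton without paying the full fluctuation of an independent random walk aimed at the (possibly distant) target $x_{i+1}$ --- and this uniformly in $r\in[r_0,1]$, uniformly in the deleted site, and under the conditioning $\{\omega(0)=1\}$, which correlates the skeleton itself with the randomness used to build the detour. This is exactly why the estimate is deferred to Section~\ref{sect:influence}.
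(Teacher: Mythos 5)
Your reductions up to the single-summand bound essentially coincide with the paper's proof: the factor $1/r\le 1/r_0$ obtained by conditioning on $\omega(z)$, the observation that the influence of $z$ vanishes unless $z$ lies on the optimal skeleton $\gamma_\mathcal{O}(y)$, the pathwise bound $T_z^0(0,y)-T(0,y)\le T_z^0(x_{i-1},x_{i+1})-\tau(x_{i-1},x_i)-\tau(x_i,x_{i+1})$, and the telescoping of the ``typical'' part against $T(0,y)\le C_0\|y\|$ all appear there. The genuine gap is the assertion that the detour cost can be written as $c\,\tau(x_i,x_{i+1})+R(x_i)$ with $(R(v))_{v\in\Z^d}$ a family of \emph{local} variables with stretched-exponential tails to which Proposition~\ref{prop:CN} applies after a level-set decomposition. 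Light tails are indeed available: by Lemma~\ref{lem:AP} one has $T_z^0(w,x_{i+1})\le C_0\|w-x_{i+1}\|+E$ with $E$ stretched-exponentially tailed uniformly in everything. But $E$ is not local: the event $\{E\ge\ell\}$ is determined by the frogs in a region whose diameter is of order $\|w-x_{i+1}\|\approx\tau(x_i,x_{i+1})$, which may far exceed $\ell$ (it can be as large as $\|y\|^{1/2}$ even on the good event of Lemma~\ref{lem:next}). Consequently the level-$\ell$ indicators $\1{\{R(v)\ge\ell\}}$ are not $O(\ell)$-dependent; to apply Proposition~\ref{prop:CN} you would need $L$ of order the displacement, and then the threshold $L^{d+1}\max\{1,Mp^{1/d}\}$ destroys the linear bound. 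So the concluding estimate $\sum_{v\in\gamma_\mathcal{O}(y)}R(v)\le c'\,T(0,y)$ off a rare event is unsubstantiated, and you yourself flag this single-summand step as the hard part and defer it; as it stands the proposal is not a complete proof.

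The paper closes exactly this gap by an extra comparison that your sketch is missing: it keeps the skip time $T_z^0(\underline z,\overline z)$ as the bound and splits according to whether it exceeds $A(\|\underline z-z\|+\|z-\overline z\|)$. The sub-threshold part telescopes against $T(0,y)$ (your $c\sum_i\tau(x_i,x_{i+1})$ term), while for the super-threshold part the defining event at level $\ell$ forces both endpoints into $B(z,\ell/A)$, so the indicators $X_\ell(z)$ used in Lemma~\ref{lem:longwander} are genuinely $4\ell$-dependent with $p_\ell\le(2\ell+1)^{2d}\phi_0(\ell)$, and Proposition~\ref{prop:CN} applies level by level; extremely large skip times are removed beforehand via a uniform second-moment bound along the skeleton (Lemmata~\ref{lem:skip} and \ref{lem:skip_negl}). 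If you want to salvage your decomposition, you must insert precisely this comparison between the error and the local displacement before invoking Proposition~\ref{prop:CN}; without it the argument does not go through.
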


\begin{prop}\label{prop:influ_lower}
There exists a constant $\Cl{influ_lower}>0$ (which depends only on $d$ and $r_0$) such that if $\|y\|$ is large enough and $N \geq C_0\|y\|$, then for all $r \in [r_0,1]$,
\begin{align*}
	\sum_{z \in B(0,N) \setminus \{0,y\}}
	\bE_r\mleft[ T_z^0(0,y)-T_z^1(0,y) \middle| \omega(0)=1 \mright]
	\geq \Cr{influ_lower}\|y\|.
\end{align*}
\end{prop}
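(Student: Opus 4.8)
The plan is to prove Proposition~\ref{prop:influ_lower} by reducing it to a lower bound on the expected number of \emph{essential} frogs lying on an optimal path. For $z\in\Z^d\setminus\{0\}$ put
\begin{gather*}
	D_z:=T(0,y,\langle\omega_z^0\rangle,\S)-T(0,y,\langle\omega_z^1\rangle,\S),
\end{gather*}
which on $\{\omega(0)=1\}$ coincides with $T_z^0(0,y)-T_z^1(0,y)$, is a non-negative integer, and depends neither on $\omega(0)$ nor on $\omega(z)$. Since $D_z\ge D_z\1{\{\omega(z)=1\}}\ge\1{\{\omega(z)=1,\,D_z\ge1\}}$, summing over $z$ and using the independence from $\omega(0)$ yields
\begin{gather*}
	\sum_{z\in B(0,N)\setminus\{0,y\}}\bE_r\mleft[D_z\,\middle|\,\omega(0)=1\mright]\ \ge\ \bE_r\mleft[\,\#\mathcal{Z}\,\mright],\\
	\mathcal{Z}:=\bigl\{z\in B(0,N)\setminus\{0,y\}:\omega(z)=1,\ D_z\ge1\bigr\}.
\end{gather*}
If $\omega(z)=1$ then $\langle\omega_z^1\rangle=\langle\omega\rangle$, so $z\in\mathcal{Z}$ precisely when deleting the frog at $z$ strictly increases $T(0,y,\langle\omega\rangle,\S)$; such a $z$ then lies on every element of $\mathcal{O}(y)$, hence on $\gamma_\mathcal{O}(y)$, and it lies in $B(0,N)$ automatically on the event $\{T(0,y)\le C_0\|y\|\}$, whose $\bP_r(\cdot\mid\omega(0)=1)$-probability tends to $1$ (Lemma~\ref{lem:AP}). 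Hence it suffices to prove that, uniformly in $r\in[r_0,1]$ and for $\|y\|$ large, $\gamma_\mathcal{O}(y)$ carries at least $c\|y\|$ essential frogs with probability bounded below.

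For this I would use a slab construction. By the symmetries of the model we may assume $y_1\ge\|y\|/d>0$. Fix a constant $R_0$ and, for $k=1,\dots,\lfloor y_1/(3R_0)\rfloor$, set $\Lambda_k:=\{x\in\Z^d:|x_1-3kR_0|\le R_0\}$; the slabs $\Lambda_k$ are pairwise disjoint and each is crossed by the underlying nearest-neighbour trajectory of $\gamma_\mathcal{O}(y)$ (the concatenation of the random-walk excursions realising its hops), since that trajectory joins $0$ to $y$. Call $\Lambda_k$ \emph{productive} if the restriction of $(\omega,\S)$ to a neighbourhood of $\Lambda_k$ is such that $\gamma_\mathcal{O}(y)$ relays the baton across $\Lambda_k$ through some site $z_k\in\Lambda_k$ and every relay across $\Lambda_k$ avoiding the frog at $z_k$ costs at least one more unit of time. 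On $\{\Lambda_k\text{ productive}\}$ the frog at $z_k$ is essential — deleting it forces a strictly costlier crossing of $\Lambda_k$, raising $T(0,y,\langle\omega\rangle,\S)$ by at least one — so $\#\mathcal{Z}\ge\#\{k:\Lambda_k\text{ productive}\}$, and the proof is complete once we know that at least $c\|y\|$ slabs are productive with probability $\ge\tfrac12$ (this is the analogue of the paper's Proposition~\ref{prop:delay}). I would obtain that statement by revealing $(\omega,\S)$ outside $\Lambda_k$, checking that the fresh configuration inside $\Lambda_k$ is of productive type with probability bounded below uniformly in $r$, and then applying Proposition~\ref{prop:CN} twice: to the finite-range indicator family $\bigl(\1{\{\Lambda_k\text{ productive}\}}\bigr)_k$, so as to turn this density into a genuine linear count; and to the number of long hops of $\gamma_\mathcal{O}(y)$ (bounded via Lemma~\ref{lem:next}), so as to ensure that $\gamma_\mathcal{O}(y)$ indeed meets a linear number of the slabs.

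The main obstacle is the productivity estimate, and within it two entangled points. First, $\gamma_\mathcal{O}(y)$ depends on the whole environment, so one cannot naively condition on a favourable configuration inside $\Lambda_k$ while retaining control of where and how $\gamma_\mathcal{O}(y)$ crosses $\Lambda_k$; the decoupling must reveal $(\omega,\S)$ off $\Lambda_k$, record the entrance and exit structure of the crossing (including the possibility of several crossings), and only then exhibit a configuration in $\Lambda_k$ that simultaneously forces a one-site crossing and penalises every re-route. Second, that such a configuration occurs with probability bounded below uniformly in $r\in[r_0,1]$ rests on the first passage time strictly exceeding the $\ell^1$-norm with high probability \emph{uniformly} over $r\in[r_0,1]$ — the extension, carried out in Section~\ref{subsect:Nakajima}, of \cite[Lemma~5.2]{Kub20}, which only treats small $r$; the case $r=1$ is the delicate one, since then there are no empty sites at all and the ``slack'' making a crossing costly must come entirely from the excursions of the simple random walks. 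Throughout, Lemmas~\ref{lem:AP} and~\ref{lem:next} and Proposition~\ref{prop:CN} are the routine tools that restrict the argument to the high-probability regime in which $T(0,y)\le C_0\|y\|$ and every hop of $\gamma_\mathcal{O}(y)$ is shorter than $\|y\|^{1/2}$.
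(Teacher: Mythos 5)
Your opening reduction---bounding each influence from below by the indicator that $z$ is pivotal, i.e.\ lies on every element of $\mathcal{O}(y)$---is essentially the paper's first step and is fine. The genuine gap is everything after it. The statement you reduce to (with probability bounded below, at least $c\|y\|$ slabs are ``productive'', and a productive slab yields a pivotal frog) is exactly the hard part, and as formulated it does not go through. Pivotality of $z_k$ cannot be certified by the configuration in a neighbourhood of $\Lambda_k$: a competing chain avoiding the frog at $z_k$ may cross $\Lambda_k$ using the walk of a frog whose site lies far outside the slab (consecutive sites of a chain need not be near the slab even when the walk between them traverses it), and whether such a re-route is strictly slower depends on slack in the rest of the environment, since $T(0,y)$ is a global minimum and does not decompose additively over slab crossings. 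Hence ``every relay across $\Lambda_k$ avoiding the frog at $z_k$ costs at least one more unit of time'' is not an event measurable with respect to a neighbourhood of $\Lambda_k$; making it measurable destroys precisely the finite-range structure you need both for the uniform-in-$r$ lower bound after ``revealing the outside'' and for the application of Proposition~\ref{prop:CN}. You name this obstacle but do not resolve it, and you likewise defer the second pillar, the uniform comparison $P\bigl(T(0,\mathcal{D}_L(0))=L\bigr)\le e^{-cL}$ (the paper's Proposition~\ref{prop:Nakajima}, whose proof for $r=1$ needs the induction $q_{An}\le\tfrac12(2q_A)^n$ together with $2q_3<1$). With both pillars missing, the proposal is a plan rather than a proof.

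The paper circumvents the local-to-global obstruction with a device absent from your sketch: a resampling argument. It only requires the much weaker ``delaying'' event $\mathcal{E}(L,y,z)$ --- $z$ lies on the chosen optimal chain, $T(z,\mathcal{D}_L(z))>L$, and the next chain point beyond distance $L$ is within $2L$ --- and then resamples the walk attached to $z$ so that it runs straight to $\overline{z}^{(L)}$, an event of probability at least $(2d)^{-2L}$. In the resampled, equal-in-law environment every optimal chain must pass through $z$, because a chain avoiding $z$ would give the contradiction $T^*(0,y)\le T(0,z)+\tau^*(z,\overline{z}^{(L)})+T(\overline{z}^{(L)},y)<T(0,y)\le T^*(0,y)$. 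This converts slack into pivotality without ever verifying that every alternative crossing is slow in the original configuration; the linear count of delaying events is then Proposition~\ref{prop:delay}, proved from Propositions~\ref{prop:Nakajima} and \ref{prop:CN}. To salvage your slab scheme you would, in effect, have to rediscover this resampling step (or another way of certifying pivotality from local data), and separately prove the uniform strict comparison between the passage time and the $\ell^1$-norm that you currently assume.
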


The proofs of these propositions are postponed until Sections~\ref{subsect:influ_upper} and \ref{subsect:influ_lower}, and we continue with the proof of \eqref{eq:derivative}.
Fix $y \in \Z^d \setminus \{0\}$ and $N \geq C_0\|y\|$.
Since $E[U_0^1(y,N)]$ depends only on the initial configuration in $B(0,N)$, we can use Russo's formula to obtain for all $r \in [r_0,1]$,
\begin{align*}
	-\frac{d}{dr}\bE_r[U_0^1(y,N)]
	&= \sum_{z \in \Z^d} \E_r\bigl[ E[U_z^0(y,N)]-E[U_z^1(y,N)] \bigr]\\
	&= \sum_{z \in B(0,N) \setminus \{0,y\}} \bE_r\mleft[ U_z^0(y,N)-U_z^1(y,N) \mright].
\end{align*}
Here we used the fact that $U_z^0(y,N)=U_z^1(y,N)$ holds for all $z \in B(0,N)^c \cup \{0,y\}$ in the last equality.
On the other hand, Lemma~\ref{lem:comparable} tells us that for each $r \in [r_0,1]$,
\begin{align*}
	&\sum_{z \in B(0,N) \setminus \{0,y\}}
		\Bigl| \bE_r\mleft[ T_z^0(0,y)-T_z^1(0,y) \middle| \omega(0)=1 \mright]-\bE_r\mleft[ U_z^0(y,N)-U_z^1(y,N) \mright] \Bigr|\\
	&\leq 2(2N+1)^d\Cr{phi:comp}(N)\|y\|.
\end{align*}
We now assume that $\|y\|$ and $N \geq C_0\|y\|$ are large enough to establish Propositions~\ref{prop:influ_upper} and \ref{prop:influ_lower} and to have $2(2N+1)^d\Cr{phi:comp}(N) \leq \Cr{influ_lower}/2$, respectively.
Then, the above observations, combined with Propositions~\ref{prop:influ_upper} and \ref{prop:influ_lower}, imply that for all $r \in [r_0,1]$,
\begin{align*}
	\frac{\Cr{influ_lower}}{2}\|y\|
	\leq -\frac{d}{dr}\bE_r[U_0^1(y,N)]
	\leq (\Cr{influ_upper}+\Cr{influ_lower})\|y\|,
\end{align*}
and \eqref{eq:derivative} follows by taking $A_1:=\Cr{influ_lower}/2$ and $A_2:=\Cr{influ_upper}+\Cr{influ_lower}$.
\end{proof}

Corollary~\ref{cor:Lipschitz_shape} is a direct consequence of Theorem~\ref{thm:Lipschitz} and its proof is completely the same as those of \cite[Corollary~{1.2}]{Dem21} and \cite[Corollary~3]{KubTak22}.
However, we give the proof of the corollary for completeness since it is not so long.

\begin{proof}[\bf Proof of Corollary~\ref{cor:Lipschitz_shape}]
Assume that $r_0 \leq p<q \leq 1$.
Note that
\begin{align*}
	d_\mathcal{H}(\mathcal{B}_p,\mathcal{B}_q)=\inf\{ \delta>0 : \text{$\mathcal{B}_q \subset  N_{\delta} (\mathcal{B}_p)$} \},
\end{align*}
since $\mathcal{B}_p \subset \mathcal{B}_q$ holds by the monotonicity of the time constant in the parameter of the Bernoulli distribution (see below \eqref{eq:timeconst}).
Set $\delta_-:=A_1(q-p)/(2\mu_{r_0}(\xi_1)^2)$.
Then, \eqref{eq:tc_bound}, Theorem~\ref{thm:Lipschitz} and the monotonicity imply that for all $y \in \R^d$ with $\mu_p(y)=1$ and $x \in B(y,\delta_-)$,
\begin{align*}
	\mu_q(x)
	&\leq \mu_q(x-y)+\mu_q(y)
		\leq \mu_{r_0}(\xi_1)\delta_-+\mu_p(y)-A_1(q-p)\|y\|\\
	&\leq 1+\frac{A_1}{2}(q-p)\frac{1}{\mu_{r_0}(\xi_1)}-A_1(q-p)\frac{1}{\mu_p(\xi_1)}\\
	&\leq 1-\frac{A_1}{2\mu_{r_0}(\xi_1)}(q-p)<1.
\end{align*}
This implies that the boundary of $\mathcal{B}_q$ is not included in $N_\delta(\mathcal{B}_p)$ for all $\delta \in (0,\delta_0]$, and hence
\begin{align*}
	d_\mathcal{H}(\mathcal{B}_p, \mathcal{B}_q)
	\geq \delta_-=\frac{A_1}{2\mu_{r_0}(\xi_1)^2}(q-p).
\end{align*}
This is the desired lower bound for the Hausdorff distance since $A_1$ and $\mu_{r_0}(\xi_1)$ depend only on $d$ and $r_0$.
On the other hand, for any $z \in \R^d$ with $\mu_q(z)=1$, let $\xi_z:=z/\|z\|$ and $y_z:=\xi_z/\mu_p(\xi_z)$.
Note that $\|\xi_z\|=1$ and $\mu_p(y_z)=1$ (and $y_z \in \mathcal{B}_p$ as well).
Hence, Theorem~\ref{thm:Lipschitz} proves that
\begin{align*}
	\|y_z-z\|
	&= \biggl\| \frac{\xi_z}{\mu_p(\xi_z)}\mu_p(y_z)-\frac{\xi_z}{\mu_q(\xi_z)}\mu_q(z) \biggr\|\\
	&\leq \sup_{\|\xi\|=1}\frac{\mu_p(\xi)-\mu_q(\xi)}{\mu_p(\xi)\mu_q(\xi)}
		\leq A_2(q-p)=:\delta_+.
\end{align*}
This means that for any $z \in \R^d$ with $\mu_q(z)=1$, there exists $y \in \mathcal{B}_p$ such that $\|y-z\| \leq \delta_+$, and it is easy to see that $\mathcal{B}_q \subset N_{\delta_+}(\mathcal{B}_p)$ holds.
Therefore, we obtain
\begin{align*}
	d_{\mathcal{H}}(\mathcal{B}_p,\mathcal{B}_q) \leq \delta_+=A_2(q-p),
\end{align*}
and the desired upper bound for the Hausdorff distance follows since $A_2$ also depends only on $d$ and $r_0$.
The strict monotonicity of the asymptotic shape is a direct consequence of the first assertion since $r_0 \in (0,1)$ is arbitrary, and the proof is complete.
\end{proof}

\section{Upper and lower bounds for sums of influences}\label{sect:influence}
In this section, we prove Propositions~\ref{prop:influ_upper} and \ref{prop:influ_lower}.
The task is here to check that if $\|y\|$ and $N$ are sufficiently large, then for any $r \in [r_0,1]$, the sum of influences
\begin{align*}
	\sum_{z \in B(0,N) \setminus \{0,y\}} \bE_r\mleft[ T_z^0(0,y)-T_z^1(0,y) \middle| \omega(0)=1 \mright]
\end{align*}
is bounded from above and below by multiples of $\|y\|$.
To do this, we interpret the quantity $T_z^0(0,y)-T_z^1(0,y)$ as the delay time caused by the absence of the frog on site $z$.
Intuitively, it is expected that one frog does not have a great impact on the propagation of active frogs, and this suggests that the sum of influences is controllable from above.
In Section~\ref{subsect:influ_upper}, we actually carry out this strategy and derive the desired upper bound for the sum of influences (Propositions~\ref{prop:influ_upper}).
On the other hand, it is easily seen that $T_z^0(0,y)-T_z^1(0,y) \geq 1$ holds if the frog sitting on site $z$ decisively contributes to the first passage time from $0$ to $y$.
Hence, in Section~\ref{subsect:influ_lower}, we count the number of such frogs and estimate the sum of influences from below (Proposition~\ref{prop:influ_lower}).

\subsection{Proof of Proposition~\ref{prop:influ_upper}}\label{subsect:influ_upper}
We begin by preparing some notation and lemmata.
Recall from above Lemma~\ref{lem:next} that for each $y \in \Z^d \setminus \{0\}$, $\gamma_\mathcal{O}(y)$ is a specific element of $\mathcal{O}(y)$ chosen with a deterministic rule, and for each $z \in \gamma_\mathcal{O}(y)$, $\bar{z}$ is the next point of $z$ in $\gamma_\mathcal{O}(y)$ (if any).
In addition, let $\underline{z}$ denote the previous point of $z$ in $\gamma_\mathcal{O}(y)$ (if any).
When $z \in \tilde{\gamma}_\mathcal{O}(y):=\gamma_\mathcal{O}(y) \setminus \{0,y\}$, both $\underline{z}$ and $\overline{z}$ actually exist and we can consider the quantity $T_z^0(\underline{z},\overline{z})$ (which is the first passage time from $\underline{z}$ to $\overline{z}$ in the case where the use of the frog sitting on $z$ is prohibited).
The following lemmata provide some estimates for $T_z^0(\underline{z},\overline{z})$, and those play a key role in the proof of Proposition~\ref{prop:influ_upper}.

\begin{lem}\label{lem:skip}
There exists a constant $\Cl{skip}>0$ (which depends only on $d$ and $r_0$) such that for all $y,z \in \Z^d \setminus \{0\}$,
\begin{align*}
	\sup_{r_0 \leq r \leq 1} \bE_r\mleft[ T_z^0(\underline{z},\overline{z})^2 \1{\{ z \in \tilde{\gamma}_\mathcal{O}(y) \}} \middle| \omega(0)=1 \mright]
	\leq \Cr{skip}.
\end{align*}
\end{lem}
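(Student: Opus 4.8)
\textbf{Proof plan for Lemma~\ref{lem:skip}.}

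The plan is to bound $T_z^0(\underline z,\overline z)$ in terms of quantities whose moments are already controlled by Lemma~\ref{lem:AP} and Lemma~\ref{lem:next}. The starting point is the triangle inequality \eqref{eq:triangle}: on the event $\{z\in\tilde\gamma_\mathcal{O}(y)\}$, since $\underline z$, $z$, $\overline z$ lie consecutively on the optimal path $\gamma_\mathcal{O}(y)$ and the frog on $z$ is the one carrying the baton from $\underline z$ to $\overline z$ in the true configuration, one has in the true configuration $T(\underline z,z)=\tau(\underline z,z)$ and $T(z,\overline z)=\tau(z,\overline z)$, and by Lemma~\ref{lem:next} (applied with translated origin, after suitable conditioning) both of these are stretched-exponentially integrable. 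Now in the modified configuration $\omega_z^0$, we route a path from $\underline z$ to $\overline z$ by first going $\underline z\to 0$, then $0\to\overline z$, using the configuration $\langle\omega_z^0\rangle$; this gives
\begin{align*}
	T_z^0(\underline z,\overline z) \leq T_z^0(\underline z,0)+T_z^0(0,\overline z).
\end{align*}
The second term is exactly of the form controlled by Lemma~\ref{lem:AP} (first passage from $0$, frog on $z$ forbidden, conditioned on $\omega(0)=1$). The first term is $T_z^0(0,\underline z)$ by symmetry of the construction only if the model were symmetric, which it is not; instead I would bound $T_z^0(\underline z,0)\leq T_z^0(\underline z,\overline z')+\cdots$ — better, simply use $T_z^0(\underline z,0)\le T^{(\underline z)}(\underline z,0)$ where $T^{(\underline z)}$ forbids the frog on $\underline z$ as well, and observe this has the same law as $T_z^0(0,\cdot)$-type quantities after translating by $\underline z$; the point is that $\|\underline z\|\le T(0,\underline z)\le T(0,y)$, so its size is controlled by $\|y\|$ plus fluctuations, and one applies Lemma~\ref{lem:AP} with $y$ replaced by a point at distance $\|\underline z\|$.

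More carefully, the cleaner route: condition on the geometry. Write $\mathcal{E}$ for the $\sigma$-algebra generated by $(\omega(x))_{x\neq z}$ and $\S$. On $\{z\in\tilde\gamma_\mathcal{O}(y)\}$ the points $\underline z,\overline z$ and the values $\|\underline z\|,\|\overline z\|$ are $\mathcal{E}$-measurable, and $T_z^0(\underline z,\overline z)$ is $\mathcal{E}$-measurable (it does not use the frog on $z$). Thus it suffices to bound $\bE_r[T_z^0(\underline z,\overline z)^2\1{\{z\in\tilde\gamma_\mathcal{O}(y)\}}\mid\omega(0)=1]$ by decomposing over dyadic scales of $\|\underline z\|+\|\overline z\|$: on the event that this sum lies in $[2^{j},2^{j+1})$, we use $T_z^0(\underline z,\overline z)\leq T(\underline z,\overline z)\cdot\1{\{z\notin\tilde\gamma\}}+(\text{detour bound})$. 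Actually the genuinely efficient estimate is $T_z^0(\underline z,\overline z)\leq \tau(\underline z,\overline z')+\cdots$ — here is the real mechanism: in $\omega_z^0$, the frog on $\underline z$ is still present (since $\underline z\neq z$ on this event), so $T_z^0(\underline z,\overline z)\leq \tau(\underline z,v)+T_z^0(v,\overline z)$ for any $v$ reachable from $\underline z$ by the frog on $\underline z$; choosing $v$ cleverly (e.g. a neighbour of $\underline z$ toward $\overline z$), or simply $T_z^0(\underline z,\overline z)\le T_z^0(\underline z,0)+T_z^0(0,\overline z)$ and bounding each summand via Lemma~\ref{lem:AP} with the $\|y\|$ there replaced by $\|\underline z\|\le \|y\|$ and $\|\overline z\|\le \|y\|+1$, gives a stretched-exponential tail \emph{uniformly in the deterministic data}, hence a uniform second-moment bound — but with a bound depending on $\|y\|$, not a constant. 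To remove the $\|y\|$-dependence one must exploit that $\underline z$ and $\overline z$ are \emph{adjacent on an optimal path}, so $T_z^0(\underline z,\overline z)$ should be comparable to a \emph{local} passage time, not a macroscopic one.

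This is the main obstacle, and I expect the resolution to be: bound $T_z^0(\underline z,\overline z)\leq \tau(\underline z,\overline z)+$ (something) is wrong since $\tau(\underline z,\overline z)$ uses the frog on $\underline z$, which exists — wait, that is fine! In $\omega_z^0$, the frog on $\underline z$ is untouched, so $T_z^0(\underline z,\overline z)\leq \tau(\underline z,\overline z,\omega_z^0(\underline z),S_\cdot(\underline z)) = \tau(\underline z,\overline z)$, the \emph{same} random variable as in the true configuration (it depends only on $\omega(\underline z)$ and $S_\cdot(\underline z)$, not on $\omega(z)$). Now the key is that $\tau(\underline z,\overline z)=T(\underline z,\overline z)$ is \emph{not} generally true, but we have the crude pointwise bound $T_z^0(\underline z,\overline z)\le \tau(\underline z,\overline z)$ valid always, and $\tau(\underline z,\overline z)$ is a single random walk hitting time. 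So the plan reduces to: show
\begin{align*}
	\sup_{r_0\le r\le 1}\ \bE_r\!\left[\tau(\underline z,\overline z)^2\,\1{\{z\in\tilde\gamma_\mathcal{O}(y)\}}\ \middle|\ \omega(0)=1\right]\le \Cr{skip}.
\end{align*}
Here $\tau(\underline z,\overline z)$, given that the frog on $\underline z$ is active (which it is, being on the optimal path), is the time for a simple random walk started at $\underline z$ to hit $\overline z$; crucially $\overline z$ is the \emph{next} point on $\gamma_\mathcal{O}(y)$, and by the second part of Lemma~\ref{lem:next}, $\tau(z',\bar{z'})<\|y\|^{1/2}$ for all $z'\in\gamma_\mathcal{O}(y)\setminus\{y\}$ except on an event of probability $\Cr{phi:range}(\|y\|)$. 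On that good event $\tau(\underline z,\overline z)<\|y\|^{1/2}$, contributing at most $\|y\|$ to the second moment — still $\|y\|$-dependent. The fix is to not fix $y$: instead sum/integrate over which point of the path $z$ is, using that the \emph{total} path length is $T(0,y)$ and each individual $\tau$ along it is, by Lemma~\ref{lem:next}, stretched-exponentially tight with a \emph{uniform} constant. Concretely, $\1{\{z\in\tilde\gamma_\mathcal{O}(y)\}}\tau(\underline z,\overline z)^2 \leq \sum_{(\underline z,z,\overline z)\text{ consecutive in some }\gamma_\mathcal{O}}\cdots$, and one uses a union bound over the at most $(2N+1)^d$-many relevant triples combined with the \emph{uniform} stretched-exponential tail $\bP_r(\tau(a,b)\geq t,\ \{a,b\}\subset\gamma_\mathcal{O}(y),\ b=\bar a\mid\omega(0)=1)\leq\Cr{phi:next}(t)$ from Lemma~\ref{lem:next} (first display) applied after translation — this last tail has \emph{no} $\|y\|$-dependence, so $\bE_r[\tau(\underline z,\overline z)^2\1{\{z\in\tilde\gamma\}}\mid\omega(0)=1]=\int_0^\infty 2t\,\bP_r(\tau(\underline z,\overline z)\geq t,\,z\in\tilde\gamma\mid\omega(0)=1)\,dt\leq\int_0^\infty 2t\,\Cr{phi:next}(t)\,dt=:\Cr{skip}<\infty$, which is the asserted bound. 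The delicate point to get right in the write-up is precisely this last inequality: that $\{z\in\tilde\gamma_\mathcal{O}(y),\ \tau(\underline z,\overline z)\geq t\}$ entails $\{T(\underline z,\overline z)=\tau(\underline z,\overline z)\geq t \text{ with } \underline z,\overline z \text{ consecutive on } \gamma_\mathcal{O}(y)\}$, hence is dominated by the translated event in Lemma~\ref{lem:next}, uniformly in $z$ and $y$ and $r$.
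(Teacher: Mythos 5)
Your final reduction contains a genuine gap. The pointwise bound $T_z^0(\underline z,\overline z)\le\tau(\underline z,\overline z)$ is correct (the one-hop path is admissible and $\omega_z^0(\underline z)=\omega(\underline z)$), but it discards exactly the mechanism that makes the left side integrable: $\tau(\underline z,\overline z)$ is the hitting time of a single site by a single simple random walk, which is $+\infty$ with positive probability when $d\ge3$ and has infinite mean when $d=2$, so no unconditional moment bound for it exists. You try to rescue this by intersecting with $\{z\in\tilde\gamma_\mathcal{O}(y)\}$ and invoking Lemma~\ref{lem:next}, but the concluding claim — that $\{z\in\tilde\gamma_\mathcal{O}(y),\ \tau(\underline z,\overline z)\ge t\}$ is contained in an event of the form ``$T(a,\bar a)=\tau(a,\bar a)\ge t$ for consecutive points of $\gamma_\mathcal{O}(y)$'' — is false: $\underline z$ and $\overline z$ are \emph{not} consecutive on $\gamma_\mathcal{O}(y)$ (the site $z$ sits between them), and there is no reason why $T(\underline z,\overline z)=\tau(\underline z,\overline z)$ on this event; typically the optimal route from $\underline z$ to $\overline z$ goes through the frog at $z$ and is strictly faster than the direct hop, so membership of $z$ in the path gives no control whatsoever on $\tau(\underline z,\overline z)$. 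Hence the final display defining your $\Cr{skip}$ does not follow, and the approach as stated cannot be repaired by a tail bound on $\tau(\underline z,\overline z)$.

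You did correctly identify the real obstacle (a bound through the origin gives only a $\|y\|$-dependent constant, and one must exploit the local geometry of the path), but the paper resolves it differently: it keeps $T_z^0(\underline z,\overline z)$ — the passage time that may use \emph{all} frogs except the one at $z$ — and decomposes over the possible values $(\underline z,\overline z)=(w_1,w_2)$ via Cauchy--Schwarz. Lemma~\ref{lem:AP} (after translation) gives $\bE_r[T_z^0(w_1,w_2)^4\mid\omega(0)=1]\le c\|w_1-w_2\|^4$, a polynomially growing bound, while Lemma~\ref{lem:next} is applied to the genuinely consecutive pairs $(w_1,z)$ and $(z,w_2)$: on $\{\underline z=w_1\}$ one has $T(w_1,z)=\tau(w_1,z)\ge\|w_1-z\|$, so the probability that $\underline z=w_1$ and $\overline z=w_2$ decays stretched-exponentially in $\|w_1-z\|$ and $\|w_2-z\|$. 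The product (polynomial moment) $\times$ (stretched-exponential probability)$^{1/2}$ is summable over $w_1,w_2$, yielding a constant depending only on $d$ and $r_0$. In short: the localization must come from the stretched-exponential tails of the displacements $\|\underline z-z\|$, $\|z-\overline z\|$ furnished by Lemma~\ref{lem:next}, combined with moment bounds for the \emph{many-frog} passage time $T_z^0(w_1,w_2)$, not from a bound on the single-frog hitting time $\tau(\underline z,\overline z)$.
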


\begin{lem}\label{lem:skip_negl}
There exists $\Cl[K]{phi:skip} \in \mathcal{K}(d,r_0)$ such that for all $y \in \Z^d \setminus \{0\}$,
\begin{align*}
	\sup_{r_0 \leq r \leq 1}\bE_r\biggl[ \1{\mathcal{E}(y)^c} \sum_{z \in \tilde{\gamma}_\mathcal{O}(y)}
	T_z^0(\underline{z},\overline{z}) \bigg| \omega(0)=1 \biggr]
	\leq \Cr{phi:skip}(\|y\|),
\end{align*}
where
\begin{align*}
	\mathcal{E}(y):=\mleft\{
		T(0,y)<C_0\|y\| \text{ and }
		T_w^0(\underline{w},\overline{w})<2C_0\|y\|^{1/(2d+6)} \text{ for all } w \in \tilde{\gamma}_\mathcal{O}(y) \mright\}.
\end{align*}
\end{lem}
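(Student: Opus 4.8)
The goal is to bound the contribution to $\sum_{z \in \tilde\gamma_\mathcal{O}(y)} T_z^0(\underline z,\overline z)$ coming from the bad event $\mathcal{E}(y)^c$, where $\mathcal{E}(y)$ asks that the first passage time be at most linear and that each ``skip cost'' $T_w^0(\underline w,\overline w)$ be at most $2C_0\|y\|^{1/(2d+6)}$. The natural approach is Cauchy--Schwarz in the form
\begin{align*}
	\bE_r\biggl[ \1{\mathcal{E}(y)^c} \sum_{z \in \tilde\gamma_\mathcal{O}(y)} T_z^0(\underline z,\overline z) \,\bigg|\, \omega(0)=1 \biggr]
	\leq \bP_r(\mathcal{E}(y)^c \mid \omega(0)=1)^{1/2}\,
		\bE_r\biggl[ \Bigl( \sum_{z \in \tilde\gamma_\mathcal{O}(y)} T_z^0(\underline z,\overline z) \Bigr)^2 \,\bigg|\, \omega(0)=1 \biggr]^{1/2},
\end{align*}
so that it suffices to (i) show $\bP_r(\mathcal{E}(y)^c \mid \omega(0)=1)$ is stretched-exponentially small in $\|y\|$, uniformly in $r \in [r_0,1]$, and (ii) show the second moment of $\sum_{z} T_z^0(\underline z,\overline z)$ grows at most polynomially in $\|y\|$. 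Multiplying a stretched-exponential bound by a polynomial one still yields an element of $\mathcal{K}(d,r_0)$, which gives the claim.

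For step (i), I would split $\mathcal{E}(y)^c$ into the two defining failures. The event $\{T(0,y) \geq C_0\|y\|\}$ has probability at most $\phi_0(C_0\|y\|)$ by Lemma~\ref{lem:AP} (taking $z$ irrelevant, or directly from the $\P_{r_0}$ bound in its proof). For the event that some $w \in \tilde\gamma_\mathcal{O}(y)$ has $T_w^0(\underline w,\overline w) \geq 2C_0\|y\|^{1/(2d+6)}$, I would first restrict to the good event $\{\tau(z,\bar z) < \|y\|^{1/2} \text{ for all } z \in \gamma_\mathcal{O}(y)\setminus\{y\}\}$, whose complement has probability at most $\Cr{phi:range}(\|y\|)$ by Lemma~\ref{lem:next}; on that event the optimal path has at most $\|y\|^{1/2} \cdot (\text{length}) $ controlled, hence $\tilde\gamma_\mathcal{O}(y)$ is a subset of $B(0,C_0\|y\|)$ with at most polynomially many points. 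Then a union bound over $w$ in that ball reduces matters to bounding $\bP_r(T_w^0(\underline w,\overline w) \geq 2C_0\|y\|^{1/(2d+6)})$ for a single $w$; since $\underline w,\overline w$ are neighbors on the path, $\|\underline w - \overline w\|$ is small, and Lemma~\ref{lem:AP} (applied with the pair $(\underline w,\overline w)$ after translating, and absorbing the mandatory absence at $w$) gives a bound $\phi_0(2C_0\|y\|^{1/(2d+6)})$, which beats the polynomial count since stretched exponentials decay faster than any power. Summing the three pieces gives (i).

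For step (ii), expand the square as $\sum_{z,z'} T_z^0(\underline z,\overline z) T_{z'}^0(\underline{z'},\overline{z'})$, use Cauchy--Schwarz pointwise to bound each cross term by $\tfrac12(T_z^0(\underline z,\overline z)^2 + T_{z'}^0(\underline{z'},\overline{z'})^2)$, and then bound the number of pairs by $(\text{card } \tilde\gamma_\mathcal{O}(y))^2$. The cardinality of $\tilde\gamma_\mathcal{O}(y)$ is itself at most polynomial in $\|y\|$ up to a stretched-exponentially small event (again by Lemma~\ref{lem:next}, since off the bad range event the path stays in $B(0,C_0\|y\|)$ and consists of distinct points), and on that bad event one can crudely dominate everything by $T(0,y)$-type quantities with finite moments via Lemma~\ref{lem:AP}. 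Each single term $\bE_r[T_z^0(\underline z,\overline z)^2 \1{\{z \in \tilde\gamma_\mathcal{O}(y)\}} \mid \omega(0)=1] \leq \Cr{skip}$ is exactly Lemma~\ref{lem:skip}. Summing over the at-most-polynomially-many $z$ gives a polynomial-in-$\|y\|$ bound for the second moment.

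The main obstacle I anticipate is the bookkeeping in (i): one must carefully handle the conditioning on $\omega(0)=1$ together with the mandatory absence $\omega_z^0$ built into $T_z^0(\underline z,\overline z)$, and justify that $\tilde\gamma_\mathcal{O}(y)$ lies in a ball of polynomial size with stretched-exponential error uniformly in $r$. This is precisely where one leans on the uniform bounds of Lemmata~\ref{lem:AP} and \ref{lem:next}; once those are in place, combining a stretched-exponential factor with a polynomial factor to land back in $\mathcal{K}(d,r_0)$ is routine. I expect the proof to essentially mirror \cite[Lemmata~4.3 and 4.4]{Kub20}, as the paragraph before Lemma~\ref{lem:next} already suggests.
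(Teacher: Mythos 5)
There is a genuine gap in your step (i), at the point where you bound the probability that some $w\in\tilde{\gamma}_\mathcal{O}(y)$ has $T_w^0(\underline{w},\overline{w})\ge 2C_0\|y\|^{1/(2d+6)}$. You restrict to the event from Lemma~\ref{lem:next} that every step of $\gamma_\mathcal{O}(y)$ satisfies $\tau(z,\bar z)<\|y\|^{1/2}$, which only gives $\|\underline{w}-\overline{w}\|<2\|y\|^{1/2}$, and you then invoke Lemma~\ref{lem:AP} at the threshold $t=2C_0\|y\|^{1/(2d+6)}$. But Lemma~\ref{lem:AP} only applies when $t\ge C_0\|x_2-x_1\|$ for the displacement in question, and since $1/(2d+6)<1/2$ this hypothesis fails: when $\|\underline{w}-\overline{w}\|$ is, say, of order $\|y\|^{1/4}$ (allowed by your restriction), the event $\{T_w^0(\underline{w},\overline{w})\ge 2C_0\|y\|^{1/(2d+6)}\}$ is not rare at all --- it holds deterministically for large $\|y\|$ because $T_w^0(\underline{w},\overline{w})\ge\|\underline{w}-\overline{w}\|$. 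So the per-term bound $\phi_0(2C_0\|y\|^{1/(2d+6)})$ in your union bound is not justified. The fix is to restrict the jump lengths at the scale $\|y\|^{1/(2d+6)}$ rather than $\|y\|^{1/2}$: the sitewise estimate $\bP_r(w\in\tilde{\gamma}_\mathcal{O}(y),\,\underline{w}=x\mid\omega(0)=1)$ decaying stretched-exponentially in $\|x-w\|$ (the argument behind Lemma~\ref{lem:next}, used again in the proof of Lemma~\ref{lem:skip}) shows that the event $\{w\in\tilde{\gamma}_\mathcal{O}(y),\ \|\underline{w}-w\|\vee\|w-\overline{w}\|>\|y\|^{1/(2d+6)}\}$ has stretched-exponentially small probability, and on its complement the hypothesis of Lemma~\ref{lem:AP} at $t=2C_0\|y\|^{1/(2d+6)}$ is satisfied. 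This is exactly what the paper does.

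Apart from this, your route matches the paper's in spirit (split the bad event into the two failure modes; kill the first with the tail of Lemma~\ref{lem:AP}; kill the second by a jump-length restriction plus union bound; recombine with the second moments of Lemma~\ref{lem:skip}), with one organizational difference: the paper does not take the global Cauchy--Schwarz $\bE_r[\mathbf{1}_{\mathcal{E}(y)^c}\sum_z T_z^0(\underline{z},\overline{z})]\le\bP_r(\mathcal{E}(y)^c)^{1/2}\,\bE_r[(\sum_z T_z^0(\underline{z},\overline{z}))^2]^{1/2}$, but instead applies Cauchy--Schwarz sitewise against Lemma~\ref{lem:skip}, restricting the $z$-sum to $B(0,C_0\|y\|)$ on the second failure mode (legitimate there since $T(0,y)<C_0\|y\|$) and exploiting the extra decay from $\{T(0,y)\ge\|z\|\}$ for far $z$ on the first. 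Your step (ii), a polynomial bound on the second moment of the full random sum, is plausible but needs more than Lemma~\ref{lem:skip} as stated (e.g.\ a fourth-moment analogue or an extra Cauchy--Schwarz, together with the treatment of far-away $z$), so the sitewise version is the cleaner way to land the estimate.
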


\begin{lem}\label{lem:longwander}
There exist a constant $A>0$ (which depends only on $d$ and $r_0$) and $\Cl[K]{phi:long} \in \mathcal{K}(d,r_0)$ such that if $\|y\|$ is large enough, then
\begin{align*}
	\sup_{r_0 \leq r \leq 1} \bP_r\bigl( \mathcal{E}(y) \cap \mathcal{E}'(A,y)^c \big| \omega(0)=1 \bigr) \leq \Cr{phi:long}(\|y\|),
\end{align*}
where
\begin{align*}
	\mathcal{E}'(A,y):=
	\biggl\{ \sum_{z \in \tilde{\gamma}_\mathcal{O}(y)} T_z^0(\underline{z},\overline{z})
	\1{\{ T_z^0(\underline{z},\overline{z})>A(\|\underline{z}-z\|+\|z-\overline{z}\|) \}} \leq \|y\| \biggr\}.
\end{align*}
\end{lem}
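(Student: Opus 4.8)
The strategy is to express the event $\mathcal{E}'(A,y)^c$ as saying that a certain sum of contributions along $\gamma_\mathcal{O}(y)$ is large, and then to dominate that sum by a maximum over lattice paths of bounded length of a sum of $0/1$-valued, finitely dependent random variables, so that Proposition~\ref{prop:CN} applies. First I would observe that on $\mathcal{E}(y)$ the path $\gamma_\mathcal{O}(y)$ lies inside $B(0,C_0\|y\|)$ (since $T(0,y)<C_0\|y\|$ forces $\sum\|x_i-x_{i+1}\|\le T(0,y)<C_0\|y\|$) and, crucially, each overshoot time $T_w^0(\underline{w},\overline{w})$ is at most $2C_0\|y\|^{1/(2d+6)}$, so each edge jump $\|\underline z-z\|+\|z-\overline z\|$ is also at most that size. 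Thus on $\mathcal{E}(y)$ each summand in $\mathcal{E}'(A,y)$ only depends on the initial configuration and walks in a window of radius $O(\|y\|^{1/(2d+6)})$ around $z$.

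Next I would set up the covering. Partition $B(0,C_0\|y\|)$ into boxes of side length $L\sim \|y\|^{1/(2d+6)}$ (a multiple of the overshoot bound), and for each box $v$ let $X(v)=\1{}$ of the event ``there exists a site $z$ in box $v$ whose overshoot time $T_z^0(\underline z,\overline z)$ — computed in a localized way using only frogs/walks within distance $cL$ of $v$ — exceeds $A\cdot cL$''. By Lemmata~\ref{lem:skip} and~\ref{lem:next} (the tail bound on $\tau$ and on the localized first passage time), $\bP_r(X(v)=1)$ is at most $(\text{box volume})\times\phi(AcL)$, which for $A$ large can be made $\le (3L'+1)^{-d}$ where $L'$ is the dependence range; here the stretched-exponential decay of $\phi\in\mathcal{K}(d,r_0)$ beats the polynomial box volume. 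The family $(X(v))_v$ is $L'$-dependent with $L'=O(L)$. Then, on $\mathcal{E}(y)$, the path $\gamma_\mathcal{O}(y)$ visits a sequence of boxes forming an element of $\Pi_M$ with $M=O(C_0\|y\|/L)$, and
\[
\1{\mathcal{E}(y)}\sum_{z\in\tilde\gamma_\mathcal{O}(y)} T_z^0(\underline z,\overline z)\1{\{T_z^0(\underline z,\overline z)>A(\|\underline z-z\|+\|z-\overline z\|)\}}
\;\le\; (2C_0\|y\|^{1/(2d+6)})\,\max_{\pi\in\Pi_M}\sum_{v\in\pi}X(v),
\]
because each box can contribute at most its diameter's worth of overshoot after the localization and the number of qualifying $z$ in a marked box is bounded. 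Choosing the threshold $\|y\|$ on the right-hand side of $\mathcal{E}'(A,y)$ then corresponds to asking $\max_{\pi}\sum_v X(v)\ge c\|y\|^{1-1/(2d+6)}$, which for our $M$ and $p$ is well above the threshold $\Cr{CN}L'^{\,d+1}\max\{1,Mp^{1/d}\}$ in Proposition~\ref{prop:CN} (since $p$ is stretched-exponentially small in $L$). Proposition~\ref{prop:CN} then yields $\bP_r(\mathcal{E}(y)\cap\mathcal{E}'(A,y)^c)\le 2^d\exp\{-c\|y\|^{1-1/(2d+6)}/(16L')^d\}$, and a short computation shows the exponent is a negative power of $\|y\|$, giving a function in $\mathcal{K}(d,r_0)$; the supremum over $r\in[r_0,1]$ is handled by the coupling monotonicity (as in Lemma~\ref{lem:AP}) that lets all the tail inputs be taken uniform in $r$.

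The main obstacle I anticipate is the bookkeeping in the localization step: one must verify carefully that, on the event $\mathcal{E}(y)$, replacing the genuine overshoot time $T_z^0(\underline z,\overline z)$ (which a priori could depend on far-away frogs through a long optimal route) by a truly local quantity does not lose anything — i.e., that when $T_z^0(\underline z,\overline z)$ is small (as $\mathcal{E}(y)$ guarantees), the optimal route achieving it stays in a box of radius $O(\|y\|^{1/(2d+6)})$, so that $X(v)$ is genuinely measurable with respect to a local configuration and the $L'$-dependence holds. Getting the exponents to line up — the box side $\|y\|^{1/(2d+6)}$, the overshoot bound in $\mathcal{E}(y)$, the polynomial box volume versus the stretched-exponential tail, and the requirement $p\le(3L'+1)^{-d}$ of Proposition~\ref{prop:CN} — is the delicate part; the particular exponent $1/(2d+6)$ in the definition of $\mathcal{E}(y)$ is presumably chosen precisely so these constraints are simultaneously satisfiable, and I would reverse-engineer the choice of $L$ and $A$ from that.
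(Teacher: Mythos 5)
Your overall strategy---dominate the offending sum by a path-maximum of finitely dependent indicators and invoke Proposition~\ref{prop:CN}---is indeed the right one, but your single-scale, box-aggregated implementation breaks down at the domination step, in two ways. First, a site $z\in\tilde{\gamma}_\mathcal{O}(y)$ contributes to the sum defining $\mathcal{E}'(A,y)^c$ as soon as $T_z^0(\underline{z},\overline{z})>A(\|\underline{z}-z\|+\|z-\overline{z}\|)$, and since the jump $\|\underline{z}-z\|+\|z-\overline{z}\|$ can be as small as $2$, the qualifying threshold can be as low as $2A$ --- far below your trigger level $A\,cL\sim A\|y\|^{1/(2d+6)}$. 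Such sites (say, overshoot $3A$ with jump $2$) are invisible to your $X(v)$, so the left side of your displayed inequality can be of order $\|y\|$ while the right side is zero: the domination is false as stated. Second, even for sites whose overshoot does exceed $A\,cL$, nothing in $\mathcal{E}(y)$ bounds the number of points of $\gamma_\mathcal{O}(y)$ inside a single box of side $L$ that have this property; an indicator per box cannot record this multiplicity, and your factor $2C_0\|y\|^{1/(2d+6)}$ accounts for only one qualifying site per marked box. The assertion ``the number of qualifying $z$ in a marked box is bounded'' is precisely the missing ingredient, and it is not available at this level of aggregation.

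The paper's proof avoids both problems by stratifying over the exact value $\ell$ of the overshoot. On $\mathcal{E}(y)\cap\mathcal{E}'(A,y)^c$ every qualifying overshoot lies in $(A,2C_0\|y\|^{\delta})$ with $\delta=1/(2d+6)$, so one bounds $\|y\|$ by $\sum_{A<\ell<2C_0\|y\|^{\delta}}\ell\sum_{z\in\tilde{\gamma}_\mathcal{O}(y)}\1{\{T_z^0(\underline{z},\overline{z})=\ell,\;\|\underline{z}-z\|+\|z-\overline{z}\|<\ell/A\}}$, and dominates the inner indicator by the site-indexed variable $X_\ell(z)=\1{\{\exists w_1,w_2\in B(z,\ell/A)\text{ with }T_z^0(w_1,w_2)=\ell\}}$. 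This variable is automatically local (a passage time equal to $\ell$ can only use frogs within distance $\ell$ of the starting point), hence the family is $4\ell$-dependent, which disposes of the localization ``obstacle'' you flag without any extra argument. Proposition~\ref{prop:CN} is then applied at each scale $\ell$ separately, with the $\ell$-dependent threshold $\ell^{-3}\|y\|$ (summable against the weight $\ell$ since $\sum_{\ell>A}\ell^{-2}\le 1$); the probability $p_\ell$ is controlled by Lemma~\ref{lem:AP} (rather than Lemmata~\ref{lem:skip} and \ref{lem:next}) together with a choice of $A$ making $\phi_0(\ell)$ beat the required polynomial in $\ell$, and the restriction $\ell<2C_0\|y\|^{\delta}$ yields $\ell^{-3}\|y\|/(64\ell)^{d}\gtrsim\|y\|^{1/2}$, giving a stretched-exponential bound uniform in $r\in[r_0,1]$. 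If you wish to keep a coarse-grained picture, you would still need to stratify in $\ell$ and count sites rather than boxes, at which point you have essentially reproduced the paper's argument.
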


Before proving the lemmata above, let us complete the proof of Proposition~\ref{prop:influ_upper}.

\begin{proof}[\bf Proof of Proposition~\ref{prop:influ_upper}]
Assume that $\|y\|$ is sufficiently large (as required by Lemma~\ref{lem:longwander}) and let $N \geq C_0\|y\|$.
The following facts are immediate consequences of the definition of $\omega_z^s$ and the triangle inequality \eqref{eq:triangle}:
\begin{itemize}
	\item Both $T_z^0(0,y)$ and $T_z^1(0,y)$ are independent of $\omega(z)$.
	\item $T_z^0(0,y)=T_z^1(0,y)$ holds if there exists $\gamma \in \mathcal{O}(y,\omega_z^1,\S)$ such that $z \not\in \gamma$.
	\item We have $T_z^0(0,y)-T_z^1(0,y) \leq T_z^0(\underline{z},\overline{z})$ if $z \in \tilde{\gamma}_\mathcal{O}(y)$.
\end{itemize}
These facts show that for all $r \in [r_0,1]$,
\begin{align*}
	&\sum_{z \in B(0,N) \setminus \{0,y\}}\bE_r\mleft[ T_z^0(0,y)-T_z^1(0,y) \middle| \omega(0)=1 \mright]\\
	&\leq r^{-1} \times \sum_{z \in B(0,N) \setminus \{0,y\}}
		\bE_r\mleft[ T_z^0(\underline{z},\overline{z}) \1{\{ z \in \tilde{\gamma}_\mathcal{O}(y) \}} \middle| \omega(0)=1 \mright]\\
	&\leq r_0^{-1} \times \bE_r\biggl[ \sum_{z \in \tilde{\gamma}_\mathcal{O}(y)} T_z^0(\underline{z},\overline{z}) \bigg| \omega(0)=1 \biggr].
\end{align*}
From Lemma~\ref{lem:skip_negl}, the last expectation is not greater than
\begin{align*}
	\Cr{phi:skip}(\|y\|)
	+\bE_r\biggl[ \1{\mathcal{E}(y)} \sum_{z \in \tilde{\gamma}_\mathcal{O}(y)} T_z^0(\underline{z},\overline{z}) \bigg| \omega(0)=1 \biggr].
\end{align*}
Let us divide the second term in the above expression into two parts:
\begin{align*}
	\bE_r\biggl[ \1{\mathcal{E}(y)} \sum_{z \in \tilde{\gamma}_\mathcal{O}(y)} T_z^0(\underline{z},\overline{z}) \bigg| \omega(0)=1 \biggr]
	=I_1+I_2,
\end{align*}
where
\begin{align*}
	&I_1:=\bE_r\biggl[ \1{\mathcal{E}(y)} \sum_{z \in \tilde{\gamma}_\mathcal{O}(y)} T_z^0(\underline{z},\overline{z})
		\1{\{ T_z^0(\underline{z},\overline{z}) \leq A(\|\underline{z}-z\|+\|z-\overline{z}\|) \}}
		\bigg| \omega(0)=1 \biggr],\\
	&I_2:=\bE_r\biggl[ \1{\mathcal{E}(y)} \sum_{z \in \tilde{\gamma}_\mathcal{O}(y)} T_z^0(\underline{z},\overline{z})
		\1{\{ T_z^0(\underline{z},\overline{z})>A(\|\underline{z}-z\|+\|z-\overline{z}\|) \}}
		\bigg| \omega(0)=1 \biggr]
\end{align*}
(see Lemma~\ref{lem:longwander} for the choice of the constant $A$).
It is clear from the definitions of $\tilde{\gamma}_\mathcal{O}(y)$ that both $\sum_{z \in \tilde{\gamma}_\mathcal{O}(y)} \|\underline{z}-z\|$ and $\sum_{z \in \tilde{\gamma}_\mathcal{O}(y)} \|z-\overline{z}\|$ do not exceed $T(0,y)$.
This implies that
\begin{align*}
	I_1
	&\leq \bE_r\biggl[ \1{\mathcal{E}(y)}
		\sum_{z \in \tilde{\gamma}_\mathcal{O}(y)} A(\|\underline{z}-z\|+\|z-\overline{z}\|)
		\bigg| \omega(0)=1 \biggr]\\
	&\leq 2A \times \bE_r \mleft[ T(0,y)\1{\mathcal{E}(y)} \middle| \omega(0)=1 \mright]
		\leq 2AC_0\|y\|.
\end{align*}
On the other hand, the Cauchy--Schwarz inequality and Lemmata~\ref{lem:skip} and \ref{lem:longwander} imply that
\begin{align*}
	I_2
	&\leq \|y\|+\sum_{z \in B(0,C_0\|y\|)} \bE_r\mleft[ \1{\mathcal{E}(y) \cap \mathcal{E}'(A,y)^c}
		\times T_z^0(\underline{z},\overline{z}) \1{\{ z \in \tilde{\gamma}_\mathcal{O}(y) \}} \middle| \omega(0)=1 \mright]\\
	&\leq \|y\|+\Cr{skip}^{1/2}(2C_0\|y\|+1)^d\Cr{phi:long}(\|y\|)^{1/2}.
\end{align*}
With these observations, for any $r \in [r_0,1]$,
\begin{align*}
	&\sum_{z \in B(0,N) \setminus \{0,y\}}\bE_r\mleft[ T_z^0(0,y)-T_z^1(0,y) \middle| \omega(0)=1 \mright]\\
	&\leq r_0^{-1}\mleft\{ (2AC_0+1)\|y\|+\Cr{phi:skip}(\|y\|)+\Cr{skip}^{1/2}(2C_0\|y\|+1)^d \Cr{phi:long}(\|y\|)^{1/2} \mright\}.
\end{align*}
Therefore, the proposition immediately follows because $\Cr{phi:skip},\Cr{phi:long} \in \mathcal{K}(d,r_0)$ and the constants $C_0$, $\Cr{skip}$ and $A$ depend only on $d$ and $r_0$.
\end{proof}

The remainder of this subsection is devoted to the proofs of Lemmata~\ref{lem:skip}, \ref{lem:skip_negl} and \ref{lem:longwander}.

\begin{proof}[\bf Proof of Lemma~\ref{lem:skip}]
Let $y,z \in \Z^d \setminus \{0\}$ and $r \in [r_0,1]$.
The Cauchy--Schwarz inequality gives
\begin{align}\label{eq:skip}
\begin{split}
	&\bE_r\mleft[ T_z^0(\underline{z},\overline{z})^2 \1{\{ z \in \tilde{\gamma}_\mathcal{O}(y) \}} \middle| \omega(0)=1 \mright]\\
	&\leq \sum_{\substack{w_1,w_2 \in \Z^d \setminus \{z\}\\ w_1 \not=w_2}}
		\bE_r\mleft[ T_z^0(w_1,w_2)^4 \1{\{ \omega(w_1)=1 \}} \middle| \omega(0)=1 \mright]^{1/2}\\
	&\qquad\qquad\quad \times
		\bP_r\mleft( z \in \tilde{\gamma}_\mathcal{O}(y),\,\underline{z}=w_1,\,\overline{z}=w_2 \middle| \omega(0)=1 \mright)^{1/2}.
\end{split}
\end{align}
Let us first estimate the expectations in the right side of \eqref{eq:skip}.
From Lemma~\ref{lem:AP}, there exists a constant $c$ (which depends only on $d$ and $r_0$) such that for all $w_1,w_2 \in \Z^d \setminus \{z\}$ with $w_1 \not=w_2$,
\begin{align}\label{eq:skip_E}
\begin{split}
	&\bE_r\mleft[ T_z^0(w_1,w_2)^4 \1{\{ \omega(w_1)=1 \}} \middle| \omega(0)=1 \mright]\\
	&\leq \bE_r\mleft[ T_{z-w_1}^0(0,w_2-w_1)^4 \middle| \omega(0)=1 \mright]
		\leq c\|w_1-w_2\|^4.
\end{split}
\end{align}
We next estimate the probabilities in the right side of \eqref{eq:skip}.
Note that $T(u,v)=\tau(u,v) \geq \|u-v\|$ holds if $u$ and $v$ are consecutive points in $\gamma_\mathcal{O}(y)$.
This together with Lemma~\ref{lem:next} yields that for any $w_1 \in \Z^d \setminus \{z\}$,
\begin{align*}
	&\bP_r(z \in \tilde{\gamma}_\mathcal{O}(y),\,\underline{z}=w_1|\omega(0)=1)\\
	&\leq \bP_r\bigl( T(w_1,z)=\tau(w_1,z) \geq \|w_1-z\| \big| \omega(0)=1 \bigr)\\
	&\leq \bP_r\bigl( T(0,z-w_1)=\tau(0,z-w_1) \geq \|w_1-z\| \big| \omega(0)=1 \bigr)
		\leq \Cr{phi:next}(\|w_1-z\|).
\end{align*}
Similarly, for any $w_2 \in \Z^d \setminus \{z\}$,
\begin{align*}
	\bP_r\mleft( z \in \tilde{\gamma}_\mathcal{O}(y),\,\overline{z}=w_2 \middle| \omega(0)=1 \mright)
	\leq \Cr{phi:next}(\|w_2-z\|).
\end{align*}
Hence, the Cauchy--Schwarz inequality tells us that for any $w_1,w_2 \in \Z^d \setminus \{z\}$ with $w_1 \not= w_2$,
\begin{align}\label{eq:skip_P}
\begin{split}
	&\bP_r\mleft( z \in \tilde{\gamma}_\mathcal{O}(y),\,\underline{z}=w_1,\,\overline{z}=w_2 \middle| \omega(0)=1 \mright)\\
	&\leq \bP_r\mleft( z \in \tilde{\gamma}_\mathcal{O}(y),\,\underline{z}=w_1 \middle| \omega(0)=1 \mright)^{1/2}
		\,\bP_r\mleft( z \in \tilde{\gamma}_\mathcal{O}(y),\,\overline{z}=w_2 \middle| \omega(0)=1 \mright)^{1/2}\\
	&\leq \Cr{phi:next}(\|w_1-z\|)^{1/2}\,\Cr{phi:next}(\|w_2-z\|)^{1/2}.
\end{split}
\end{align}

Consequently, by \eqref{eq:skip}, \eqref{eq:skip_E} and \eqref{eq:skip_P},
\begin{align*}
	&\bE_r\mleft[ T_z^0(\underline{z},\overline{z})^2 \1{\{ z \in \tilde{\gamma}_\mathcal{O}(y) \}} \middle| \omega(0)=1 \mright]\\
	&\leq c^{1/2}\sum_{\substack{w_1,w_2 \in \Z^d \setminus \{z\}\\ w_1 \not=w_2}}
		\|w_1-w_2\|^2\,\Cr{phi:next}(\|w_1-z\|)^{1/4}\,\Cr{phi:next}(\|w_2-z\|)^{1/4}.
\end{align*}
A simple calculation shows that the right side is bounded from above by
\begin{align*}
	&2c^{1/2}\sum_{\substack{w_1,w_2 \in \Z^d \setminus \{z\}\\ w_1 \not=w_2}}
		\bigl( \|w_1-z\|^2+\|w_2-z\|^2 \bigr)\,\Cr{phi:next}(\|w_1-z\|)^{1/4}\,\Cr{phi:next}(\|w_2-z\|)^{1/4}\\
	&\leq 4c^{1/2}\sum_{w_1 \in \Z^d} \|w_1-z\|^2\,\Cr{phi:next}(\|w_1-z\|)^{1/4}
		\times \sum_{w_2 \in \Z^d}\Cr{phi:next}(\|w_2-z\|)^{1/4}.
\end{align*}
Since $\Cr{phi:next} \in \mathcal{K}(d,r_0)$ and the constant $c$ depends only on $d$ and $r_0$, the right side is finite and depends only on $d$ and $r_0$.
Hence, the lemma follows. 
\end{proof}

\begin{proof}[\bf Proof of Lemma~\ref{lem:skip_negl}]
To shorten notation, set $\delta:=1/(2d+6)$.
For the proof, it suffices to show that there exist $\psi_1,\psi_2 \in \mathcal{K}(d,r_0)$ such that for all $y \in \Z^d \setminus \{0\}$ and $r \in [r_0,1]$,
\begin{align}
	&\label{eq:skip_negl1}
		\bE_r\biggl[ \1{\{ T(0,y) \geq C_0\|y\| \}} \sum_{z \in \tilde{\gamma}_\mathcal{O}(y)}
		T_z^0(\underline{z},\overline{z}) \bigg| \omega(0)=1 \biggr]
		\leq \psi_1(\|y\|),\\[0.5em]
	&\label{eq:skip_negl2}
		\bP_r\mleft(
		\begin{minipage}{14.5em}
			$T(0,y)<C_0\|y\|$ and
			$\exists w \in \tilde{\gamma}_\mathcal{O}(y)$\\
			such that $T_w^0(\underline{w},\overline{w}) \geq 2C_0\|y\|^\delta$
		\end{minipage}
		\middle| \omega(0)=1 \mright)
		\leq \psi_2(\|y\|).
\end{align}
Indeed, combining the above inequalities with Lemma~\ref{lem:skip}, we have for all $y \in \Z^d \setminus \{0\}$ and $r \in [r_0,1]$,
\begin{align*}
	&\bE_r\biggl[ \1{\mathcal{E}(y)^c} \sum_{z \in \tilde{\gamma}_\mathcal{O}(y)}
		T_z^0(\underline{z},\overline{z}) \bigg| \omega(0)=1 \biggr]\\
	&\leq \psi_1(\|y\|)
		+\sum_{z \in B(0,C_0\|y\|)}
		\bE_r\mleft[ T_z^0(\underline{z},\overline{z})^2\1{\{ z \in \tilde{\gamma}_\mathcal{O}(y) \}} \middle| \omega(0)=1 \mright]^{1/2} \,\psi_2(\|y\|)^{1/2}\\
	&\leq \psi_1(\|y\|)+\Cr{skip}^{1/2} (2C_0\|y\|+1)^d \psi_2(\|y\|)^{1/2},
\end{align*}
and the lemma follows since $\psi_1,\psi_2 \in \mathcal{K}(d,r_0)$ and the constants $C_0$ and $\Cr{skip}$ depend only on $d$ and $r_0$.

Let us first prove \eqref{eq:skip_negl1}.
The left side of \eqref{eq:skip_negl1} is rewritten as follows:
\begin{align*}
	\bE_r\biggl[ \1{\{ T(0,y) \geq C_0\|y\| \}} \sum_{z \in \tilde{\gamma}_\mathcal{O}(y)}
	T_z^0(\underline{z},\overline{z}) \bigg| \omega(0)=1 \biggr]
	= J_1+J_2,
\end{align*}
where
\begin{align*}
	&J_1:=\sum_{z \in B(0,C_0\|y\|)^c}
		\bE_r\biggl[ T_z^0(\underline{z},\overline{z}) \1{\{ z \in \tilde{\gamma}_\mathcal{O}(y),\,T(0,y) \geq \|z\| \}}
		\bigg| \omega(0)=1 \biggr],\\
	&J_2:=\sum_{z \in B(0,C_0\|y\|)}
		\bE_r\biggl[ T_z^0(\underline{z},\overline{z})\1{\{ z \in \tilde{\gamma}_\mathcal{O}(y),\,T(0,y) \geq C_0\|y\| \}}
		\bigg| \omega(0)=1 \biggr].
\end{align*}
The Cauchy--Schwarz inequality and Lemmata~\ref{lem:AP} and \ref{lem:skip} imply that
\begin{align*}
	J_1
	&\leq \sum_{z \in B(0,C_0\|y\|)^c} \bE_r\mleft[ T_z^0(\underline{z},\overline{z})^2
			\1{\{ z \in \tilde{\gamma}_\mathcal{O}(y) \}} \middle| \omega(0)=1 \mright]^{1/2}\\
	&\qquad\qquad\qquad\qquad \times \bP_r(T(0,y) \geq \|z\||\omega(0)=1)^{1/2}\\
	&\leq \Cr{skip}^{1/2} \biggl( \sum_{z \in \Z^d}\phi_0(\|z\|)^{1/4} \biggr) \phi_0(C_0\|y\|)^{1/4}
\end{align*}
and
\begin{align*}
	J_2
	&\leq \sum_{z \in B(0,C_0\|y\|)}
		\bE_r\mleft[ T_z^0(\underline{z},\overline{z})^2 \1{\{ z \in \tilde{\gamma}_\mathcal{O}(y) \}}
		\middle| \omega(0)=1 \mright]^{1/2}\\
	&\qquad\qquad\qquad\qquad \times \bP_r(T(0,y) \geq C_0\|y\||\omega(0)=1)^{1/2}\\		
	&\leq \Cr{skip}^{1/2}(2C_0\|y\|+1)^d \phi_0(C_0\|y\|)^{1/2}.
\end{align*}
Note that $\phi_0 \in \mathcal{K}(d,r_0)$ and the constants $C_0$ and $\Cr{skip}$ depend only on $d$ and $r_0$.
Hence, we can find $\psi_1 \in \mathcal{K}(d,r_0)$ such that $J_1+J_2 \leq \psi_1(\|y\|)$, and \eqref{eq:skip_negl1} follows.

Before moving on to the proof of \eqref{eq:skip_negl2}, we observe that there exists $\psi \in \mathcal{K}(d,r_0)$ such that for any $y,w \in \Z^d \setminus \{0\}$ and $r \in [r_0,1]$,
\begin{align}\label{eq:skip_negl3}
\begin{split}
	&\bP_r\mleft( w \in \tilde{\gamma}_\mathcal{O}(y)
		\text{ and } \|\underline{w}-w\| \vee \|w-\overline{w}\|>\|y\|^\delta
		\middle| \omega(0)=1 \mright)\\
	&\leq \psi(\|y\|).
\end{split}
\end{align}
This is a direct consequence of the union bound and the same argument as in the proof of Lemma~\ref{lem:skip} (see below \eqref{eq:skip_E}).
Indeed, for any $y,w \in \Z^d \setminus \{0\}$ and $r \in [r_0,1]$, the left side of \eqref{eq:skip_negl3} is bounded from above by
\begin{align*}
	&\sum_{x \in B(w,\|y\|^\delta)^c}
		\bP_r(w \in \tilde{\gamma}_\mathcal{O}(y),\,\underline{w}=x|\omega(0)=1)\\
	&+\sum_{x \in B(w,\|y\|^\delta)^c}
			\bP_r(w \in \tilde{\gamma}_\mathcal{O}(y),\,\overline{w}=x|\omega(0)=1)\\
	&\leq 2\biggl( \sum_{x \in \Z^d} \Cr{phi:next}(\|x\|)^{1/2} \biggr) \Cr{phi:next}(\|y\|^\delta)^{1/2}.
\end{align*}
Hence, due to $\Cr{phi:next} \in \mathcal{K}(d,r_0)$, we can find $\psi \in \mathcal{K}(d,r_0)$ satisfying \eqref{eq:skip_negl3}.

Let us finally prove \eqref{eq:skip_negl2}.
By the union bound and \eqref{eq:skip_negl3}, the left side of \eqref{eq:skip_negl2} is smaller than or equal to
\begin{align*}
	&\sum_{w \in B(0,C_0\|y\|) \setminus \{0\}} \bP_r\mleft( w \in \tilde{\gamma}_\mathcal{O}(y),\,
		T_w^0(\underline{w},\overline{w}) \geq 2C_0\|y\|^\delta \middle| \omega(0)=1 \mright)\\
	&\leq (2C_0\|y\|+1)^d\psi(\|y\|)\\
	&\quad +\sum_{w \in B(0,C_0\|y\|)}
		\sum_{x_1,x_2 \in B(w,\|y\|^\delta) \setminus \{w\}}
		\bP_r\mleft( T_{w-x_1}^0(0,x_2-x_1) \geq 2C_0\|y\|^\delta \middle| \omega(0)=1 \mright).
\end{align*}
Note that for any $w \in \Z^d$ and $x_1,x_2 \in B(w,\|y\|^\delta)$,
\begin{align*}
	C_0\|x_2-x_1\| \leq C_0(\|x_2-w\|+\|w-x_1\|) \leq 2C_0\|y\|^\delta.
\end{align*}
Hence, Lemma~\ref{lem:AP} implies that for all $w \in \Z^d$ and $x_1,x_2 \in B(w,\|y\|^\delta) \setminus \{w\}$ with $x_1 \not= x_2$,
\begin{align*}
	\bP_r\mleft( T_{w-x_1}^0(0,x_2-x_1) \geq 2C_0\|y\|^\delta \middle| \omega(0)=1 \mright)
	\leq \phi_0(2C_0\|y\|^\delta).
\end{align*}
With these observations, the left side of \eqref{eq:skip_negl2} does not exceed
\begin{align*}
	(2C_0\|y\|+1)^d\psi(\|y\|)+(2C_0\|y\|+1)^{3d}\phi_0(2C_0\|y\|^\delta).
\end{align*}
Since $\phi_0,\psi \in \mathcal{K}(d,r_0)$ and $C_0$ depends only on $d$ and $r_0$, it is easy to see that \eqref{eq:skip_negl2} holds for some $\psi_2 \in \mathcal{K}(d,r_0)$.
\end{proof}

\begin{proof}[\bf Proof of Lemma~\ref{lem:longwander}]
Continuing from the proof of Lemma~\ref{lem:skip_negl}, we use the notation $\delta=1/(2d+6)$.
Take $A \geq 16C_0^3\Cr{CN}$ large enough to satisfy that for all $t>A$,
\begin{align*}
	\phi_0(t) \leq C_0^{-d}\Cr{CN}^{-d}(4t)^{-d(d+6)}.
\end{align*}
Moreover, fix $y \in \Z^d$ with $2C_0\|y\|^\delta>A+1$ and $r \in [r_0,1]$.
Now, assume that the event $\mathcal{E}(y) \cap \mathcal{E}'(A,y)^c$ occurs.
Then, $\gamma_\mathcal{O}(y) \in \Pi_{C_0\|y\|}$ (see above Proposition~\ref{prop:CN} for the notation $\Pi_{C_0\|y\|}$) and
\begin{align}\label{eq:bigjump}
\begin{split}
	\|y\|
	&< \sum_{A<\ell<2C_0\|y\|^\delta} \ell \sum_{z \in \tilde{\gamma}_\mathcal{O}(y)}
		\1{\{ T_z^0(\underline{z},\overline{z})=\ell,\,\|\underline{z}-z\|+\|z-\overline{z}\|<\ell/A \}}.
\end{split}
\end{align}
To estimate the right side of \eqref{eq:bigjump}, for each $\ell \in \N$, let us consider the following family $(X_\ell(z))_{z \in \Z^d}$ of $4\ell$-dependent random variables and parameter $p_\ell$:
\begin{align*}
	X_\ell(z):=\1{\{ \exists w_1,w_2 \in B(z,\ell/A) \text{ such that } T_z^0(w_1,w_2)=\ell \}},
	\qquad z \in \Z^d,
\end{align*}
and
\begin{align*}
	p_\ell:=\sup_{z \in \Z^d}\bP_r(X_\ell(z)=1)=\bP_r(X_\ell(0)=1).
\end{align*}
Since $\gamma_\mathcal{O}(y) \in \Pi_{C_0\|y\|}$ and
\begin{align*}
	\1{\{ T_z^0(\underline{z},\overline{z})=\ell,\,\|\underline{z}-z\|+\|z-\overline{z}\|<\ell/A \}} \leq X_\ell(z),
	\qquad z \in \tilde{\gamma}_\mathcal{O}(y),
\end{align*}
the right side of \eqref{eq:bigjump} is smaller than or equal to
\begin{align*}
	\sum_{A<\ell<2C_0\|y\|^\delta} \ell \sum_{z \in \tilde{\gamma}_\mathcal{O}(y)} X_\ell(z)
	\leq \sum_{A<\ell<2C_0\|y\|^\delta} \ell \max_{\pi \in \Pi_{C_0\|y\|}}\sum_{z \in \pi} X_\ell(z).
\end{align*}
With these observations, on $\mathcal{E}(y) \cap \mathcal{E}'(A,y)^c$,
\begin{align*}
	\|y\|<\sum_{A<\ell<2C_0\|y\|^\delta} \ell \max_{\pi \in \Pi_{C_0\|y\|}}\sum_{z \in \pi} X_\ell(z).
\end{align*}
Hence, the fact that $\sum_{A<\ell<2C_0\|y\|^\delta} \ell^{-2} \leq 1$ and the union bound imply that
\begin{align}\label{eq:UBE}
\begin{split}
	&\bP_r\bigl( \mathcal{E}(y) \cap \mathcal{E}'(A,y)^c \big| \omega(0)=1 \bigr)\\
	&\leq r_0^{-1}\sum_{A<\ell<2C_0\|y\|^\delta} \bP_r\biggl(
		\max_{\pi \in \Pi_{C_0\|y\|}} \sum_{z \in \pi} X_\ell(z)>\ell^{-3}\|y\| \biggr).
\end{split}
\end{align}
Note that the choice of $A$ and Lemma~\ref{lem:AP} guarantee that for all $\ell \in (A,2C_0\|y\|^\delta)$,
\begin{align*}
	p_\ell
	&\leq \sum_{w_1,w_2 \in B(0,\ell/A)}\bP_r\bigl( T_{-w_1}^0(0,w_2-w_1)=\ell \big| \omega(0)=1 \bigr)\\
	&\leq (2\ell+1)^{2d}\phi_0(\ell)
	\leq (12\ell+1)^{-d}
\end{align*}
and
\begin{align*}
	\ell^{-3}\|y\| \geq \Cr{CN}(4\ell)^{d+1}\max\bigl\{ 1,C_0\|y\|p_\ell^{1/d} \bigr\}.
\end{align*}
It follows from Proposition~\ref{prop:CN} that there exists a constant $c$ (which depends only on $d$ and $r_0$) such that for all $\ell \in (A,2C_0\|y\|^\delta)$, 
\begin{align*}
	\bP_r\biggl( \max_{\pi \in \Pi_{C_0\|y\|}} \sum_{z \in \pi} X_\ell(z)>\ell^{-3}\|y\| \biggr)
	&\leq 2^d \exp\mleft\{ -\frac{\ell^{-3}\|y\|}{(64\ell)^d} \mright\}\\
	&\leq 2^d \exp\bigl\{ -c\|y\|^{1/2} \bigr\}.
\end{align*}
This together with \eqref{eq:UBE} proves that
\begin{align*}
	\bP_r\bigl( \mathcal{E}(y) \cap \mathcal{E}'(A,y)^c \big| \omega(0)=1 \bigr)
	\leq 2^{d+1}r_0^{-1}C_0\|y\| \exp\bigl\{ -c\|y\|^{1/2} \bigr\}.
\end{align*}
It is clear that there exists $\Cr{phi:long} \in \mathcal{K}(d,r_0)$ such that for all $t \geq 0$,
\begin{align*}
	2^{d+1}r_0^{-1}C_0t \exp\bigl\{ -ct^{1/2} \bigr\} \leq \Cr{phi:long}(t),
\end{align*}
and the proof is complete.
\end{proof}

\subsection{Proof of Proposition~\ref{prop:influ_lower}}\label{subsect:influ_lower}
Let us first introduce events describing delays in the propagation of active frogs.
For any $x \in \Z^d$ and subset $\Gamma$ of $\R^d$, write $T(x,\Gamma)$ for the first passage time from $x$ to $\Gamma$, i.e.,
\begin{align}\label{eq:point-set}
	T(x,\Gamma):=\inf_{y \in \Gamma} T(x,y).
\end{align}
Moreover, for any $L \in \N$ and $z \in \Z^d$, let $\mathcal{D}_L(z):=\{ w \in \Z^d:\|w-z\|=L \}$ be the $\ell^1$-sphere of center $z$ and radius $L$.
Then, for each $L \in \N$ and $y,z \in \Z^d \setminus \{0\}$, we consider the following event $\mathcal{E}(L,y,z)$:
\begin{align*}
	\mathcal{E}(L,y,z)
	:= \mleft\{ z \in \gamma_\mathcal{O}(y),\,T(z,\mathcal{D}_L(z))>L,\,\|\overline{z}^{(L)}-z\| \leq 2L \mright\},
\end{align*}
where $\overline{z}^{(L)}$ stands for the first point $w$ after $z$ along $\gamma_\mathcal{O}(y)$ satisfying that $\|w-z\|>L$ if such a point $w$ exists; otherwise set $\overline{z}^{(L)}:=z+3L\xi_1$.
The event $\mathcal{E}(L,y,z)$ can be regarded as a ``delaying'' event.
Actually, on $\mathcal{E}(L,y,z)$, the frog sitting on $z$ contributes the first passage time from $0$ to $y$, but it takes at least time $L+1$ for active frogs propagated from $z$ to reach $\mathcal{D}_L(z)$.
Since the minimum time from $z$ to $\mathcal{D}_L(z)$ is $L$, the occurrence of $\mathcal{E}(L,y,z)$ delays the arrival of active frogs at $y$.
Although the condition $\| \bar{z}^{(L)}-z\| \leq 2L$ appearing in $\mathcal{E}(L,y,z)$ is not directly related to the delay, it enables us to connect $z$ to $\bar{z}^{(L)}$ with uniform probability by using the frog sitting on $z$.

The key to proving Proposition~\ref{prop:influ_lower} is the following proposition, which gives a lower bound for sums of probabilities of delaying events.

\begin{prop}\label{prop:delay}
There exists $L \in \N$ (which depends only on $d$ and $r_0$) such that if $\|y\|$ is large enough and $N \geq C_0\|y\|$, then for all $r \in [r_0,1]$,
\begin{align*}
	\sum_{z \in B(0,N) \setminus \{0,y\}} \bP_r(\mathcal{E}(L,y,z)|\omega(0)=1)
	\geq \frac{\|y\|}{4L}.
\end{align*}
\end{prop}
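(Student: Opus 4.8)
The plan is to rewrite the sum as the expected number of ``delaying'' sites met by the optimal path $\gamma_\mathcal{O}(y)$, and to show that this count is at least $\|y\|/(4L)$ on an event of overwhelming probability. Since $\gamma_\mathcal{O}(y)\subset B(0,T(0,y))$, on $\{T(0,y)<C_0\|y\|\}$ one has
\[
	\sum_{z\in B(0,N)\setminus\{0,y\}}\bP_r\bigl(\mathcal{E}(L,y,z)\mid\omega(0)=1\bigr)
	=\bE_r\Bigl[\,\#\{z\in\gamma_\mathcal{O}(y):\mathcal{E}(L,y,z)\text{ occurs}\}\ \Big|\ \omega(0)=1\Bigr].
\]
I would first work on the good event $\mathcal{G}(y):=\{T(0,y)<C_0\|y\|\}\cap\{\tau(z,\bar z)<\|y\|^{1/2}\text{ for all }z\in\gamma_\mathcal{O}(y)\setminus\{y\}\}$, which by Lemmata~\ref{lem:AP} and \ref{lem:next} has probability at least $1-\phi(\|y\|)$, uniformly over $r\in[r_0,1]$, for some $\phi\in\mathcal{K}(d,r_0)$; on $\mathcal{G}(y)$ every edge of $\gamma_\mathcal{O}(y)$ has $\ell^1$-length below $\|y\|^{1/2}$ and $\gamma_\mathcal{O}(y)\in\Pi_{C_0\|y\|}$. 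On $\mathcal{G}(y)$ I would run the greedy decomposition of $\gamma_\mathcal{O}(y)$ at scale $L$: put $z_0:=0$ and $z_{k+1}:=\overline{z_k}^{(L)}$ as long as this is a genuine vertex of $\gamma_\mathcal{O}(y)$, stopping at the last such center $z_K$. Then $z_0,\dots,z_{K-1}\in\gamma_\mathcal{O}(y)$, $\|z_{k+1}-z_k\|>L$ for $k<K$, $\|z_K-y\|\le L$, and $\overline{z_k}^{(L)}=z_{k+1}$, so $\|\overline{z_k}^{(L)}-z_k\|\le 2L$ precisely when the last edge of block $k$ has $\ell^1$-length at most $L$.

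For $k<K$ the event $\mathcal{E}(L,y,z_k)$ fails only if $T(z_k,\mathcal{D}_L(z_k))=L$ (recall $T(z,\mathcal{D}_L(z))\ge L$ always) or $\|z_{k+1}-z_k\|>2L$, i.e.\ block $k$ ends with an edge of length $>L$. Two sparsity statements would close the argument. \emph{(i)} The number of $z_k$ with $T(z_k,\mathcal{D}_L(z_k))=L$ is at most $\|y\|/(16L)$ with probability $1-\phi(\|y\|)$. Here the variables $X_L(z):=\1{\{T(z,\mathcal{D}_L(z))=L\}}$ are $4L$-dependent (an optimal chain realizing the event stays in $B(z,L)$, with each frog run for at most $L$ steps, hence within $B(z,2L)$), and the new input, to be established in Section~\ref{subsect:Nakajima}, is that $\sup_{z\in\Z^d,\,r_0\le r\le 1}\bP_r(T(z,\mathcal{D}_L(z))=L)\le\psi(L)$ for some $\psi\in\mathcal{K}(d,r_0)$; taking $L$ large so that $\psi(L)$ lies below the threshold of Proposition~\ref{prop:CN} (with $M=C_0\|y\|$), and using $\#\{k<K:X_L(z_k)=1\}\le\max_{\pi\in\Pi_{C_0\|y\|}}\sum_{z\in\pi}X_L(z)$, yields (i). \emph{(ii)} The number of long edges of $\gamma_\mathcal{O}(y)$ (length $>L$) is at most $\|y\|/(16L)$ with probability $1-\phi(\|y\|)$, and consequently $K\ge 3\|y\|/(8L)$. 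For this one uses that $T(z,\bar z)=\tau(z,\bar z)$ for consecutive vertices of $\gamma_\mathcal{O}(y)$ (as in the proof of Lemma~\ref{lem:skip}), so a long edge forces a first-passage event of the form $\{T(z,w)=\tau(z,w)\ge\max(L,\|z-w\|)\}$ with $\omega(z)=1$; pushing the estimates of Lemma~\ref{lem:next} and again invoking Proposition~\ref{prop:CN} shows that the total $\ell^1$-length wasted by long edges along $\gamma_\mathcal{O}(y)$ is a vanishing fraction of $\|y\|$ as $L\to\infty$, whence the long-edge count, and — since each remaining block contributes at most $2L$ to $\|z_K\|\ge\|y\|-L$ — the bound $K\ge 3\|y\|/(8L)$ once $L$ is large.

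Putting the pieces together, on $\mathcal{G}(y)$ intersected with the two coarse-graining events (a set of probability at least $1-3\phi(\|y\|)$ uniformly in $r$),
\[
	\#\{z\in\gamma_\mathcal{O}(y):\mathcal{E}(L,y,z)\text{ occurs}\}
	\ \ge\ K-\frac{\|y\|}{16L}-\frac{\|y\|}{16L}
	\ \ge\ \frac{3\|y\|}{8L}-\frac{\|y\|}{8L}
	\ =\ \frac{\|y\|}{4L};
\]
since the complementary bad event has probability at most $3\phi(\|y\|)$ and the count is nonnegative, taking $\bE_r[\,\cdot\mid\omega(0)=1]$ loses only a vanishing amount and gives the asserted lower bound for $\|y\|$ large, with the same $L$ throughout.

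I expect the heart of the difficulty to be step (ii): a priori the $\ell^1$-length of $\gamma_\mathcal{O}(y)$ is only bounded by $T(0,y)\le C_0\|y\|$ with $C_0\ge 1$, so the lower bound $K\gtrsim\|y\|/L$ cannot be obtained by any crude length count, and the sparseness of long edges has to be extracted from the finitely-dependent structure through Proposition~\ref{prop:CN}. This requires replacing the globally defined events ``$\gamma_\mathcal{O}(y)$ uses a long edge at $z$'' and ``$T(z,\mathcal{D}_L(z))=L$'' by honest local surrogates before Proposition~\ref{prop:CN} can be applied — the main place where care is needed — while the genuinely new analytic ingredient, the uniform-in-$r$ smallness of $\bP_r(T(z,\mathcal{D}_L(z))=L)$, is postponed to Section~\ref{subsect:Nakajima}.
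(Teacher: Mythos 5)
Your proposal is correct and follows essentially the same route as the paper: the two bad phenomena you isolate in (i) and (ii) are exactly the paper's events $\mathcal{E}_1(L,y)$ (few sites on $\gamma_\mathcal{O}(y)$ with $T(w,\mathcal{D}_L(w))=L$, controlled by Proposition~\ref{prop:CN} plus the deferred Section~\ref{subsect:Nakajima} input) and $\mathcal{E}_2(L,y)$ (small total contribution of long edges, controlled by the same local surrogates and Proposition~\ref{prop:CN} via Lemma~\ref{lem:next}), followed by a deterministic count of delaying sites along $\gamma_\mathcal{O}(y)$ on the good event — the only difference being bookkeeping: you count one greedy block center per $\ell^1$-scale $L$, while the paper counts all short-edge vertices and removes an $L$-neighborhood of long edges. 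One cosmetic repair: your count lands exactly at $\|y\|/(4L)$ on the good event, before multiplying by $1-3\phi(\|y\|)$ and before discarding $z_0=0\notin B(0,N)\setminus\{0,y\}$, so tighten the thresholds (e.g.\ $\|y\|/(32L)$ in (i)--(ii)); the slack $K\ge(15/32-o(1))\|y\|/L$ versus the $3\|y\|/(8L)$ you actually use makes this adjustment harmless.
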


The proof of the above proposition is postponed until the next section since we need some more work.
For now, let us move on to the proof of Proposition~~\ref{prop:influ_lower}.

\begin{proof}[\bf Proof of Proposition~~\ref{prop:influ_lower}]
Let $\|y\|$ be sufficiently large and $N \geq C_0\|y\|$.
It is clear that if every $\gamma \in \mathcal{O}(y)$ contains a site $z$, then $T_z^0(0,y)-T_z^1(0,y) \geq 1$ holds.
This implies that for any $r \in [r_0,1]$,
\begin{align*}
	&\sum_{z \in B(0,N) \setminus \{0,y\}} \bE_r\mleft[ T_z^0(0,y)-T_z^1(0,y) \middle| \omega(0)=1 \mright]\\
	&\geq \sum_{z \in B(0,N) \setminus \{0,y\}}
		\bP_r\mleft( z \in \gamma \text{ for all } \gamma \in \mathcal{O}(y) \middle| \omega(0)=1 \mright).
\end{align*}
Hence, Proposition~~\ref{prop:influ_lower} immediately follows once we prove that for any $r \in [r_0,1]$ and $z \in \Z^d \setminus \{0\}$,
\begin{align}\label{eq:resampling}
	\bP_r(z \in \gamma \text{ for all } \gamma \in \mathcal{O}(y)|\omega(0)=1)
	\geq (2d)^{-2L}\,\bP_r(\mathcal{E}(L,y,z)|\omega(0)=1).
\end{align}
Indeed, this combined with Proposition~\ref{prop:delay} shows that for all $r \in [r_0,1]$,
\begin{align*}
	&\sum_{z \in B(0,N) \setminus \{0,y\}} \bE_r\mleft[ T_z^0(0,y)-T_z^1(0,y) \middle| \omega(0)=1 \mright]\\
	&\geq (2d)^{-2L}\sum_{z \in B(0,N) \setminus \{0,y\}} \bP_r(\mathcal{E}(L,y,z)|\omega(0)=1)
	\geq \frac{(2d)^{-2L}}{4L} \|y\|,
\end{align*}
which is the desired conclusion since $L$ depends only on $d$ and $r_0$.

It remains to prove \eqref{eq:resampling}.
Fix $r \in [r_0,1]$ and $z \in \Z^d \setminus \{0\}$.
Let $S_\cdot^*(z)$ be an independent copy of $S_\cdot(z)$, and we put the superscript $*$ on the notations $\tau(\cdot,\cdot)$, $T(\cdot,\cdot)$ and $\mathcal{O}(y)$ when $S_\cdot(z)$ is replaced by $S^*_\cdot(z)$ in their definitions.
Now, consider the event
\begin{align*}
	\bar{\mathcal{E}}(L,y,z):=\mathcal{E}(L,y,z) \cap \mleft\{ \tau^*(z,\overline{z}^{(L)})=\|z-\overline{z}^{(L)}\| \mright\}.
\end{align*}
Suppose that on $\bar{\mathcal{E}}(L,y,z)$, there exists a sequence in $\mathcal{O}^*(y)$ not containing $z$.
Clearly, $T(0,y) \leq T^*(0,y)$ holds.
Furthermore, letting $w_0$ be the first point of $\mathcal{D}_L(z)$ which is visited by the frogs in the segment between $z$ and $\bar{z}^{(L)}$ of $\gamma_\mathcal{O}(y)$, one has
\begin{align*}
	T(z,\overline{z}^{(L)})
	&\geq T(z,w_0)+\|w_0-\overline{z}^{(L)}\|\\
	&\geq L+1+\|w_0-\overline{z}^{(L)}\|
		= \|z-w_0\|+\|w_0-\overline{z}^{(L)}\|+1\\
	&\geq \|z-\overline{z}^{(L)}\|+1
		> \tau^*(z,\overline{z}^{(L)}).
\end{align*}
Denote by $(x_i)_{i=0}^j$ (resp.~$(x'_i)_{i=0}^{j'}$) the segment of $\gamma_\mathcal{O}(y)$ with endpoints $0$ and $z$ (resp.~$\bar{z}^{(L)}$ and $y$).
Then, due to the optimality of $\gamma_\mathcal{O}(y)$,
\begin{align*}
	T(0,y) \leq T^*(0,y)
	&\leq \sum_{i=0}^{j-1} \tau(x_i,x_{i+1})+\tau^*(z,\overline{z}^{(L)})+\sum_{i=0}^{j'-1}\tau(x'_i,x'_{i+1})\\
	&< T(0,z)+T(z,\overline{z}^{(L)})+T(\overline{z}^{(L)},y)=T(0,y),
\end{align*}
which is a contradiction.
This means that every $\gamma \in \mathcal{O}^*(y)$ contains $z$ on $\bar{\mathcal{E}}(L,y,z)$, and we have
\begin{align}\label{eq:P*}
	\bar{\bP}_r^z( z \in \gamma \text{ for all } \gamma \in \mathcal{O}^*(y)|\omega(0)=1)
	\geq \bar{\bP}_r^z\bigl( \bar{\mathcal{E}}(L,y,z) \big| \omega(0)=1 \bigr),
\end{align}
where $\bar{\bP}_r^z$ is the product of $\bP_r$ and the law of $S_\cdot^*(z)$.
Due to $z \in \Z^d \setminus \{0\}$ and the definition of $\bar{\bP}_r^z$,
\begin{align*}
	\bar{\bP}_r^z( z \in \gamma \text{ for all } \gamma \in \mathcal{O}^*(y)|\omega(0)=1)
	= \bP_r( z \in \gamma \text{ for all } \gamma \in \mathcal{O}(y)|\omega(0)=1)
\end{align*}
and
\begin{align*}
	&\bar{\bP}_r^z\bigl( \bar{\mathcal{E}}(L,y,z) \big| \omega(0)=1 \bigr)\\
	&\geq \bP_r( \mathcal{E}(L,y,z)|\omega(0)=1)
		\times \inf_{w \in B(0,2L)}P^0(\tau(0,w)=\|w\|)\\
	&\geq (2d)^{-2L}\,\bP_r( \mathcal{E}(L,y,z)|\omega(0)=1).
\end{align*}
Combining the above facts and \eqref{eq:P*}, we obtain
\begin{align*}
	\bP_r( z \in \gamma \text{ for all } \gamma \in \mathcal{O}(y)|\omega(0)=1)
	\geq (2d)^{-2L}\,\bP_r( \mathcal{E}(L,y,z)|\omega(0)=1),
\end{align*}
and \eqref{eq:resampling} follows.
\end{proof}

\section{Estimate for sums of probabilities of delaying events}\label{sect:delay}
This section is devoted to the proof of Proposition~\ref{prop:delay}.
Roughly speaking, our main task is to observe that the first passage time is strictly greater than the $\ell^1$-norm with high probability and there are enough $\ell^1$-balls of a sufficiently large radius that intersect $\gamma_\mathcal{O}(y)$ but contain no long-wandering frog.
Since the first assertion is of interest in itself, we discuss it in Section~\ref{subsect:Nakajima}, apart from the proof of Proposition~\ref{prop:delay}.
The second assertion and the proof of Proposition~\ref{prop:delay} are dealt with in Section~\ref{subsect:pf_delay}.

\subsection{Comparison between the first passage time and the $\boldsymbol{\ell^1}$-norm}\label{subsect:Nakajima}
This subsection treats the case $r=1$.
In other words, throughout this subsection, we assume that $\omega(x)=1$ for all $x \in \Z^d$.
Then, the product measure $\bP_1=\P_1 \times P$ can be reduced to the law $P$ of frogs because the initial configuration is no longer random.

Our objective here is to prove the next proposition, which provides an exponential decay for the probability that the first passage time coincides with the $\ell^1$-norm.

\begin{prop}\label{prop:Nakajima}
There exists a constant $\Cl{Nakajima}>0$ (which depends only on $d$) such that for all large $L \in \N$,
\begin{align}\label{eq:Nakajima}
	P\bigl( T(0,\mathcal{D}_L(0))=L \bigr) \leq e^{-\Cr{Nakajima}L},
\end{align}
where $T(0,\mathcal{D}_L(0))$ is the first passage time from $0$ to $\mathcal{D}_L(0)$ (see \eqref{eq:point-set} and the sentence following it for more details).
\end{prop}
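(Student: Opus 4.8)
The plan is to recast $\{T(0,\mathcal{D}_L(0))=L\}$ as a combinatorial event about directed paths, bound it by a first–moment estimate over those paths, and control the per‑path probability by a renewal argument whose exponential rate beats $\log d$.

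\emph{Reduction.} If $T(0,w)=L$ with $\|w\|=L$, then along an optimal chain $0=x_0,\dots,x_m=w$ one has $\sum_i\tau(x_i,x_{i+1})=L=\|x_0-x_m\|$; since $\tau(x_i,x_{i+1})\ge\|x_i-x_{i+1}\|$ and $\sum_i\|x_i-x_{i+1}\|\ge\|x_0-x_m\|$ by \eqref{eq:triangle}, all inequalities are equalities, so $\tau(x_i,x_{i+1})=\|x_i-x_{i+1}\|$ for every $i$ and $x_0,\dots,x_m$ are coordinatewise monotone. Hence the first $\|x_i-x_{i+1}\|$ steps of $S_\cdot(x_i)$ trace a monotone nearest‑neighbour path from $x_i$ to $x_{i+1}$, and concatenating these pieces produces a self‑avoiding monotone nearest‑neighbour path $\pi=(p_0=0,p_1,\dots,p_L)$ lying in one of the $2^d$ closed orthants, together with ``cut levels'' $0=j_0<j_1<\dots<j_s=L$ such that $S_\cdot(p_{j_a})$ follows $\pi$ for at least $j_{a+1}-j_a$ steps for each $a$. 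Let $F_\pi$ be the event that such cut levels exist for $\pi$. The above shows $\{T(0,\mathcal{D}_L(0))=L\}\subseteq\bigcup_\sigma\bigcup_\pi F_\pi$, where $\pi$ ranges over the $d^L$ monotone nearest‑neighbour paths of length $L$ from $0$ in a fixed orthant $\sigma$; the reverse inclusion is immediate, since $F_\pi$ produces a chain realizing $T(0,p_L)=L$ with $\|p_L\|=L$.

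\emph{Per-path bound.} Fix $\pi$ and set $R_j:=\max\{r\ge0:S_t(p_j)=p_{j+t}\text{ for }1\le t\le r\}$ for $j<L$; since $\pi$ is self‑avoiding the $R_j$ are independent, and $P(R_j\ge r)=(2d)^{-r}$ because each step of the simple random walk hits a prescribed neighbour with probability $1/(2d)$. Then $F_\pi$ is exactly the event that level $L$ is reachable in the ``jump game'' in which one may move from a reachable level $j$ to any level in $[j+1,j+R_j]$; equivalently $F_\pi=\{M_\pi\ge L\}$, where $M_\pi$ is the largest reachable level. Exposing $R_0,R_1,\dots$ in order and following the record sequence $Y_1=R_0$, $Y_{k+1}=\max_{j\le Y_k}(j+R_j)$, the $k$-th ``progress step'' $Y_{k+1}>Y_k$ uses only the fresh variables $R_j$ with $j\in(Y_{k-1},Y_k]$ (these windows being disjoint), so the progress events are independent; moreover the conditional probability of making progress is at most $q_\infty:=1-\prod_{i\ge1}(1-(2d)^{-i})$, and the positive increments $Y_{k+1}-Y_k$ have conditional tail $\le\frac{2d}{2d-1}(2d)^{-t}$. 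Since $M_\pi=R_0+\sum_k(Y_{k+1}-Y_k)$, a geometric‑sum computation yields, for $1<z<2d$,
\begin{align*}
	\mathbb{E}\bigl[z^{M_\pi}\bigr]\;\le\;\frac{\mathbb{E}[z^{R_0}]}{1-\theta(z)},\qquad
	\theta(z):=(z-1)\Bigl(q_\infty+\frac{z}{(2d-1)(2d-z)}\Bigr).
\end{align*}
The key point is that $q_\infty<\frac{1}{2d-1}$, because $\prod_{i\ge1}(1-(2d)^{-i})>1-\sum_{i\ge1}(2d)^{-i}=1-\frac{1}{2d-1}$ strictly; this gives $\theta(d)<\frac{2(d-1)}{2d-1}<1$, hence $\theta(z_0)<1$ for some $z_0\in(d,2d)$ depending only on $d$, so $\mathbb{E}[z_0^{M_\pi}]<\infty$ and $P(F_\pi)=P(M_\pi\ge L)\le C z_0^{-L}$ with $C=C(d)$.

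\emph{Conclusion.} Summing over the $2^d$ orthants and the $d^L$ paths, $P(T(0,\mathcal{D}_L(0))=L)\le 2^d d^L\cdot C z_0^{-L}=2^dC(d/z_0)^L$, which is $\le e^{-\Cr{Nakajima}L}$ for all large $L$ once $\Cr{Nakajima}<\log(z_0/d)$. The main obstacle I anticipate is precisely the per‑path renewal estimate: one must justify the independence of successive progress steps through the right filtration and, above all, obtain an exponential rate $z_0$ that is \emph{strictly} above $d$ — if the per‑path decay were merely $d^{-L}$, the sum over the $d^L$ paths would only give a bound of order $1$, so the strict inequality $q_\infty<\frac{1}{2d-1}$ is exactly what makes the argument work.
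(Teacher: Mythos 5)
Your proposal is correct in substance but follows a genuinely different route from the paper. The paper proves the proposition by a first-moment/subadditivity argument on the quantity $q_n$, the expected number of sites $y$ in the nonnegative orthant with $T(0,y)+\tau(y,\Delta_n)=n$: an explicit small-scale computation gives $2q_3<1$ (with margin $\tfrac{1}{8d^2}$), a block recursion gives $q_{An}\leq \tfrac12(2q_A)^n$, and symmetry over the $2^d$ orthants finishes the proof. You instead observe that on $\{T(0,\mathcal{D}_L(0))=L\}$ every optimal chain traces a monotone self-avoiding geodesic $\pi$, take a union bound over the $2^dd^L$ such paths, and control the per-path ``relay'' event by a renewal/exploration estimate for the jump game driven by the i.i.d.\ variables $R_j$ with $P(R_j\geq r)=(2d)^{-r}$; your crucial point, that the per-path rate strictly beats $d^{-L}$ because $q_\infty<\tfrac{1}{2d-1}$, plays exactly the role that $2q_3<\tfrac12$ plays in the paper (both margins are of order $d^{-2}$), and your reduction to monotone paths is sound (equality in the $\ell^1$ triangle inequality forces all increments into one orthant, and the realized concatenated path witnesses $F_\pi$). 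What the paper's route buys is that the strict inequality is checked by a finite, completely explicit computation at scale $3$; what your route buys is a more transparent picture of why the entropy $d^L$ of geodesics is beaten, at the price of the renewal analysis.

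One step of your write-up needs repair, though it does not endanger the conclusion. The inequality $\E[z^{M_\pi}]\leq \E[z^{R_0}]/(1-\theta(z))$ with $\theta(z)=(z-1)\bigl(q_\infty+\tfrac{z}{(2d-1)(2d-z)}\bigr)$ is not obtained by the telescoping you hint at: conditioning each increment and using $\E[z^{D_k}\mid\mathcal{F}_k]\leq 1+\theta(z)$ produces an unbounded product. The correct route is to decompose over the number $K$ of progress steps (after the first zero increment all later ones vanish) and to bound, per progress step, $\E\bigl[z^{D}\1{\{D\geq 1\}}\,\big|\,\mathcal{F}\bigr]\leq zq_\infty+(z-1)\tfrac{z}{(2d-1)(2d-z)}=:\eta(z)$, giving $\E[z^{M_\pi}]\leq \E[z^{R_0}]/(1-\eta(z))$. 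Note $\eta$ has $zq_\infty$ where your $\theta$ has $(z-1)q_\infty$, so at $z=d$ your comfortable bound $\tfrac{2(d-1)}{2d-1}$ is not available; you really do need the strict Weierstrass inequality you invoke, in quantitative form, e.g.\ $q_\infty\leq \tfrac{1}{2d-1}-(2d)^{-3}$, which yields $\eta(d)\leq 1-\tfrac{1}{8d^2}<1$ (for $d=2$, $\eta(2)\approx 0.96$), and then continuity gives some $z_0=z_0(d)\in(d,2d)$ with $\eta(z_0)<1$ and $\E[z_0^{R_0}]<\infty$, so $P(F_\pi)\leq C(d)z_0^{-L}$ and the union bound concludes as you say. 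With this adjustment your argument is complete.
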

\begin{proof}
Let us first prepare some notation.
Define for any $m \in \N_0$,
\begin{align*}
	\Delta_m:=\{ x \in (\N_0)^d:\|x\|=m \},\quad
	R_m:=\{ x \in (\N_0)^d:\|x\| \leq m \},
\end{align*}
which are the nonnegative orthants of the $\ell^1$-sphere and ball of center $0$ and radius $m$, respectively.
Furthermore, for any $x \in \Z^d$ and subset $\Gamma$ of $\Z^d$, $\tau(x,\Gamma)$ stands for the hitting time of the frog sitting on $x$ to $\Gamma$, i.e.,
\begin{align*}
	\tau(x,\Gamma):=\inf_{y \in \Gamma} \tau(x,y).
\end{align*}
Then, set for each $n \in \N$,
\begin{align*}
	q_n:=E\Biggl[ \sum_{y \in R_{n-1}}\1{\{ T(0,y)+\tau(y,\Delta_n)=n \}} \Biggr].
\end{align*}
This is the expectation of the number of frogs which finally approach $\Delta_n$ in each optimal selection and order for $T(0,\Delta_n)$, conditioned on the event that $T(0,\Delta_n)=n$.

The key to proving the proposition is the following lemmata, which provide some upper bounds for $q_n$'s. 

\begin{lem}\label{lem:q3}
We have $2q_3<1$.
\end{lem}

\begin{lem}\label{lem:q_induction}
Let $2 \leq A \in \N$.
Then, $q_{An} \leq 2^{-1}(2q_A)^n$ holds for all $n \in \N$.
\end{lem}

The proofs of the above lemmata are postponed for now, and we continue proving the proposition.
The symmetry of frogs and Lemma~\ref{lem:q_induction} imply that for any $L \geq 3$,
\begin{align*}
	P\bigl( T(0,\mathcal{D}_L(0))=L \bigr)
	&\leq 2^d P\bigl( T(0,\Delta_{3\lfloor L/3 \rfloor})=3\lfloor L/3 \rfloor \bigr)\\
	&\leq 2^dq_{3\lfloor L/3 \rfloor}
		\leq 2^{d-1}(2q_3)^{\lfloor L/3 \rfloor}.
\end{align*}
From Lemma~\ref{lem:q3}, the rightmost side exponentially decays in $L$, and the proposition follows.
\end{proof}

We close this subsection with the proofs of Lemmata~\ref{lem:q3} and \ref{lem:q_induction}.

\begin{proof}[\bf Proof of Lemma~\ref{lem:q3}]
By definition,
\begin{align*}
	q_3
	= P(\tau(0,\Delta_3)=3)&+\sum_{y \in \Delta_1} P\bigl( T(0,y)+\tau(y,\Delta_3)=3 \bigr)\\
	&+\sum_{y \in \Delta_2} P\bigl( T(0,y)+\tau(y,\Delta_3)=3 \bigr).
\end{align*}
A straightforward calculation gives $P(\tau(0,\Delta_3)=3)=1/8$.
Furthermore, we use the independence of frogs to obtain
\begin{align*}
	\sum_{y \in \Delta_1} P\bigl( T(0,y)+\tau(y,\Delta_3)=3 \bigr)
	&= \sum_{y \in \Delta_1} P(\tau(0,y)=1)\,P(\tau(y,\Delta_3)=2)\\
	&= \sum_{y \in \Delta_1} \frac{1}{2d} \times \frac{1}{4}=\frac{1}{8}
\end{align*}
and
\begin{align*}
	\sum_{y \in \Delta_2} P\bigl( T(0,y)+\tau(y,\Delta_3)=3 \bigr)
	&= \sum_{y \in \Delta_2} P(T(0,y)=2)\,P(\tau(y,\Delta_3)=1)\\
	&= \half \sum_{y \in \Delta_2} P(T(0,y)=2).
\end{align*}
With these observations,
\begin{align*}
	2q_3=\half+\sum_{y \in \Delta_2} P(T(0,y)=2).
\end{align*}
Hence, it suffices for the proof to show
\begin{align}\label{eq:half}
	\sum_{y \in \Delta_2} P(T(0,y)=2)<\half.
\end{align}

To do this, we rewrite the left side of \eqref{eq:half} as
\begin{align}\label{eq:half_key}
	\sum_{1 \leq i<j \leq d} P(T(0,\xi_i+\xi_j)=2)+\sum_{i=1}^dP(T(0,2\xi_i)=2).
\end{align}
The union bound gives a rough upper bound for the first term of \eqref{eq:half_key}:
\begin{align*}
	&\sum_{1 \leq i<j \leq d} P(T(0,\xi_i+\xi_j)=2)\\
	&\leq \sum_{1 \leq i<j \leq d} \biggl\{ P(\tau(0,\xi_i+\xi_j)=2)+\sum_{x \in \Delta_1} P\bigl( \tau(0,x)+\tau(x,\xi_i+\xi_j)=2 \bigr) \biggr\}\\
	&= \begin{pmatrix} d\\ 2\end{pmatrix} \frac{4}{(2d)^2}=\frac{2d(d-1)}{(2d)^2}.
\end{align*}
On the other hand, we calculate the second term of \eqref{eq:half_key} carefully.
It is clear that for any $i \in [1,d]$, the event $\{ T(0,2\xi_i)=2 \}$ can be written as the disjoint union of the following events $\mathcal{S}_i$, $\mathcal{T}_i$ and $\mathcal{U}_i$:
\begin{align*}
	&\mathcal{S}_i:= \{ S_1(0)=\xi_i,\,S_2(0)=2\xi_i,\,S_1(\xi_i) \not= 2\xi_i \},\\
	&\mathcal{T}_i:= \{ S_1(0)=\xi_i,\,S_2(0) \not= 2\xi_i,\,S_1(\xi_i)=2\xi_i \},\\
	&\mathcal{U}_i:= \{ S_1(0)=\xi_i,\,S_2(0)=2\xi_i,\,S_1(\xi_i)=2\xi_i \}.
\end{align*}
The independence of frogs yields that for any $i \in [1,d]$,
\begin{align*}
	P(\mathcal{S}_i)=P(\mathcal{T}_i)=\frac{2d-1}{(2d)^3},\quad
	P(\mathcal{U}_i)=\frac{1}{(2d)^3},
\end{align*}
and hence 
\begin{align*}
	\sum_{i=1}^d P(T(0,2\xi_i)=2)
	= \sum_{i=1}^d\{ P(\mathcal{S}_i)+P(\mathcal{T}_i)+P(\mathcal{U}_i) \}
	= \frac{d(4d-1)}{(2d)^3}.
\end{align*}
With these observations, one has
\begin{align*}
	\sum_{y \in \Delta_2} P(T(0,y)=2)
	\leq \frac{2d(d-1)}{(2d)^2}+\frac{d(4d-1)}{(2d)^3}
	= \half-\frac{1}{8d^2}<\half,
\end{align*}
and \eqref{eq:half} is proved.
\end{proof}

\begin{proof}[\bf Proof of Lemma~\ref{lem:q_induction}]
Fix $A \geq 2$.
We have for all $n \geq 2$,
\begin{align*}
	q_{An}=Q_1(n)+Q_2(n)+Q_3(n),
\end{align*}
where
\begin{align*}
	&Q_1(n):=E\Biggl[ \sum_{y \in R_{An-A-1}} \1{\{ T(0,y)+\tau(y,\Delta_{An})=An \}} \Biggr],\\
	&Q_2(n):=E\Biggl[ \sum_{y \in {\Delta_{An-A}}}\1{\{ T(0,y)+\tau(y,\Delta_{An})=An \}} \Biggr],\\
	&Q_3(n):=E\Biggl[ \sum_{y \in R_{An-1} \setminus R_{An-A}} \1{\{ T(0,y)+\tau(y,\Delta_{An})=An \}} \Biggr].
\end{align*}
Our goal is to prove that for all $n \geq 2$,
\begin{align}\label{eq:Q1Q2}
	Q_1(n)=q_{An-A}\times P^0(\tau(0,\Delta_A)=A) \geq Q_2(n)
\end{align}
and
\begin{align}\label{eq:Q3}
	Q_3(n) \leq 2q_{An-A} \times E\Biggl[ \sum_{x \in R_{A-1} \setminus \{ 0 \}}\1{\{ T(0,x)+\tau(x,\Delta_A)=A \}} \Biggr].
\end{align}
Indeed, \eqref{eq:Q1Q2} and \eqref{eq:Q3} lead to
\begin{align*}
	q_{An}
	&\leq 2q_{An-A} \Biggl( P^0(\tau(0,\Delta_A)=A)
		+E\Biggl[ \sum_{x \in R_{A-1} \setminus \{ 0 \}}\1{\{ T(0,x)+\tau(x,\Delta_A)=A \}} \Biggr] \Biggr)\\
	&= q_{An-A}(2q_A),
\end{align*}
and the lemma follows by induction on $n$.

We fix $n \geq 2$ and begin by proving the equality of \eqref{eq:Q1Q2}.
Note that $T(0,y)$ and $\tau(y,\Delta_{An})$ are independent for all $y \in \Z^d$.
This, together with the strong Markov property and translation invariance of frogs, shows that for any $y \in R_{An-A-1}$,
\begin{align*}
	&P\bigl( T(0,y)+\tau(y,\Delta_{An})=An \bigr)\\
	&= P(T(0,y)=\|y\|) P^y\bigl( \tau(y,\Delta_{An-A})=An-A-\|y\| \bigr) P^0(\tau(0,\Delta_A)=A)\\
	&= P\bigl( T(0,y)+\tau(y,\Delta_{An-A})=An-A \bigr) P^0(\tau(0,\Delta_A)=A).
\end{align*}
Hence,
\begin{align*}
	Q_1(n)
	&= \sum_{y \in R_{An-A-1}} P\bigl( T(0,y)+\tau(y,\Delta_{An-A})=An-A \bigr) P^0(\tau(0,\Delta_A)=A)\\
	&= q_{An-A}\times P^0(\tau(0,\Delta_A)=A),
\end{align*}
and the equality of \eqref{eq:Q1Q2} is valid.

Let us next check the inequality of \eqref{eq:Q1Q2}.
Similarly to the above, we obtain
\begin{align*}
	Q_2(n)
	&= \sum_{y \in \Delta_{An-A}} P(T(0,y)=\|y\|) P^y(\tau(y,\Delta_{An})=A)\\
	&= \sum_{y \in \Delta_{An-A}} P(T(0,y)=An-A)P^0(\tau(0,\Delta_A)=A).
\end{align*}
Since for each $y \in \Delta_{An-A}$,
\begin{align*}
	\{ T(0,y)=An-A \}=\bigcup_{x \in R_{An-A-1}} \{ T(0,x)+\tau(x,y)=An-A \},
\end{align*}
the union bound shows
\begin{align*}
	&\sum_{y \in \Delta_{An-A}} P(T(0,y)=An-A)\\
	&\leq \sum_{x \in R_{An-A-1}} \sum_{y \in \Delta_{An-A}} P(T(0,x)+\tau(x,y)=An-A)\\
	&= \sum_{x \in R_{An-A-1}} P\bigl( T(0,x)+\tau(x,\Delta_{An-A})=An-A \bigr)
		= q_{An-A}.
\end{align*}
Here, in the first equality, we used the fact that the events $\{ T(0,x)+\tau(x,y)=An-A \}$, $y \in \Delta_{An-A}$, are disjoint.
With these observations,
\begin{align*}
	Q_2(n)
	\leq q_{An-A} \times P^0(\tau(0,\Delta_A)=A),
\end{align*}
which establishes the inequality of \eqref{eq:Q1Q2}.

We finally prove \eqref{eq:Q3}.
For any $x \in R_{An-A}$ and $y \in R_{An-1} \setminus R_{An-A}$, consider the event
\begin{align*}
	\mathcal{E}_{x,y}:=\mleft\{
	\begin{minipage}{23em}
		$\tau(x,\Delta_{An-A})=An-A-\|x\|$ and\\
		$\min_{z \in R_{An-1} \setminus R_{An-A}}( \sigma_x(z)+\rho(z,y))+\tau(y,\Delta_{An})=A$
	\end{minipage}
	\mright\},
\end{align*}
where
\begin{align*}
	&\sigma_x(z):=\inf\bigl\{ k \geq 0:S_{\tau(x,\Delta_{An-A})+k}(x)=z \bigr\},\\
	&\rho(z,y):=\inf\mleft\{ \sum_{i=0}^{m-1}\tau(x_i,x_{i+1}):
		\begin{minipage}{18.7em}
			$m \geq 1$ and $x_0,x_1,\dots,x_m \in R_{An-1} \setminus R_{An-A}$\\ with $x_0=z$ and $x_m=y$
		\end{minipage}
	 \mright\},
\end{align*}
which are the time it takes for the frog sitting on $x$ to travel from $\Delta_{An-A}$ to $z$ and the first passage time from $z$ to $y$ which uses only frogs in $R_{An-1} \setminus R_{An-A}$, respectively.
Since $\mathcal{E}_{x,y}$ consists of frogs whose initial positions are in $\{x\} \cup (R_{An} \setminus R_{An-A})$, the events $\{ T(0,x)=\|x\| \}$ and $\mathcal{E}_{x,y}$ are independent.
This together with the union bound shows
\begin{align*}
	Q_3(n)
	\leq \sum_{x \in R_{An-A}} P(T(0,x)=\|x\|) \sum_{y \in R_{An-1} \setminus R_{An-A}} P(\mathcal{E}_{x,y}).
\end{align*}
Assume that for any $x \in R_{An-A}$,
\begin{align}\label{eq:Q3_key}
\begin{split}
	\sum_{y \in R_{An-1} \setminus R_{An-A}} P(\mathcal{E}_{x,y})
	&= P^x\bigl( \tau(x,\Delta_{An-A})=An-A-\|x\| \bigr)\\
	&\quad \times \sum_{y \in R_{A-1} \setminus \{0\}} P\bigl( T(0,y)+\tau(y,\Delta_A)=A \bigr).
\end{split}
\end{align}
Then, due to the independence of $T(0,x)$ and $\tau(x,\Delta_{An-A})$,
\begin{align}\label{eq:Q3_bound}
\begin{split}
	Q_3(n)
	&\leq \sum_{x \in R_{An-A}} P\bigl( T(0,x)+\tau(x,\Delta_{An-A})=An-A \bigr)\\
	&\qquad\qquad\quad \times E\Biggl[ \sum_{y \in R_{A-1} \setminus \{ 0 \}}\1{\{ T(0,y)+\tau(y,\Delta_A)=A \}} \Biggr].
\end{split}
\end{align}
We use the union bound to estimate the above sum as follows:
\begin{align*}
	&\sum_{x \in R_{An-A}} P\bigl( T(0,x)+\tau(x,\Delta_{An-A})=An-A \bigr)\\
	&= q_{An-A}+\sum_{x \in \Delta_{An-A}} P(T(0,x)=An-A)\\
	&\leq q_{An-A}+\sum_{y \in R_{An-A-1}}\sum_{x \in \Delta_{An-A}} P\bigl( T(0,y)+\tau(y,x)=An-A \bigr).
\end{align*}
Since the events $\{ T(0,y)+\tau(y,x)=An-A \}$, $x \in \Delta_{An-A}$, are disjoint, it holds that for all $y \in R_{An-A-1}$,
\begin{align*}
	\sum_{x \in \Delta_{An-A}} P\bigl( T(0,y)+\tau(y,x)=An-A \bigr)
	= P\bigl( T(0,y)+\tau(y,\Delta_{An-A})=An-A \bigr).
\end{align*}
Therefore,
\begin{align*}
	&\sum_{x \in R_{An-A}} P\bigl( T(0,x)+\tau(x,\Delta_{An-A})=An-A \bigr)\\
	&\leq q_{An-A}+\sum_{y \in R_{An-A-1}} P\bigl( T(0,y)+\tau(y,\Delta_{An-A})=An-A \bigr)= 2q_{An-A}.
\end{align*}
This combined with \eqref{eq:Q3_bound} implies that
\begin{align*}
	Q_3(n) \leq 2q_{An-A} \times E\Biggl[ \sum_{y \in R_{A-1} \setminus \{ 0 \}}\1{\{ T(0,y)+\tau(y,\Delta_A)=A \}} \Biggr],
\end{align*}
and \eqref{eq:Q3} follows.

It remains to check the validity of \eqref{eq:Q3_key}.
Note that both $\rho(z,y)$ and $\tau(y,\Delta_{An})$ do not depend on frogs in $R_{An-A} \setminus R_{An-A}$ whenever $y,z \in R_{An-1} \setminus R_{An-A}$.
We use the strong Markov property of $S_\cdot(x)$ to obtain for all $x \in R_{An-A}$ and $y \in R_{An-1} \setminus R_{An-A}$,
\begin{align*}
	P^x(\mathcal{E}_{x,y})
	= \sum_{w \in \Delta_{An-A}} &P^x\bigl( \tau(x,\Delta_{An-A})=An-A-\|x\|,\,S_{\tau(x,\Delta_{An-A})}(x)=w \bigr)\\
	&\times P^w\Bigl( \min_{z \in R_{An-1} \setminus R_{An-A}}(\tau(w,z)+\rho(z,y))+\tau(y,\Delta_{An})=A \Bigr).
\end{align*}
Hence, the independence of frogs yields
\begin{align}\label{eq:markov}
\begin{split}
	&\sum_{y \in R_{An-1} \setminus R_{An-A}} P(\mathcal{E}_{x,y})\\
	&= \sum_{w \in \Delta_{An-A}} P^x\bigl( \tau(x,\Delta_{An-A})=An-A-\|x\|,\,S_{\tau(x,\Delta_{An-A})}(x)=w \bigr)\\
	&\quad \times \sum_{y \in R_{An-1} \setminus R_{An-A}} P\Bigl( \min_{z \in R_{An-1} \setminus R_{An-A}}(\tau(w,z)+\rho(z,y))+\tau(y,\Delta_{An})=A \Bigr).
\end{split}
\end{align}
Furthermore, by the translation invariance of frogs, the last sum can be written as follows:
\begin{align*}
	&\sum_{y \in R_{An-1} \setminus R_{An-A}} P\Bigl( \min_{z \in R_{An-1} \setminus R_{An-A}}(\tau(w,z)+\rho(z,y))+\tau(y,\Delta_{An})=A \Bigr)\\
	&= \sum_{v \in R_{A-1} \setminus \{0\}} P\bigl( T(w,w+v)+\tau(w+v,w+\Delta_A)=A \bigr)\\
	&= \sum_{v \in R_{A-1} \setminus \{0\}} P\bigl( T(0,v)+\tau(v,\Delta_A)=A \bigr).
\end{align*}
Consequently, substituting this into the right side of \eqref{eq:markov} leads to \eqref{eq:Q3_key}.
\end{proof}

\subsection{Proof of Proposition~\ref{prop:delay}}\label{subsect:pf_delay}
The aim of this subsection is to show Proposition~\ref{prop:delay}.
To this end, define for any $L \in \N$ and $y \in \Z^d \setminus \{0\}$,
\begin{align*}
	&\mathcal{E}_1(L,y)
		:= \Biggl\{ T(0,y) \leq C_0\|y\|,\,\sum_{w \in \gamma_\mathcal{O}(y)} \1{\{ T(w,\mathcal{D}_L(w))=L \}}<\frac{\|y\|}{6L} \Biggr\},\\
	&\mathcal{E}_2(L,y)
		:= \Biggl\{ T(0,y) \leq C_0\|y\|,\,\sum_{w \in \gamma_\mathcal{O}(y) \setminus \{y\}} \tau(w,\overline{w}) \1{\{ \tau(w,\overline{w})>L \}}
		< \frac{\|y\|}{L^{d+3}} \Biggr\}.
\end{align*}
The key to proving Proposition~\ref{prop:delay} is to observe that if $L$ and $\|y\|$ are sufficiently large, then the events $\mathcal{E}_1(L,y)$ and $\mathcal{E}_2(L,y)$ have a negligible effect on delaying events.

We first estimate the probability of $\mathcal{E}_1(L,y)$ from above by using Propositions~\ref{prop:CN} and \ref{prop:Nakajima}.

\begin{lem}\label{lem:diamond}
There exist $\Cl[K]{phi:diamond} \in \mathcal{K}(d,r_0)$ and $L_1 \in \N$ (which depends only on $d$ and $r_0$) such that if $L \geq L_1$ and $\|y\| \geq \Cr{CN}(6L)^{d+2}$, then
\begin{align*}
	\sup_{r_0 \leq r \leq 1}\bP_r(\mathcal{E}_1(L,y)^c|\omega(0)=1) \leq \Cr{phi:diamond}(\|y\|).
\end{align*}
\end{lem}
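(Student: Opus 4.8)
The plan is to split $\mathcal{E}_1(L,y)^c$ into the events $\{T(0,y)>C_0\|y\|\}$ and
\[
	\mathcal{F}:=\Bigl\{T(0,y)\le C_0\|y\|,\ \sum_{w\in\gamma_\mathcal{O}(y)}\1{\{T(w,\mathcal{D}_L(w))=L\}}\ge\tfrac{\|y\|}{6L}\Bigr\}.
\]
Since $T(0,y)\le T_z^0(0,y)$ on $\{\omega(0)=1\}$ for any fixed $z\in\Z^d\setminus\{0,y\}$, Lemma~\ref{lem:AP} immediately bounds $\bP_r(T(0,y)>C_0\|y\|\mid\omega(0)=1)$ by $\phi_0(C_0\|y\|)$, which is of $\mathcal{K}(d,r_0)$-type, so the real task is to estimate $\bP_r(\mathcal{F}\mid\omega(0)=1)$.

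To do this I would introduce the indicator family $X(w):=\1{\{T(w,\mathcal{D}_L(w))=L\}}$ for $w\in\Z^d$ and record two facts. First, $\{T(w,\mathcal{D}_L(w))=L\}$ is increasing in $\omega$, and since $T(w,\mathcal{D}_L(w))\ge L$ always, equality forces every optimal path to be $\ell^1$-monotone and to be traversed by minimal-length walks; hence $X(w)$ is measurable with respect to $\omega$ and $\S$ restricted to $B(w,L)$ and to times $\le L$, so $(X(w))_{w\in\Z^d}$ is $2L$-dependent, and remains so under $\bP_r(\cdot\mid\omega(0)=1)$ because conditioning on $\omega(0)=1$ leaves a product measure. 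Second, by the monotone coupling of \cite[Lemma~2.2]{Kub20}, translation invariance, and Proposition~\ref{prop:Nakajima},
\[
	p:=\sup_{w\in\Z^d}\bP_r(X(w)=1\mid\omega(0)=1)\le r_0^{-1}P\bigl(T(0,\mathcal{D}_L(0))=L\bigr)\le r_0^{-1}e^{-\Cr{Nakajima}L}
\]
once $L$ is large.

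Next, on $\{T(0,y)\le C_0\|y\|\}$ one has $\gamma_\mathcal{O}(y)\in\Pi_{C_0\|y\|}$, because the $\ell^1$-lengths of consecutive steps along $\gamma_\mathcal{O}(y)$ sum to at most $T(0,y)$; hence there $\sum_{w\in\gamma_\mathcal{O}(y)}X(w)\le\max_{\pi\in\Pi_{C_0\|y\|}}\sum_{w\in\pi}X(w)$, and it remains to bound $\bP_r\bigl(\max_{\pi\in\Pi_{C_0\|y\|}}\sum_{w\in\pi}X(w)\ge\|y\|/(6L)\bigm|\omega(0)=1\bigr)$. I would apply Proposition~\ref{prop:CN} with dependence range $2L$, $M=C_0\|y\|$ and $t=\|y\|/(6L)$: choosing $L_1=L_1(d,r_0)$ large enough makes $p\le(6L+1)^{-d}$ and $p^{1/d}\le\bigl(6L\,\Cr{CN}(2L)^{d+1}C_0\bigr)^{-1}$ (exponential decay in $L$ beats the polynomial factors), while the hypothesis $\|y\|\ge\Cr{CN}(6L)^{d+2}$ gives $t\ge\Cr{CN}(2L)^{d+1}$; thus all hypotheses of Proposition~\ref{prop:CN} hold and the probability is at most $2^d\exp\{-\|y\|/(6L(32L)^d)\}$. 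Finally, $\|y\|\ge\Cr{CN}(6L)^{d+2}$ also forces $L\le c\|y\|^{1/(d+2)}$, hence $6L(32L)^d\le c'\|y\|^{(d+1)/(d+2)}$, so this bound is in turn at most $2^d\exp\{-c\|y\|^{1/(d+2)}\}$ with $c,c'$ depending only on $d$; combining with the $\{T(0,y)>C_0\|y\|\}$ contribution and dominating the sum of the two stretched exponentials by a single one yields the desired $\Cr{phi:diamond}\in\mathcal{K}(d,r_0)$.

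The step I expect to be the main obstacle is the locality assertion in the first of the two facts above: checking carefully that $\{T(w,\mathcal{D}_L(w))=L\}$ depends only on the window $B(w,L)$ up to time $L$, so that $(X(w))_w$ is genuinely finitely dependent and Proposition~\ref{prop:CN} can be invoked. The remaining steps are a fairly mechanical matching of the numerical hypotheses of Propositions~\ref{prop:CN} and \ref{prop:Nakajima} together with the conversion of the $L$-dependent exponential rate into a stretched exponential in $\|y\|$ via $\|y\|\ge\Cr{CN}(6L)^{d+2}$.
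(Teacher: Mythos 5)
Your proposal is correct and follows essentially the same route as the paper: the same splitting of $\mathcal{E}_1(L,y)^c$ via Lemma~\ref{lem:AP}, the same $2L$-dependent family $\1{\{T(w,\mathcal{D}_L(w))=L\}}$ with $p$ controlled by monotonicity in $r$ and Proposition~\ref{prop:Nakajima}, and the same application of Proposition~\ref{prop:CN} with $M=C_0\|y\|$, $t=\|y\|/(6L)$, converted into a stretched exponential in $\|y\|$ through $\|y\|\ge\Cr{CN}(6L)^{d+2}$. The only cosmetic difference is that you keep the conditioning on $\omega(0)=1$ inside the application of Proposition~\ref{prop:CN} (paying $r_0^{-1}$ in $p$), whereas the paper pulls it out as a prefactor $r_0^{-1}$ on the unconditional probability; your locality argument for the $2L$-dependence is the justification the paper leaves implicit.
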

\begin{proof}
Fix $r \in [r_0,1]$.
In addition, take $L_1 \in \N$ large enough to satisfy that for all $L \geq L_1$,
\begin{align*}
	e^{-\Cr{Nakajima}L} \leq C_0^{-d}\Cr{CN}^{-d}(6L+1)^{-d(d+2)}.
\end{align*}
We now consider the following family $(X_L(w))_{w \in \Z^d}$ of $2L$-dependent random variables and parameter $p_L$:
\begin{align*}
	X_L(w):=\1{\{ T(w,\mathcal{D}_L(w))=L \}},\qquad w \in \Z^d,
\end{align*}
and
\begin{align*}
	p_L:=\sup_{w \in \Z^d}\bP_r(X_L(w)=1)=\bP_r(X_L(0)=1).
\end{align*}
Lemma~\ref{lem:AP} implies that for all $y \in \Z^d \setminus \{0\}$,
\begin{align}\label{eq:diamond}
\begin{split}
	&\bP_r(\mathcal{E}_1(L,y)^c|\omega(0)=1)\\
	&\leq \phi_0(C_0\|y\|)
		+r_0^{-1}\bP_r\biggl( \max_{\pi \in \Pi_{C_0\|y\|}} \sum_{w \in \pi}X_L(w)
		\geq \frac{\|y\|}{6L} \biggr).
\end{split}
\end{align}
Note that the choice of $L_1$ and Proposition~\ref{prop:Nakajima} guarantee that if $L \geq L_1$ and $\|y\| \geq \Cr{CN}(6L)^{d+2}$, then
\begin{align*}
	p_L \leq e^{-\Cr{Nakajima}L} \leq (6L+1)^{-d}
\end{align*}
and
\begin{align*}
	\frac{\|y\|}{6L}
	\geq \Cr{CN}(2L)^{d+1}\max\{1,C_0\|y\|p_L^{1/d} \}.
\end{align*}
Hence, from Proposition~\ref{prop:CN}, there exists a constant $c$ (which depends only on $d$) such that
\begin{align*}
	\bP_r\biggl( \max_{\pi \in \Pi_{C_0\|y\|}} \sum_{w \in \pi}X_L(w) \geq \frac{\|y\|}{6L} \biggr)
	&\leq 2^d\exp\mleft\{ -\frac{\|y\|}{(32L)^{d+1}} \mright\}\\
	&\leq 2^d\exp\bigl\{ -c\|y\|^{1/(d+2)} \bigr\}.
\end{align*}
This together with \eqref{eq:diamond} implies that if $L \geq L_1$ and $\|y\| \geq \Cr{CN}(6L)^{d+2}$, then
\begin{align*}
	\bP_r(\mathcal{E}_1(L,y)^c|\omega(0)=1)
	\leq \phi_0(C_0\|y\|)+2^dr_0^{-1}\exp\bigl\{ -C\|y\|^{1/(d+2)} \bigr\}.
\end{align*}
It is clear that there exists $\Cr{phi:diamond} \in \mathcal{K}(d,r_0)$ such that for all $t \geq 0$,
\begin{align*}
	\phi_0(C_0t)+2^dr_0^{-1}\exp\bigl\{ -ct^{1/(d+2)} \bigr\}
	\leq \Cr{phi:diamond}(t),
\end{align*}
and the lemma follows.
\end{proof}

Let us next estimate the probability of $\mathcal{E}_2(L,y)$ from above by using Proposition~\ref{prop:CN} again.

\begin{lem}\label{lem:wandering}
There exist $\Cl[K]{phi:wandering} \in \mathcal{K}(d,r_0)$ and $L_2 \in \N$ (which depends only on $d$ and $r_0$) such that if $L \geq L_2$ and $\|y\|>(L+1)^{4d+10}$, then
\begin{align*}
	\sup_{r_0 \leq r \leq 1}\bP_r(\mathcal{E}_2(L,y)^c|\omega(0)=1) \leq \Cr{phi:wandering}(\|y\|).
\end{align*}
\end{lem}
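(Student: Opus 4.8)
The plan is to follow the template of the proof of Lemma~\ref{lem:longwander}. First I would discard the event $\{T(0,y)>C_0\|y\|\}$, whose conditional probability is at most $\phi_0(C_0\|y\|)$ by Lemma~\ref{lem:AP}, and work on $\{T(0,y)\le C_0\|y\|\}$. On this event $\gamma_\mathcal{O}(y)$ is contained in $B(0,C_0\|y\|)$ and lies in $\Pi_{C_0\|y\|}$, each $\tau(w,\overline{w})$ is at most $T(0,y)\le C_0\|y\|$, and, since consecutive points of an optimal path are joined by a single frog, the triangle inequality forces $\tau(w,\overline{w})=T(w,\overline{w})$ on every edge of $\gamma_\mathcal{O}(y)$. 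Regrouping the sum defining $\mathcal{E}_2(L,y)$ according to the value $\ell=\tau(w,\overline{w})$ then gives, on $\{T(0,y)\le C_0\|y\|\}$,
\begin{align*}
	\sum_{w\in\gamma_\mathcal{O}(y)\setminus\{y\}}\tau(w,\overline{w})\1{\{\tau(w,\overline{w})>L\}}
	=\sum_{L<\ell\le C_0\|y\|}\ell\sum_{w\in\gamma_\mathcal{O}(y)\setminus\{y\}}\1{\{\tau(w,\overline{w})=\ell\}}.
\end{align*}

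Next I would dominate the inner counts by a finitely dependent family. For $\ell\in\N$ put
\begin{align*}
	X_\ell(w):=\1{\{\,\exists\,v\in\Z^d\setminus\{0\}\ \text{with}\ T(w,w+v)=\tau(w,w+v)=\ell\,\}},\qquad w\in\Z^d.
\end{align*}
Because $\tau(w,w+v)=\ell$ forces $\|v\|\le\ell$, and because any route from $w$ of total cost $<\ell$ stays inside $B(w,\ell)$ and uses only walk-segments of length $<\ell$, the variable $X_\ell(w)$ depends only on the frogs and walk-segments of length $\le\ell$ inside $B(w,2\ell)$; hence $(X_\ell(w))_{w\in\Z^d}$ is $4\ell$-dependent. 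By translation invariance and the first assertion of Lemma~\ref{lem:next}, $p_\ell:=\sup_{w}\bP_r(X_\ell(w)=1)\le(2\ell+1)^d\Cr{phi:next}(\ell)$. Since a step $w\to\overline{w}$ of $\gamma_\mathcal{O}(y)$ satisfies $T(w,\overline{w})=\tau(w,\overline{w})$ with $\|w-\overline{w}\|\le\tau(w,\overline{w})$, we have $\1{\{\tau(w,\overline{w})=\ell\}}\le X_\ell(w)$; and as $\gamma_\mathcal{O}(y)\in\Pi_{C_0\|y\|}$ on our event, the right-hand side of the last display is at most $\sum_{L<\ell\le C_0\|y\|}\ell\max_{\pi\in\Pi_{C_0\|y\|}}\sum_{w\in\pi}X_\ell(w)$.

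Now I would split the range of $\ell$ at $L_\ast:=\lceil\|y\|^{1/(4d+10)}\rceil$; the hypothesis $\|y\|>(L+1)^{4d+10}$ ensures $L<L_\ast$ and, together with $L\ge L_2$, that $\|y\|$ exceeds any fixed threshold depending on $d,r_0$. For $\ell>L_\ast$ the union bound shows that the probability that $X_\ell(w)=1$ for some $w\in B(0,C_0\|y\|)$ and some such $\ell$ is at most $(2C_0\|y\|+1)^d\sum_{\ell>L_\ast}p_\ell$, a $\mathcal{K}(d,r_0)$-function of $\|y\|$ because $\Cr{phi:next}\in\mathcal{K}(d,r_0)$; off this event the part of the sum with $\ell>L_\ast$ vanishes. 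For $L<\ell\le L_\ast$, since $\sum_{\ell>L}\ell^{-(d+5)}$ is bounded by a constant depending only on $d$, it suffices to prove $\max_{\pi\in\Pi_{C_0\|y\|}}\sum_{w\in\pi}X_\ell(w)\le\|y\|/\ell^{d+6}$ for each such $\ell$, for then the remaining sum is $<\|y\|/L^{d+3}$, which is what $\mathcal{E}_2(L,y)$ asks. This last bound I would get from Proposition~\ref{prop:CN} applied to the $4\ell$-dependent family, with $M=C_0\|y\|$ and $t=\|y\|/\ell^{d+6}$, after checking the two hypotheses $p_\ell\le(12\ell+1)^{-d}$ and $t\ge\Cr{CN}(4\ell)^{d+1}\max\{1,Mp_\ell^{1/d}\}$; the resulting estimate $2^d\exp\{-\|y\|/(64^d\ell^{2d+6})\}$, summed over $\ell\le L_\ast$, is again a $\mathcal{K}(d,r_0)$-function of $\|y\|$. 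Collecting the three contributions and dominating their sum by a single stretched-exponential function yields $\Cr{phi:wandering}$ and $L_2$ (the passage from $\bP_r(\,\cdot\,)$ to $\bP_r(\,\cdot\mid\omega(0)=1)$ costs only a factor $r_0^{-1}$).

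The main obstacle is verifying the second hypothesis of Proposition~\ref{prop:CN} uniformly over $L<\ell\le L_\ast$. When $Mp_\ell^{1/d}\le1$ it reduces to $\|y\|\ge\Cr{CN}4^{d+1}\ell^{2d+7}$, which is precisely what pins down the exponent in $L_\ast$ and hence the form $(L+1)^{4d+10}$ of the hypothesis; when $Mp_\ell^{1/d}>1$ it reduces to $\Cr{phi:next}(\ell)^{1/d}\lesssim\ell^{-(2d+8)}$, which holds for $\ell\ge L_2$ because every stretched-exponential function decays faster than any power, and this is where $L_2$ gets fixed. A secondary subtlety is that $\ell$ can a priori be as large as $C_0\|y\|$, not merely a fixed power of $\|y\|$; this is why a crude union bound is used on $(L_\ast,C_0\|y\|]$ rather than invoking the second assertion of Lemma~\ref{lem:next}, and the exponent $1/(4d+10)$ must be chosen so that this crude bound and the Proposition~\ref{prop:CN} estimate on $(L,L_\ast]$ are simultaneously stretched-exponentially small in $\|y\|$.
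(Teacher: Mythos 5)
Your proposal is correct and follows essentially the same route as the paper: discard $\{T(0,y)>C_0\|y\|\}$ via Lemma~\ref{lem:AP}, remove edges longer than $\|y\|^{1/(4d+10)}$ by a crude union bound with the single-edge estimate of Lemma~\ref{lem:next}, and control the intermediate scales $L<\ell\lesssim\|y\|^{1/(4d+10)}$ by applying Proposition~\ref{prop:CN} to the finitely dependent indicator fields $X_\ell$, exactly as in the paper's adaptation of the Lemma~\ref{lem:longwander} argument. The only differences (claiming $4\ell$- rather than $2\ell$-dependence, the threshold $\|y\|/\ell^{d+6}$ instead of $(d+3)\|y\|/\ell^{d+5}$, and indexing the large-$\ell$ union bound by $(w,\ell)$ rather than by pairs of endpoints) are cosmetic and do not affect the validity of the argument.
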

\begin{proof}
Fix $r \in [r_0,1]$ and set $\delta:=1/(4d+10)$.
We define for any $L \in \N$ and $y \in \Z^d$ with $\|y\|^\delta \geq L+2$,
\begin{align*}
	\mathcal{V}(L,y)
	:=\Biggl\{ T(0,y) \leq C_0\|y\|,\,
	\sum_{w \in \gamma_\mathcal{O}(y) \setminus \{y\}} \tau(w,\bar{w})\1{\{ L<\tau(w,\bar{w})<\|y\|^\delta \}}
	\geq \frac{\|y\|}{L^{d+3}} \Biggr\}.
\end{align*}
From Lemmata~\ref{lem:AP} and \ref{lem:next} and the fact that $\tau(w,\bar{w})=T(w,\bar{w})$ when $w \in \gamma_\mathcal{O}(y) \setminus \{y\}$,
\begin{align}\label{eq:E2}
\begin{split}
	&\bP_r(\mathcal{E}_2(L,y)^c|\omega(0)=1)\\
	&\leq \phi_0(\|y\|)+(2C_0\|y\|+1)^{2d}\Cr{phi:next}(\|y\|^\delta)+\bP_r(\mathcal{V}(L,y)|\omega(0)=1).
\end{split}
\end{align}
To estimate the last probability, for each $\ell \in \N$, we consider the following family $(X_\ell(z))_{z \in \Z^d}$ of $2\ell$-dependent random variables and parameter $p_\ell$:
\begin{align*}
	X_\ell(w):=\1{\{ \exists w' \in B(w,\ell) \setminus \{w\} \text{ such that } T(w,w')=\tau(w,w')=\ell \}},\qquad w \in \Z^d,
\end{align*}
and
\begin{align*}
	p_\ell:=\sup_{w \in \Z^d}\bP_r(X_\ell(w)=1)=\bP_r(X_\ell(0)=1).
\end{align*}
Then, one has
\begin{align*}
	\sum_{w \in \gamma_\mathcal{O}(y) \setminus \{y\}} \tau(w,\bar{w})\1{\{ L<\tau(w,\bar{w})<\|y\|^\delta \}}
	&= \sum_{L<\ell<\|y\|^\delta} \ell \sum_{w \in \gamma_\mathcal{O}(y) \setminus \{y\}}\1{\{ \tau(w,\bar{w})=\ell \}}\\
	&\leq \sum_{L<\ell<\|y\|^\delta} \ell \sum_{w \in \gamma_\mathcal{O}(y) \setminus \{y\}} X_\ell(w).
\end{align*}
Hence, the same argument as in the proof of Lemma~\ref{lem:longwander} shows that for all $L \in \N$ and $y \in \Z^d$ with $\|y\|^\delta \geq L+2$,
\begin{align}\label{eq:E2'}
\begin{split}
	&\bP_r(\mathcal{V}(L,y)|\omega(0)=1)\\
	&\leq r_0^{-1}\sum_{L<\ell<\|y\|^\delta}
		\bP_r\biggl( \max_{\pi \in \Pi_{C_0\|y\|}} \sum_{w \in \pi} X_\ell(w) \geq \frac{d+3}{\ell^{d+5}}\|y\| \biggr).
\end{split}
\end{align}
We take $L_2 \in \N$ large enough to satisfy that $L_2 \geq 2^{d+1}\Cr{CN}$ and
\begin{align*}
	\Cr{phi:next}(t) \leq C_0^{-d}\Cr{CN}^{-d}(2t+1)^{-d(2d+7)},\qquad t \geq L_2.
\end{align*}
Lemma~\ref{lem:next} guarantees that if $L \geq L_2$ and $\|y\| \geq (L+2)^{1/\delta}$, then for all $\ell \in (L,\|y\|^\delta)$,
\begin{align*}
	p_\ell
	&\leq \sum_{w' \in B(0,\ell) \setminus \{0\}} \bP_r(T(0,w')=\tau(0,w')=\ell|\omega(0)=1)\\
	&\leq (2\ell+1)^d \Cr{phi:next}(\ell)
		\leq (6\ell+1)^{-d}
\end{align*}
and
\begin{align*}
	\frac{d+3}{\ell^{d+5}}\|y\| \geq \Cr{CN}(2\ell)^{d+1}\max\{1,C_0\|y\|p_\ell^{1/d} \}.
\end{align*}
This allows us to use Proposition~\ref{prop:CN}, and we have for some constant $c$ (which depends only on $d$),
\begin{align}\label{eq:E2'_LA}
\begin{split}
	&\bP_r\biggl( \max_{\pi \in \Pi_{C_0\|y\|}} \sum_{w \in \pi} X_\ell(w) \geq \frac{d+3}{\ell^{d+5}}\|y\| \biggr)\\
	&\leq 2^d\exp\mleft\{ -\frac{d+3}{32^d\ell^{2d+5}}\|y\| \mright\}
		\leq 2^d\exp\bigl\{ -c\|y\|^{1/2} \bigr\}.
\end{split}
\end{align}
Therefore, by \eqref{eq:E2}, \eqref{eq:E2'} and \eqref{eq:E2'_LA}, if $L \geq L_2$ and $\|y\| \geq (L+2)^{1/\delta}$, then
\begin{align*}
	&\bP_r(\mathcal{E}_2(L,y)^c|\omega(0)=1)\\
	&\leq \phi_0(\|y\|)+(2C_0\|y\|+1)^{2d}\Cr{phi:next}(\|y\|^\delta)+2^dr_0^{-1}\|y\|^\delta\exp\bigl\{ -c\|y\|^{1/2} \bigr\}.
\end{align*}
It is clear that there exists $\Cr{phi:wandering} \in \mathcal{K}(d,r_0)$ such that for all $t \geq 0$,
\begin{align*}
	\phi_0(t)+(2C_0t+1)^{2d}\Cr{phi:next}(t^\delta)+2^dr_0^{-1}t^\delta\exp\bigl\{ -ct^{1/2} \bigr\}
	\leq \Cr{phi:wandering}(t),
\end{align*}
and the lemma follows.
\end{proof}

We are now in a position to prove Proposition~\ref{prop:delay}.

\begin{proof}[\bf Proof of Proposition~\ref{prop:delay}]
Recall that for any $L \in \N$ and $y,z \in \Z^d \setminus \{0\}$,
\begin{align*}
	&\mathcal{E}(L,y,z)
		= \mleft\{ z \in \gamma_\mathcal{O}(y),\,T(z,\mathcal{D}_L(z))>L,\,\|\overline{z}^{(L)}-z\| \leq 2L \mright\},\\
	&\mathcal{E}_1(L,y)
		= \Biggl\{ T(0,y) \leq C_0\|y\|,\,\sum_{w \in \gamma_\mathcal{O}(y)} \1{\{ T(w,\mathcal{D}_L(w))=L \}}<\frac{\|y\|}{6L} \Biggr\},\\
	&\mathcal{E}_2(L,y)
		= \Biggl\{ T(0,y) \leq C_0\|y\|,\,\sum_{w \in \gamma_\mathcal{O}(y) \setminus \{y\}} \tau(w,\overline{w}) \1{\{ \tau(w,\overline{w})>L \}}
		< \frac{\|y\|}{L^{d+3}} \Biggr\},
\end{align*}
where $\overline{z}^{(L)}$ is the first point $w$ after $z$ along $\gamma_\mathcal{O}(y)$ satisfying that $\|w-z\|>L$ if such a point $w$ exists; otherwise set $\overline{z}^{(L)}:=z+3L\xi_1$ (see at the beginning of Section~\ref{subsect:influ_lower}).
Set $L:=\max\{ L_1,L_2,3^d\}$.
Assume that for all $y \in \Z^d$ with $\|y\| \geq \Cr{CN}(6L)^{4d+10}$ and $N \geq C_0\|y\|$, on the event $\mathcal{E}_1(L,y) \cap \mathcal{E}_2(L,y) \cap \{ \omega(0)=1 \}$,
\begin{align}\label{eq:lower_delay}
	\sum_{z \in B(0,N) \setminus \{0,y\}} \1{\mathcal{E}(L,y,z)}
	\geq \frac{\|y\|}{2L}.
\end{align}
Lemmata~\ref{lem:diamond} and \ref{lem:wandering} imply that if $\|y\|$ is sufficiently large and $N \geq C_0\|y\|$, then for any $r \in [r_0,1]$,
\begin{align*}
	&\sum_{z \in B(0,N) \setminus \{0,y\}} \bP_r(\mathcal{E}(L,y,z)|\omega(0)=1)\\
	&\geq \bE_r \Biggl[ \1{\mathcal{E}_1(L,y) \cap \mathcal{E}_2(L,y)}
		\sum_{z \in B(0,N) \setminus \{0,y\}} \1{\mathcal{E}(L,y,z)} \Bigg| \omega(0)=1 \Biggr]\\
	&\geq \frac{\|y\|}{2L}\{ 1-\Cr{phi:diamond}(\|y\|)-\Cr{phi:wandering}(\|y\|) \}
		\geq \frac{\|y\|}{4L},
\end{align*}
and the proposition follows.

It remains to show that for all $y \in \Z^d$ with $\|y\| \geq \Cr{CN}(6L)^{4d+10}$ and $N \geq C_0\|y\|$, \eqref{eq:lower_delay} holds on $\mathcal{E}_1(L,y) \cap \mathcal{E}_2(L,y) \cap \{ \omega(0)=1 \}$.
To this end, assume that $\mathcal{E}_1(L,y) \cap \mathcal{E}_2(L,y) \cap \{ \omega(0)=1 \}$ occurs.
We first estimate the number of frogs in $\gamma_\mathcal{O}(y)$. 
Since the sum of all $\tau(w,\bar{w})$ with $w \in \gamma_\mathcal{O}(y) \setminus \{y\}$ and $\tau(w,\bar{w})>L$ does not exceed $\|y\|/L^{d+3}$, one has
\begin{align*}
	\sum_{w \in \gamma_\mathcal{O}(y) \setminus \{y\}}\tau(w,\bar{w})\1{\{ \tau(w,\bar{w}) \leq L \}}
	\geq T(0,y)-\frac{\|y\|}{L^{d+3}}
	= \Bigl( 1-\frac{1}{L^{d+3}} \Bigr)\|y\|.
\end{align*}
This means that
\begin{align*}
	\#\{ w \in \gamma_\mathcal{O}(y) \setminus \{0,y\}:\tau(w,\bar{w}) \leq L \}
	\geq \frac{1}{L}\Bigl( 1-\frac{1}{L^{d+3}} \Bigr)\|y\|-1.
\end{align*}
In particular, we have
\begin{align}\label{eq:count1}
	\#(\gamma_\mathcal{O}(y) \setminus \{0,y\}) \geq \frac{1}{L}\Bigl( 1-\frac{1}{L^{d+3}} \Bigr)\|y\|-1.
\end{align}

Let us move on to the proof of \eqref{eq:lower_delay}.
The (random) subset $\Gamma(L,y)$ of $\Z^d$ is defined by
\begin{align*}
	\Gamma(L,y):=
	\mleft\{ z \in \gamma_\mathcal{O}(y) \setminus \{0,y\}:
		\begin{minipage}{18em}
			$y \not\in B(z,L)$ and $B(z,L)$ does not contain\\
			any $w \in \gamma_\mathcal{O}(y) \setminus \{y\}$ with $\tau(w,\bar{w})>L$
		\end{minipage}
		\mright\}.
\end{align*}
Use the fact that the sum of all $\tau(w,\bar{w})$ with $w \in \gamma_\mathcal{O}(y) \setminus \{y\}$ and $\tau(w,\bar{w})>L$ does not exceed $\|y\|/L^{d+3}$ again, and it holds that
\begin{align*}
	&\sum_{z \in \gamma_\mathcal{O}(y) \setminus \{y\}}
		\1{\{ \exists w \in \gamma_\mathcal{O}(y) \setminus \{y\} \text{ such that } w \in B(z,L) \text{ and } \tau(w,\bar{w})>L \}}\\
	&\leq \sum_{w \in \gamma_\mathcal{O}(y) \setminus \{y\}} \1{\{ \tau(w,\bar{w})>L \}}
		\sum_{z \in \gamma_\mathcal{O}(y) \setminus \{y\}} \1{\{ z \in B(w,L) \}}\\
	&\leq (2L+1)^d \times \frac{\|y\|}{L^{d+3}}
		\leq \frac{\|y\|}{L^2}.
\end{align*}
Hence, by \eqref{eq:count1},
\begin{align*}
	\#\Gamma(L,y)
	&\geq \frac{1}{L}\Bigl( 1-\frac{1}{L^{d+3}} \Bigr)\|y\|-2(2L+1)^d-\frac{\|y\|}{L^2}\\
	&\geq \frac{1}{L}\Bigl( 1-\frac{3}{L} \Bigr)\|y\|
		\geq \frac{2}{3L}\|y\|.
\end{align*}
This, together with the fact that $\|z-\bar{z}^{(L)}\| \leq 2L$ whenever $z \in \Gamma(L,y)$, shows that
\begin{align*}
	\sum_{z \in B(0,N) \setminus \{0,y\}} \1{\mathcal{E}(L,y,z)}
	&\geq \#\Gamma(L,y)-\#\bigl\{ z \in \gamma_\mathcal{O}(y):T(z,\mathcal{D}_L(z))=L \bigr\}\\
	&\geq \frac{2}{3L}\|y\|-\frac{\|y\|}{6L}=\frac{\|y\|}{2L},
\end{align*}
which implies \eqref{eq:lower_delay}.
\end{proof}


\section*{Acknowledgements}
V.~H.~Can is supported by the International Center for Research and Postgraduate Training in Mathematics, Institute of Mathematics, Vietnam Academy of Science and Technology under grant number ICRTM01\_2024.01.
N.~Kubota is supported by JSPS KAKENHI Grant Number JP20K14332.
S.~Nakajima is supported by JSPS KAKENHI Grant Number JP22K20344.



\end{document}